 \newtheorem{theorem}{Theorem}[section]
 \newtheorem{corollary}[theorem]{Corollary}
 \newtheorem{lemma}[theorem]{Lemma}
 \newtheorem{proposition}[theorem]{Proposition}
 \theoremstyle{definition}
 \newtheorem{definition}[theorem]{Definition}
 \theoremstyle{remark}
 \newtheorem{remark}[theorem]{Remark}
 \numberwithin{equation}{section}
 \newtheorem{problem}[theorem]{Problem}
\def\rr{{\mathbb R}}
\def\rn{{{\rr}^n}}
\def\D{\mathcal{D}}
\def\d{\mathrm{d}}
\def\H{\mathcal{H}}
\def\M{\mathcal{M}}
\def\sgn{\mathrm{sgn}}
\begin{document}

%
%
%
%
%
%
%
%
%

\title[The preduals of  Banach space valued Bourgain-Morrey  spaces]
 {The preduals of  Banach space valued Bourgain-Morrey  spaces}

\author[Tengfei Bai]{Tengfei Bai}

\address{%
Longkunnan 99,
 College of Mathematics and Statistics, Hainan Normal University, Haikou, Hainan 571158,
China
}

\email{202311070100007@hainnu.edu.cn}

\thanks{The corresponding author
	Jingshi Xu is supported by the National Natural Science Foundation of China (Grant No. 12161022) and the Science and Technology Project of Guangxi (Guike AD23023002).
	Pengfei Guo is supported by Hainan Provincial Natural Science Foundation of China (Grant No. 122RC652).}
\author{Pengfei Guo}
\address{Longkunnan 99,
	College of Mathematics and Statistics, Hainan Normal University, Haikou, Hainan 571158,
	China}
\email{050116@hainnu.edu.cn}

\author{Jingshi Xu}
\address{Jinji 1,
	School of Mathematics and Computing Science, Guilin University of Electronic Technology, Guilin 541004, China\\
Center for Applied Mathematics of Guangxi (GUET), Guilin 541004, China\\
Guangxi Colleges and Universities Key Laboratory of Data Analysis and Computation, Guilin 541004, China
}
\email{jingshixu@126.com}

\subjclass{Primary 42B25; Secondary  42B35, 46E30}

\keywords{Banach space, Bourgain-Morrey space,  Hardy-Littlewood maximal function,   predual, reflexivity}

\date{January 1, 2004}

\begin{abstract}
	Let $X$ be a Banach space  such that there exists a Banach space $^\ast X$ satisfying $  ( ^\ast X )^ \ast = X $. In this paper, we introduce $X$-valued  Bourgain-Morrey spaces. We show that $^\ast X$-valued block spaces  are the predual of $X$-valued  Bourgain-Morrey spaces. We obtain the completeness, denseness  and Fatou property of $^\ast X$-valued block spaces. We give a description of the dual of $X$-valued Bourgain-Morrey spaces and  conclude the reflexivity of these spaces. The boundedness of powered Hardy-Littlewood maximal operator in vector valued block spaces is obtained.
\end{abstract}

\maketitle
\section{Introduction}
Bourgain \cite{Bou91} introduced a special case of Bourgain-Morrey spaces to study the Stein-Tomas (Strichartz) estimate.
After then, many mathematicians began to study the Bourgain-Morrey spaces. For example,
in \cite{M16}, Masaki gave the block spaces which are the  preduals of Bourgain-Morrey spaces $M_{p}^{t,r}$ (Definition \ref{def:Bourgain Morrey space}) when $ 1\le p \le t \le r \le \infty $ and $t>1$. 
In \cite{HNSH23}, Hatano, Nogayama,  Sawano, and  Hakim researched the Bourgain-Morrey spaces from the viewpoints of harmonic analysis and functional analysis.   Specifically, they obtained that the block spaces are the  dual spaces of Bourgain-Morrey spaces  for $1<p<t<r<\infty$. After then, some function spaces extending Bourgain-Morrey spaces were established.
In \cite{ZSTYY23}, Zhao et al.  introduced Besov-Bourgain-Morrey spaces which connect Bourgain-Morrey spaces with amalgam-type spaces.  They obtained predual, dual spaces and complex interpolation  of these spaces. They also gave an equivalent norm with an integral expression  and  obtained the boundedness on these spaces
of the Hardy-Littlewood maximal operator, the fractional integral, and the Calder\'on-Zygmund operator. 

Immediately after \cite{ZSTYY23},
Hu, Li, and Yang  introduced  Triebel-Lizorkin-Bourgain-Morrey spaces which connect Bourgain-Morrey spaces and global Morrey spaces in \cite{HLY23}. 
They considered the embedding relations between Triebel-Lizorkin-Bourgain-Morrey spaces and Besov-Bourgain-Morrey spaces.  
They studied various fundamental real-variable properties of these spaces. They obtained the sharp boundedness of  the Hardy-Littlewood maximal operator, the Calder\'on-Zygmund operator, and the fractional integral on these spaces.

In \cite{H23}, Ho introduced and studied  grand Morrey spaces and grand Hardy-Morrey spaces on $\rn$. He defined the small block space and obtained a duality result between grand Morrey spaces and small block spaces. The boundedness of the Hardy-Littlewood maximal operator on small block spaces was also obtained. He extended the Rubio de Francia extrapolation theory to grand Morrey spaces and obtained the boundedness of the Calder\'on-Zygmund operators and the parametric Marcinkiewicz integrals on the grand Hardy-Morrey spaces.

Inspired by the generalized grand Morrey spaces and Besov-Bourgain-Morrey spaces, Zhang et al. introduced generalized grand Besov-Bourgain-Morrey spaces in \cite{ZYZ24}. They obtained predual spaces and the Gagliardo-Peetre  interpolation theorem,  extrapolation theorem.  The boundedness of Hardy-Littlewood maximal operator, the fractional integral and the Calder\'on-Zygmund operator on generalized grand Besov-Bourgain-Morrey spaces was also proved. 

The first author and the third author of this paper,  introduced the weighted Bourgain-Morrey-Besov-Triebel-Lizorkin spaces associated with operators over a  space of homogeneous type in \cite{BX25} and  obtained two sufficient conditions for precompact sets in matrix weighted Bourgain-Morrey spaces in \cite{BX24}.

The preduals  of Morrey spaces have been researched by many authors. 
In 1986, Zorko  \cite{Z86} proved that Morrey spaces have preduals.
In \cite{CH14}, Cheung and Ho proved the boundedness of the Hardy-Littlewood maximal operator on the block spaces with variable exponent, which are the preduals of the  Morrey space with variable exponent. 
In \cite{IST15}, Izumi et al. obtained  the Littlewood-Paley characterization of Morrey spaces and their preduals.
In \cite{ST15},  Sawano and Tanaka   obtained the Fatou property in the preduals of   Morrey  spaces. 
In \cite{RS16}, Rosenthal and Schmeisser showed the preduals of the vector-valued Morrey spaces. Furthermore,  the Calder\'on-Zygmund operators,   Hardy-Littlewood maximal operators, Fourier multipliers such as characteristic functions, smooth functions, strongly singular operators and Bochner-Riesz multipliers with critical indices are bounded in vector-valued Morrey-type spaces.
In \cite{H18}, Hakim  obtained the interpolation properties of the preduals  of generalized local Morrey spaces.
In  \cite{AS23}, Abe and Sawano  introduced the discrete Morrey spaces in terms of dyadic cubes and obtained Littlewood-Paley characterization of these spaces. They showed  that a predual of discrete Morrey spaces is the block space. As an application, they obtained the boundedness of martingale transforms, and the Riesz potential on discrete Morrey spaces. 
In \cite{W23}, Wei showed the preduals of Herz-Morrey spaces and the boundedness of Hardy-Littlewood maximal function. Using this, the extrapolation theory for Herz-Morrey spaces is established.
In \cite{W24}, Wei  obtained the boundedness of the strong maximal operator on weighted product block spaces which are  the preduals of weighted product Morrey spaces and extended the extrapolation theory on weighted product Morrey spaces.
In \cite{N24},  Nogayama obtained the Littlewood-Paley, heat semigroup and wavelet  characterizations for mixed Morrey spaces and its predual spaces. 
In \cite{DDDN24}, Dung, Dao, Duong and Nghia showed that the generalized block spaces are the preduals of certain generalized Morrey-Lorentz spaces. Moreover, they gave the mapping properties  for various classes of operators in  the generalized block spaces and generalized Morrey-Lorentz spaces, concerning Calder\'on-Zygmund type operator, commutators of Calder\'on-Zygmund type.
A comprehensive treatment of the preduals of Morrey  spaces can be seen in the monographs \cite[Chapter 5]{Ada15}, \cite[Chapter 9]{SFH20}.

The above literature considered the scalar ($\mathbb R$ or $\mathbb C$). Since $\mathbb R$ is a special Banach space, it is meaningful to research the Banach space valued Bourgain-Morrey  spaces and its predual. 
We also want to  research the boundedness of the powered Hardy-Littlewood maximal function on vector valued block spaces which will be applied in establishment of matrix weighted Bourgain-Morrey Triebel-Lizorkin spaces.

An outline of this article is as follows:
In Section \ref{pre}, we give some notations and concepts, including dyadic cubes, Hardy-Littlewood maximal function,  Banach space valued Bourgain-Morrey spaces $M_p^{t,r} (X)$, Banach space valued the block spaces $\mathcal{H}_{p'}^{t',r'} (^\ast X)$ ($^\ast X$ is the predual of $X$).
We show $L_c^\infty (X)$  is dense in $M_{p}^{t,r} (X)$ for $1 \le p<t<r<\infty$.
One of the main result of the paper that $\mathcal{H}_{p'}^{t',r'} ( ^\ast X)$ is a predual of $M_p^{t,r} (X)$, is obtained in Section  \ref{sec predual}.
In Section \ref{property block}, we show the completeness, denseness and Fatou property of block spaces $\mathcal{H}_{p'}^{t',r'} (^\ast X)$. The lattice  property of $\mathcal{H}_{p'}^{t',r'} (\ell^{q'})$ is obtained in this section.
In the last of this section, we obtain the dual of  $  M_p^{t,r} (X) $ is $ \H_{p'}^{t',r'}(  X ^\ast)  $ under the assumption that $X$  is reflexive and $1<p<t<r<\infty$.
In Section \ref{HL block},  the boundedness of powered Hardy-Littlewood maximal operator on vector valued block spaces  $\mathcal{H}_{p'}^{t',r'} (\ell^{q'})$ is obtained.

Throughout this paper, we let $c, C$ denote constants that are independent of the main parameters involved but whose value may differ from line to line.
By $A\lesssim B$ we mean that $A\leq CB$ with some positive constant $C$ independent of appropriate quantities. By $ A \approx B$, we mean that $A\lesssim B$ and $B\lesssim A$.
Let $\chi_{E}$ be the characteristic function of the set $E\subset\mathbb{R}^{n}$. Let $\mathbb N  =\{1,2,\ldots\} $ and let $\mathbb Z$ be all the integers. 
Let $\mathbb{N}_{0}:=\mathbb{N\cup}\{0\}$.
We let $ \sharp K $ be the cardinal number of the set $K$.
A vector valued functions $\vec f = \{f_i\}_{i=1}^\infty$ on $\rn $ is said measurable if each $f_i$ is a measurable function on $\rn$.
Let $\mathscr M (vec) $ be denotes the set of all measurable  vector valued functions on $\rn$.

\section{Preliminaries} \label{pre}
Throughout the paper, $X$  is a Banach space over the scalar field $\mathbb R$. The norm of an element $x \in X$  is denoted by $\|x\|_X$, or, if no confusion can arise, by $\|x\|$. The dual of $X$ is denoted by $X^\ast$. We use the notation $\langle x, x^* \rangle$ to denote the duality pairing of the elements $x\in X$ and $x^* \in X^\ast$.
Note that $ X^\ast$ is always a Banach space even if $X$  is a quasi-Banach space.

For $ 1 < p <\infty$, let $ p ' = p/(p-1)$. Let $1' =\infty  $ and $\infty ' = 1$.
For $j\in\mathbb{Z}$, $m\in\mathbb{Z}^{n}$, let $Q_{j,m}:=\prod_{i=1}^{n}[2^{-j}m_{i},2^{-j}(m_{i}+1))$.
For a cube $Q$, $\ell(Q)$ stands for the length of cube $Q$. We
denote by $\mathcal{D}$ the family of all dyadic cubes in $\mathbb{R}^{n}$,
while $\mathcal{D}_{j}$ is the set of all dyadic cubes with $\ell(Q)=2^{-j},j\in\mathbb{Z}$. Let $a Q , a >0$ be the  cube  concentric with $Q$, having  the side length of $a \ell (Q)$.
For any $R>0$, Let $ B(x,R) := \{y\in \rn : |x-y| <R \} $ be an open ball in $\rn $. Let $B_R : = B (0,R)$ for $R>0$.

Let $L^0 :=L^0 (\rn) $ be the  set of all measurable functions  on $\rn$.
Let $L_c^\infty : = L_c^\infty  (\rn) $ be the  set of all compactly supported bounded functions on $\rn$.
Let $ C_c^\infty : =  C_c^\infty  (\rn) $ be the  set of all  compactly supported smooth functions  on $\rn$.
For $0<p <\infty $, let $L_{\mathrm{loc}}^{p} : = L_{\mathrm{loc}}^{p} (\rn)$  be the set of locally $p$-integrable functions on $\rn$, that is, for each compact set $K \subset \rn$,  $ \int_K |f(x)|^p \d x <\infty  $.
For $0<p <\infty $, let $L_{\mathrm{loc}}^{p} (X) $  be the set of $X$-valued locally $p$-integrable functions on $\rn$, that is, for each compact set $K \subset \rn$,  $ \int_K \|f(x)\|_X^p \d x <\infty  $.

The Hardy-Littlewood maximal operator $\M$  is defined by 
\begin{equation*}
	\M f (x) : = \sup_{\operatorname{balls}  B \ni x} \frac{1}{|B|} \int_B |f(y)| \d y.
\end{equation*}
For $ 0< \eta <\infty$, we set the powered Hardy-Littlewood maximal operator $\M _\eta$ by 
\begin{equation*}
	\M _\eta f (x)  :=  \left( \M (f^\eta ) (x) \right)  ^{1/\eta}.
\end{equation*}
We define $\M _\eta  \vec f (x) : = \{ \M _\eta   f_i (x)   \}_{i=1}^\infty $ for a vector valued function $\vec f$. 

For the definition of Bourgain-Morrey
spaces, we use the notation in \cite{MS18} by Masaki and Segata.

\begin{definition}
	\label{def:Bourgain Morrey space} Let $0<p\le t<\infty$
	and $0<r\le\infty$. 
	The  Bourgain-Morrey space $M_{p}^{t,r}  $ consists of all  measurable functions $ f \in L_{\mathrm{loc}}^{p}$
	such that
	\[
	\| f \|_{M_{p}^{t,r}} :=\bigg\|\Big\{|Q_{v,m}|^{1/t-1/p}\Big(\int_{Q_{v,m}}|f(x)|^{p}\d x\Big)^{1/p}\Big\}_{v\in\mathbb{Z},m\in\mathbb{Z}^n}\bigg\|_{\ell^{r}}<\infty.
	\]
\end{definition}

\begin{definition}
	\label{def: X Bourgain Morrey space} Let $0<p\le t<\infty$
	and $0<r\le\infty$.  Let $X$  be a Banach space.
	The $X$-valued Bourgain-Morrey space $M_{p}^{t,r} (X) $ consists of all $X$-valued  functions $ f \in L_{\mathrm{loc}}^{p} (X)$
	such that
	\[
	\| f \|_{M_{p}^{t,r} (X)} :=  	\left\|  \|f \|_{X } \right\|_{M_{p}^{t,r}}  <\infty.
	\]
\end{definition}
Denote by $L^p (X) = M_{p}^{p,\infty} (X)$ the $X$-valued Lebesgue spaces.
\begin{lemma}[Theorem 2.10, \cite{HNSH23}] \label{not trivial}
	Let $0<p\le t<\infty$
	and $0<r\le\infty$. $M_{p}^{t,r} \neq \{ 0\}$ if and only if $ 0< p< t< r<\infty$  or $ 0 \le p \le t < r=\infty$.
\end{lemma}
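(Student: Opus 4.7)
The plan is to split the argument into a sufficiency direction (exhibit a nonzero element of $M_p^{t,r}$) and a necessity direction (show any nonzero $f\in M_p^{t,r}$ forces the listed inequalities), organizing the entire analysis around the scale parameter $v$ in the dyadic grid $\D_v$.

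For sufficiency, I would take $f=\chi_{Q_0}$ with $Q_0=[0,1)^n$ and split the ambient sum into the contribution of levels $v\le 0$ (large cubes, possibly containing $Q_0$) and $v>0$ (small cubes, possibly contained in $Q_0$). At each large-scale level $v\le 0$ only $O(1)$ cubes of $\D_v$ meet $Q_0$, and each contributes at most $|Q_{v,m}|^{1/t-1/p}|Q_0|^{1/p}$; summing over $v\le 0$ gives a geometric series in $2^{-vn(1/t-1/p)}$, which converges because $1/t-1/p<0$, i.e.\ because $p<t$ (with equality tolerated only if $r=\infty$, where a uniform sup suffices). At each fine-scale level $v>0$ there are $\approx 2^{vn}$ cubes contained in $Q_0$, and each contributes $|Q_{v,m}|^{1/t}=2^{-vn/t}$, so the $r$-th power of the level contribution is $\approx 2^{vn(1-r/t)}$, which is summable over $v>0$ exactly when $r>t$ (and trivially bounded when $r=\infty$, $t<\infty$).

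For necessity, assume $0\neq f\in M_p^{t,r}$. Replacing $f$ by $\|f\|_X$, I may assume $f\ge 0$, and choose $c>0$ together with a bounded set $E$ of positive measure on which $f\ge c$. By the Lebesgue density theorem, for every sufficiently large $v$ there are at least $c_n 2^{vn}|E|$ cubes $Q\in\D_v$ with $|Q\cap E|\ge |Q|/2$, each contributing $\gtrsim |Q|^{1/t}$ to the $\ell^r$ sum, so the contribution of level $v$ is bounded below by a constant multiple of $2^{vn(1-r/t)}$. Finiteness of the $\ell^r$ norm then forces $r>t$ when $r<\infty$. For the opposite end, at each $v$ with $2^{-v}$ large enough the set $E$ is contained in a single cube $Q_{v,m_v}$, which contributes a constant multiple of $|Q_{v,m_v}|^{1/t-1/p}$; if $p=t$ this constant is independent of $v$, producing infinitely many lower-bounded contributions, so $r<\infty$ is impossible. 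Together these rule out every combination outside the two cases in the lemma.

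The main technical obstacle will be the density-based counting of heavy dyadic cubes at scale $v$, because the naive estimate $\#\{Q\in\D_v:Q\subset E\}\approx 2^{vn}|E|$ is only valid asymptotically and must be replaced by a Lebesgue density statement for sets that need not be open. Everything else reduces to bookkeeping of two geometric sums (in $v\le 0$ and $v\ge 0$) and verifying the boundary cases $p=t$ and $r=\infty$ by hand.
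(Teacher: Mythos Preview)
The paper does not supply its own proof of this lemma; it is quoted verbatim from \cite[Theorem~2.10]{HNSH23} and left unproved. Your proposal is essentially correct and is the natural scale-by-scale analysis one would expect in the cited reference: test the space with $\chi_{[0,1)^n}$ for sufficiency, and for necessity show that any nonzero function forces geometric control at both ends of the dyadic scale.

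One small technical slip in the necessity argument: a bounded set $E$ need \emph{not} be contained in a single dyadic cube $Q_{v,m_v}$ for any $v$, however negative. For instance, if $E$ contains a neighborhood of the origin, it straddles all $2^n$ coordinate orthants and hence meets $2^n$ cubes of every $\D_v$. The repair is immediate: once $2^{-v}$ exceeds the diameter of $E$, the set $E$ meets at most $2^n$ cubes of $\D_v$, so at least one of them satisfies $|Q\cap E|\ge 2^{-n}|E|$, which is all you need for the lower bound $|Q_{v,m}|^{1/t-1/p}\|f\chi_{Q_{v,m}}\|_{L^p}\gtrsim |Q_{v,m}|^{1/t-1/p}$ and the conclusion that $p=t$ with $r<\infty$ is impossible. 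The Lebesgue density counting of heavy cubes at fine scales, which you flagged as the main technical point, is correct as stated.
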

\begin{remark}
	Let $ 0< p< t< r<\infty$  or $ 0 \le p \le t < r=\infty$.
	From Lemma \ref{not trivial}, we obtain $M_{p}^{t,r}(X)$  is not trivial.
	Indeed, let $f= \chi_{B(0,1)} \otimes h$ where  $h \in X$ with $\|h\|_{X} =1$. Then $f: \rn \to X$ and 
	\begin{equation*}
		0 < 	\| f \|_{M_{p}^{t,r} (X)} = \|h\|_{X} 	\|  \chi_{B(0,1)} \|_{M_{p}^{t,r} } <\infty.
	\end{equation*}
	
\end{remark}

For each $k \in \mathbb Z$, for a  $X$-valued measurable function $f$, define the average operator $E_k $  by
\begin{equation*}
	E_k (f) (x) = \sum_{Q \in \D_k} \frac{1}{|Q|} \int_Q f (y) \d y \chi_Q (x), \quad x \in \rn.
\end{equation*}

\begin{theorem} \label{E_k f to f}
	Let $X$ be a Banach space. Let $1\le p<t<r<\infty$. Then
	\begin{equation*}
		\lim_{k\to \infty} \| f - E_k (f) \|_{M_{p}^{t,r} (X) } =0.
	\end{equation*}
\end{theorem}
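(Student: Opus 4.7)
The plan is to establish the theorem in three steps, in the spirit of the standard proof that step functions are dense in $L^p$: first, I would show that the dyadic averaging operators $\{E_k\}_{k\in\mathbb Z}$ form a uniformly bounded family on $M_p^{t,r}(X)$; second, I would verify the convergence on the dense class $L_c^\infty(X)$, using pointwise Lebesgue differentiation together with dominated convergence; finally, a $3\varepsilon$-argument transfers the result to arbitrary $f \in M_p^{t,r}(X)$.

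For the uniform boundedness, fix $k$ and expand $\|E_k f\|_{M_p^{t,r}(X)}^r$ as a sum over $v \in \mathbb{Z}$, $m \in \mathbb{Z}^n$, splitting into the cases $v \le k$ and $v > k$. When $v \le k$, each $Q_{v,m}$ is a disjoint union of cubes from $\D_k$, on each of which $E_k f$ is constant; the vector-valued Jensen inequality gives
\begin{equation*}
\int_{Q_{v,m}} \|E_k f(x)\|_X^p \, \d x \le \int_{Q_{v,m}} \|f(x)\|_X^p \, \d x,
\end{equation*}
so these terms are dominated by the corresponding terms of $\|f\|_{M_p^{t,r}(X)}^r$. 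When $v > k$, each $Q_{v,m}$ lies inside a unique cube $Q \in \D_k$ and $E_k f \equiv a_Q := \tfrac{1}{|Q|}\int_Q f$ on $Q$; summing the resulting contribution $|Q_{v,m}|^{r/t}\|a_Q\|_X^r$ over all $Q_{v,m} \subset Q$ with $v > k$ produces the geometric series $|Q|^{r/t}\|a_Q\|_X^r \sum_{j \ge 1} 2^{jn(1 - r/t)}$, which converges precisely because $r > t$. A further application of Jensen bounds this by a constant multiple of the $Q$-term $|Q|^{r/t-r/p}\left(\int_Q \|f\|_X^p \, \d x\right)^{r/p}$ appearing at scale $v=k$ in $\|f\|_{M_p^{t,r}(X)}^r$, yielding $\|E_k f\|_{M_p^{t,r}(X)} \lesssim \|f\|_{M_p^{t,r}(X)}$ with a constant independent of $k$.

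For $f \in L_c^\infty(X)$ with $\operatorname{supp} f \subset B_R$ and $\|f(x)\|_X \le M$, the Lebesgue differentiation theorem for Banach-space-valued functions gives $E_k f(x) \to f(x)$ in $X$ for a.e.\ $x$, and for $k$ large $\|E_k f - f\|_X \le 2M \chi_{B_{R+1}}$ uniformly. Scalar dominated convergence on each $Q_{v,m}$ then yields $\int_{Q_{v,m}} \|E_k f - f\|_X^p \, \d x \to 0$, and the associated terms of the $\ell^r$-sum are dominated by the $\ell^r$-summable sequence arising from $\|2M \chi_{B_{R+1}}\|_{M_p^{t,r}} < \infty$, so a second dominated convergence argument, now at the $\ell^r$ level, gives $\|f - E_k f\|_{M_p^{t,r}(X)} \to 0$. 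For arbitrary $f$, pick $g \in L_c^\infty(X)$ close to $f$ (using the denseness established in Section \ref{pre}) and combine with the uniform boundedness of $E_k$ via the triangle inequality. The main obstacle I anticipate is the $v > k$ case of the uniform bound, where $E_k f$ distributes mass across dyadic scales finer than $k$ that do not appear in the decomposition of $f$; the essential mechanism controlling this is the geometric decay $\sum_{j \ge 1} 2^{jn(1-r/t)} < \infty$, which is where the hypothesis $t < r$ enters crucially.
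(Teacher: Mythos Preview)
Your three-step scheme has a circularity problem. In Step~3 you invoke the density of $L_c^\infty(X)$ in $M_p^{t,r}(X)$ ``established in Section~\ref{pre}''; but in the paper that density statement is the Corollary \emph{following} Theorem~\ref{E_k f to f}, and its proof uses Theorem~\ref{E_k f to f} directly (one approximates $f$ first by $E_k(f)$ and then truncates). So you would be assuming the theorem in order to prove it. Unless you supply an independent route to the density (and it is not clear there is a cheap one that avoids exactly this kind of averaging argument), Step~3 does not close.

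Apart from this, Steps~1 and~2 are sound. Your Step~1 is precisely the content of Lemma~\ref{E_k opera}, which the paper proves later with the same $v\le k$ / $v>k$ split and the same geometric series $\sum_{j\ge 1}2^{jn(1-r/t)}$; and Step~2 is fine for $f\in L_c^\infty(X)$. The paper avoids the circularity by proving the theorem directly for arbitrary $f$: one first picks $J$ large so that the $\ell^r$-tail over $|j|>J$ or $|m|>J$ is $<\varepsilon$, and then decomposes $\|f-E_k f\|_{M_p^{t,r}(X)}$ into four pieces $I+II+III+IV$ according to the position of $j$ relative to $[-J,J]$ and to $k$. The pieces $II$, $III$, $IV$ are handled by the tail estimate combined with exactly the Jensen/geometric-series computations you already wrote down for Step~1, while piece $I$ (finitely many $(j,m)$) is handled by the $L^p_{\mathrm{loc}}(X)$-convergence $E_k f\to f$. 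In effect the paper merges your Steps~1 and~2 into a single direct estimate, which is what lets it then deduce density as a corollary rather than having to assume it.
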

\begin{proof}
	We use the idea from \cite[Theorem 2.20]{HNSH23}.
	Let $f\in M_{p}^{t,r} (X)$ and $\epsilon>0 $ be fixed. Since $f \in M_{p}^{t,r} (X)$, there exists $J \in \mathbb N$ large enough such that
	\begin{align} \label{J epsilon}
		\nonumber
	&	\left(\sum_{j\in \mathbb Z}  \sum_{m \in \mathbb Z^n}  ( \chi_{\mathbb Z \backslash [-J,J]} (j)  + \chi_{\mathbb Z^n \backslash B_J  }  (m) )  |Q_{j,m}|^{r/t-r/p} \left( \int_{Q_{j,m} }  \|f(x)\|_X ^p \d x \right)^{r/p} \right)^{1/r} \\
	& <\epsilon.
	\end{align}
	Fix $k \in \mathbb N \cap (J,\infty)$. Set
	\begin{align*}
		I & := \left\| \left\{       |Q_{j,m}|^{1/t-1/p} \left( \int_{Q_{j,m}} \| f (y) -E_k (f) (y) \|_X^p \d y \right)^{1/p}  \right\}_{j \in [-J,J], m\in \mathbb Z^n}  \right\|_{\ell^r} , \\
		II & := \left\| \left\{       |Q_{j,m}|^{1/t-1/p} \left( \int_{Q_{j,m}} \| f (y)  \|_X^p \d y \right)^{1/p}  \right\}_{j \in \mathbb Z \backslash [-J,J], m\in \mathbb Z^n}  \right\|_{\ell^r} ,\\
		III & := \left\| \left\{       |Q_{j,m}|^{1/t-1/p} \left( \int_{Q_{j,m}} \| E_k  f (y)  \|_X^p \d y \right)^{1/p}  \right\}_{j \in ( (-\infty, -J) \cup (J,k]  )\cap \mathbb Z, m\in \mathbb Z^n}  \right\|_{\ell^r}, \\
		IV & := \left\| \left\{       |Q_{j,m}|^{1/t-1/p} \left( \int_{Q_{j,m}} \| E_k  f (y)  \|_X^p \d y \right)^{1/p}  \right\}_{j \in  \mathbb Z \backslash (-\infty,k], m\in \mathbb Z^n}  \right\|_{\ell^r}.
	\end{align*}
	Then we decompose
	\begin{equation*}
		\| f-  E_k (f) \|_{M_{p}^{t,r} (X) }  \le I +II+ III+ IV.
	\end{equation*}
	For $I$, by Minkowski's inequality, we see
	\begin{align*}
		I & \le \left\| \left\{       |Q_{j,m}|^{1/t-1/p} \left( \int_{Q_{j,m}} \| f (y) -E_k (f) (y) \|_X^p \d y \right)^{1/p}  \right\}_{j \in [-J,J], m\in \mathbb Z^n \cap B_J}  \right\|_{\ell^r} \\
		& \quad +\epsilon.
	\end{align*}
	Since $E_k (f)$  converges back to $f$ in $L_{\operatorname{loc}} ^p (X)$ and $L^p (X)$ (\cite[Theorem 3.3.2]{HVVW16}), we obtain
	\begin{equation*}
		I  < 2\epsilon
	\end{equation*} 
	as long as $k$ large enough.
	
	We move on $II$.  Simply use (\ref{J epsilon}) to conclude $II< \epsilon$.
	
	Next, we estimate $III$. For each $j \in \mathbb Z\cap (-\infty,k]$, by the disjointness of dyadic cubes,   general Minkowski's inequality, H\"older's inequality, we have
	\begin{align*}
		\left( \int_{Q_{j,m}} \| E_k  f (y)  \|_X^p \d y \right)^{1/p} & = \left( \sum_{Q \in \D_k, Q \subset Q_{j,m} } \int_{Q} \left\| |Q|^{-1} \int_Q f (z) \d z  \right\|_X^p \d y \right)^{1/p} \\
		& = \left( \sum_{Q \in \D_k, Q \subset Q_{j,m} } \left\| |Q|^{-1} \int_Q f (z) \d z  \right\|_X^p |Q| \right)^{1/p} \\
		& \le  \left( \sum_{Q \in \D_k, Q \subset Q_{j,m} }  |Q|^{-1}  \int_Q \|f (z)\|_X^p \d z  |Q| \right)^{1/p} \\
		& =  \left(  \int_{ Q_{j,m} } \|f (z)\|_X^p \d z   \right)^{1/p} .
	\end{align*}
	Taking the $\ell^r$-norm over $j \in ( (-\infty, -J) \cup (J,k]  )\cap \mathbb Z, m\in \mathbb Z^n $, we obtain
	\begin{align*}
		III \le \left(   \sum_{ j \in ( (-\infty, -J) \cup (J,k]  )\cap \mathbb Z}   \sum_{ m\in \mathbb Z^n }  |Q_{j,m}|^{r/t-r/p}  \left(  \int_{ Q_{j,m} } \|f (z)\|_X^p \d z  \right)^{r/p} \right)^{1/r} <\epsilon.
	\end{align*}
	Finally, we estimate $IV$. Let $j \in \mathbb Z \backslash (-\infty,k]$. Since $Q_{j,m} \subset Q$ for $Q\in \D_k$  as long as $Q_{j,m} \cap Q \neq\emptyset$, we have
	\begin{align*}
		| IV |^r & \le  \sum_{j=k+1}^\infty \sum_{m \in \mathbb Z^n}  |Q_{j,m}|^{r/t-r/p} \left( \int_{Q_{j,m}} \| E_k  f (y)  \|_X^p \d y \right)^{r/p} \\
		& = \sum_{j=k+1}^\infty  \sum_{m \in \mathbb Z^n,  Q\in \D_k, Q_{j,m} \subset Q}  |Q_{j,m}|^{r/t} \left\| \frac{1}{|Q|} \int_Q f  (z) \d z \right\|_X ^r.
	\end{align*}
	Note that $   |Q_{j,m}| = 2^{ -(j-k)n } |Q|$. Since $t<r<\infty$,  by  general Minkowski's inequality, H\"older's inequality,
	we obtain
	\begin{align*}
		| IV |^r & \le   \sum_{j=k+1}^\infty  \sum_{m \in \mathbb Z^n,  Q\in \D_k, Q_{j,m} \subset Q}  2^{ -(j-k)n r/t } |Q|^{r/t} \left\| \frac{1}{|Q|} \int_Q f  (z) \d z \right\|_X  ^r \\
		& \le  \sum_{j=k+1}^\infty  \sum_{m \in \mathbb Z^n,  Q\in \D_k, Q_{j,m} \subset Q}  2^{ -(j-k)n r/t } |Q|^{r/t-r/p} \left(  \int_Q \|f  (z) \|^p \d z  \right) ^{r/p} \\
		& \le \sum_{j=k+1}^\infty  2^{ -(j-k)n (1- r/t) }\sum_{  Q\in \D_k}|Q|^{r/t-r/p} \left(  \int_Q \|f  (z) \|^p \d z  \right) ^{r/p} \\
		& \lesssim \sum_{  Q\in \D_k}|Q|^{r/t-r/p} \left(  \int_Q \|f  (z) \|^p \d z  \right) ^{r/p} .
	\end{align*}
	Since $J \le k$, from (\ref{J epsilon}), we conclude $IV \lesssim \epsilon$.
	
	Combining the estimates $I -IV$, we get 
	\begin{equation*}
		\limsup_{k\to \infty} \| f-  E_k (f) \|_{M_{p}^{t,r} (X) }   \lesssim \epsilon.
	\end{equation*} 
	Since $\epsilon>0$ is arbitrary, we  obtain the desired result.
\end{proof}

As a further corollary, the set $L_c^\infty (X)$ of all compactly supported $X$-valued, bounded functions  is dense in $M_{p}^{t,r} (X)$.
\begin{corollary}
	Let $X$ be a Banach space.  Let $1 \le p<t<r<\infty$. Then $L_c^\infty (X)$  is dense in $M_{p}^{t,r} (X)$.
\end{corollary}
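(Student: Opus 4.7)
The plan is to combine Theorem \ref{E_k f to f} with a spatial truncation of the conditional expectation. Fix $f \in M_{p}^{t,r}(X)$ and $\epsilon>0$; Theorem \ref{E_k f to f} supplies $k \in \mathbb{N}$ with $\|f - E_k(f)\|_{M_{p}^{t,r}(X)} < \epsilon/2$, reducing the task to approximating the $X$-valued simple function $E_k(f) = \sum_{Q \in \mathcal{D}_k} c_Q \chi_Q$, where $c_Q := |Q|^{-1}\int_Q f(y)\,\d y \in X$. Since the cubes $Q \in \mathcal{D}_k$ all have side length $2^{-k}$, only finitely many lie inside $B_L$; hence the truncation $g_L := \sum_{Q \in \mathcal{D}_k,\, Q \subset B_L} c_Q \chi_Q$ is a finite sum of bounded vectors against characteristic functions of bounded cubes, so $g_L \in L_c^\infty(X)$.

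It therefore suffices to show $\|E_k(f) - g_L\|_{M_{p}^{t,r}(X)} \to 0$ as $L \to \infty$, and I would handle this by a double application of Lebesgue's dominated convergence theorem. Writing $h_L := E_k(f) - g_L$, the pointwise bound $\|h_L(x)\|_X \le \|E_k(f)(x)\|_X$ is immediate and, since every $x \in \rn$ lies in some $Q \in \mathcal{D}_k$ which is eventually contained in $B_L$, $h_L(x) \to 0$ pointwise. The triangle inequality $E_k(f) = f - (f - E_k(f))$ gives $E_k(f) \in M_{p}^{t,r}(X)$, so the sequence $a_{j,m} := |Q_{j,m}|^{r/t-r/p}\bigl(\int_{Q_{j,m}} \|E_k(f)\|_X^p\,\d x\bigr)^{r/p}$ is summable over $(j,m) \in \mathbb{Z} \times \mathbb{Z}^n$. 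For each fixed $(j,m)$, dominated convergence on $Q_{j,m}$ yields $\int_{Q_{j,m}} \|h_L\|_X^p\,\d x \to 0$, so the corresponding term $a_{j,m}^{(L)} := |Q_{j,m}|^{r/t-r/p}\bigl(\int_{Q_{j,m}} \|h_L\|_X^p\,\d x\bigr)^{r/p}$ tends to $0$ while remaining dominated by $a_{j,m}$ uniformly in $L$. A second application of dominated convergence, now on the counting measure of $\mathbb{Z} \times \mathbb{Z}^n$, then delivers $\|h_L\|_{M_{p}^{t,r}(X)}^r = \sum_{j,m} a_{j,m}^{(L)} \to 0$.

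Choosing $L$ large enough that $\|E_k(f) - g_L\|_{M_{p}^{t,r}(X)} < \epsilon/2$, a final triangle inequality gives $\|f - g_L\|_{M_{p}^{t,r}(X)} < \epsilon$, which proves the density. The only delicate point in the argument is producing the $\ell^r$-summable majorant $\{a_{j,m}\}$ needed for the outer dominated convergence; this is precisely what membership $E_k(f) \in M_{p}^{t,r}(X)$ — a direct byproduct of Theorem \ref{E_k f to f} — supplies, so no genuine obstacle arises.
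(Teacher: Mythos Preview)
Your proof is correct and follows essentially the same approach as the paper: reduce to approximating $E_k(f)$ via Theorem~\ref{E_k f to f}, then truncate spatially and use dominated convergence (both on the inner integral and on the outer $\ell^r$-sum, the latter with the summable majorant furnished by $E_k(f)\in M_p^{t,r}(X)$). The only cosmetic difference is that the paper truncates with $E_k(f)\chi_{B_R}$ whereas you keep only the dyadic cubes entirely inside $B_L$; both yield elements of $L_c^\infty(X)$ and the convergence argument is identical.
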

\begin{proof}
	Fix $\epsilon>0$. By Theorem \ref{E_k f to f}, there exists $k$ large enough such that 
	\begin{equation} \label{f -Ek f < eps}
		\| f - E_k (f) \|_{M_{p}^{t,r} (X) } < \epsilon.
	\end{equation}
	Note that $E_k (f)\chi_{B_R}  \in L_c^\infty (X) $  for all $R>0$.
	Fix a large number $k\in \mathbb N$ such that (\ref{f -Ek f < eps}) holds. By the disjointness of the family $\{ Q_{k,m} \}_{m\in \mathbb Z^n}$, 
	\begin{align*}
		&	\| E_k (f) - E_k (f)\chi_{B_R} \|_{ M_{p}^{t,r} (X)  }  \\
		& = \left( \sum_{j\in \mathbb Z, m\in \mathbb Z^n} |Q_{j,m}|^{r/t-r/p}   \left( \int_{ Q_{j,m}  } \|E_k (f) - E_k (f)\chi_{B_R} \|_X ^p \d x      \right)^{r/p}   \right)^{1/r}  \\
		& \le  \left( \sum_{j\in \mathbb Z}  \sum_{ m\in \mathbb Z^n, Q_{j,m} \cap B_R^c \neq \emptyset} |Q_{j,m}|^{r/t-r/p}   \left( \int_{ Q_{j,m}  } \|E_k (f) \|_X ^p \d x      \right)^{r/p}   \right)^{1/r} .
	\end{align*}
	Note that $E_k (f) \in M_{p}^{t,r} (X)$ according to Theorem \ref{E_k f to f}. By the Lebesgue convergence theorem  (see \cite[Proposition 1.2.5]{HVVW16} for Banach space valued dominated convergence theorem) and the fact $f\in M_{p}^{t,r} (X)$, we obtain
	\begin{equation*}
		\| E_k (f) - E_k (f)\chi_{B_R} \|_{ M_{p}^{t,r} (X)  }   <\epsilon
	\end{equation*}
	as long as $R$ large enough. 
	Then by Minkowski's inequality, we have
	\begin{equation*}
		\| f - E_k (f)\chi_{B_R} \|_{M_{p}^{t,r} (X) } < 2\epsilon.
	\end{equation*}
	Since $\epsilon>0$ is arbitrary, the proof is complete.
\end{proof}

\section{Predual  of $M_{p}^{t,r} (X)$} \label{sec predual}
In this section, we first establish the predual space of $X$-valued Bourgain-Morrey spaces. 
We further suppose that $X$ is the dual of  a Banach space. That is there exists a Banach space $^\ast X$ such that $  ( ^\ast X )^ \ast = X $. 
Then we recall the block spaces and the scalar block spaces can been seen in \cite[Definition 2.15]{M16}.
\begin{definition}
	Let $1\le p\le t\le\infty$ and $X$ be a Banach space such that $  ( ^\ast X )^ \ast = X $. A $^\ast X$-valued function $b$ is a $(p',t',{^\ast X})$-block
	if there exists a cube $Q$ which supports $b$ such that
	\[
	\left\| \|b\|_{^\ast X} \right\|_{L^{p'}}\le |Q|^{1/t-1/p} = |Q|^{1/p' -1/t'}.
	\]
	If we need to indicate $Q$, we  say that $b$ is a $(p',t', {^\ast X})$-block supported on $Q$.
\end{definition}

\begin{definition}\label{def block space}
	Let $1\le p\le t\le r\le\infty$ and $X$ be a Banach space such that $  ( ^\ast X )^ \ast = X $. The function space $\mathcal{H}_{p'}^{t',r'} (^\ast X)$
	is the set of  all $^\ast X$-valued measurable functions $f$ such that $f$ is realized
	as the sum
	\begin{equation}\label{eq:block f}
		f = \sum_{(j,k)\in\mathbb{Z}^{n+1}}\lambda_{j,k}b_{j,k}
	\end{equation}
	with some $\lambda=\{\lambda_{j,k}\}_{(j,k)\in\mathbb{Z}^{n+1}}\in\ell^{r'}(\mathbb{Z}^{n+1})$
	and $b_{j,k}$ is a  $(p',t', {^\ast X})$-block supported on $Q_{j,k}$ where (\ref{eq:block f}) converges in $^\ast X$ almost everywhere on $\rn$. The norm of $\mathcal{H}_{p'}^{t',r'} (^\ast X)$
	is defined by
	\[
	\|f\|_{\mathcal{H}_{p'}^{t',r'} (^\ast X)} :=\inf_{\lambda}\|\lambda\|_{\ell^{r'}},
	\]
	where the infimum is taken over all admissible sequence $\lambda$
	such that (\ref{eq:block f}) holds.
\end{definition}

\begin{remark}
	Let  $g_{j_0, k_0}$ be a $(p',t',{^\ast X})$-block supported on some $Q_{j_0, k_0}$. 
	Then $ \|g_{j_0, k_0} \|_{\mathcal{H}_{p'}^{t',r'} (^\ast X)}  \le 1$. Indeed, let 
	\begin{equation*}
		\lambda_{j,k} = \begin{cases}
			1, &  \operatorname{if} j = j_0, k = k_0; \\
			0 , & \operatorname{else}.
		\end{cases}
	\end{equation*} 
	Hence  $ \|g_{j_0, k_0} \|_{\mathcal{H}_{p'}^{t',r'} (^\ast X)}  \le  \left( \sum_{(j,k)\in   \mathbb Z ^{n+1}  } |	\lambda_{j,k} |^{r'}  \right)^{1/r'} =1.  $
\end{remark}

\begin{proposition}\label{block sum converge}
	Let $ 1 \le p < t < r \le \infty$. Let $X$ be a Banach space  such that $  ( ^\ast X )^ \ast = X $. Assume that $ \{\lambda_{j,k}\}_{(j,k)\in\mathbb{Z}^{n+1} }  \in \ell^{r'}$ and for each $(j,k)\in\mathbb{Z}^{n+1}$, $b_{j,k}$  is a  $(p',t', {^\ast X})$-block supported on $Q_{j,k}$. Then
	
	{\rm (i)} the summation (\ref{eq:block f}) converges  in $^\ast X $ almost everywhere   on $\rn$;
	
	{\rm (ii)} the summation (\ref{eq:block f}) converges   in $L^1_{\operatorname{loc}  }(^\ast X)$. 
\end{proposition}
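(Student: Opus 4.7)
The plan is to reduce both parts to proving the following stronger claim: for every compact set $K \subset \rn$,
\begin{equation*}
S_K := \sum_{(j,k)\in \mathbb{Z}^{n+1}} |\lambda_{j,k}| \int_K \|b_{j,k}(x)\|_{^\ast X} \d x < \infty.
\end{equation*}
Once this is shown, Tonelli's theorem gives $\sum_{(j,k)}|\lambda_{j,k}| \|b_{j,k}(x)\|_{^\ast X} < \infty$ for a.e.\ $x\in K$; since $^\ast X$ is a Banach space, absolute convergence implies convergence, so (\ref{eq:block f}) converges in $^\ast X$ for a.e.\ $x\in K$, and exhausting $\rn$ by compacts yields (i). For (ii), the partial sums are dominated pointwise in norm by the same integrable function $x \mapsto \sum_{(j,k)}|\lambda_{j,k}|\|b_{j,k}(x)\|_{^\ast X}$, so the Banach-valued dominated convergence theorem (\cite[Proposition 1.2.5]{HVVW16}) upgrades a.e.\ convergence to $L^1_{\operatorname{loc}}(^\ast X)$-convergence.

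The core estimate comes from the block definition and H\"older's inequality with exponents $p,p'$: for each $(j,k)$,
\begin{equation*}
\int_K \|b_{j,k}(x)\|_{^\ast X} \d x \le |Q_{j,k}\cap K|^{1/p}\cdot |Q_{j,k}|^{1/t - 1/p}.
\end{equation*}
I split the sum $S_K$ at a threshold $j_0=j_0(K)\in \mathbb Z$ chosen so that $|Q_{j_0,k}|\approx |K|$. For ``small cubes'' ($j\ge j_0$) use $|Q_{j,k}\cap K|\le |Q_{j,k}|$ to obtain $\int_K \|b_{j,k}\|_{^\ast X} \d x \le |Q_{j,k}|^{1/t}= 2^{-jn/t}$; the number of $k$ with $Q_{j,k}\cap K\neq \emptyset$ is at most $C(K) 2^{jn}$. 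For ``big cubes'' ($j< j_0$) use $|Q_{j,k}\cap K|\le |K|$ to obtain $\int_K \|b_{j,k}\|_{^\ast X}\d x\le |K|^{1/p}|Q_{j,k}|^{1/t-1/p}$; only $O(1)$ indices $k$ contribute per level $j$.

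To sum over $(j,k)$ I apply H\"older's inequality with exponents $r,r'$ first over $k$ (for each fixed $j$) and then over $j$, invoking the $\ell^{r'}$-bound on $\lambda$. In the small-cube regime this produces a geometric factor $2^{jn(1/r-1/t)}$, summable over $j\ge j_0$ because $r>t$; in the big-cube regime the factor is $2^{-jn(1/t-1/p)}$, which decays geometrically as $j\to -\infty$ because $p<t$ forces $1/t-1/p<0$. Both contributions are bounded by a constant depending on $K$ times $\|\lambda\|_{\ell^{r'}}$, yielding $S_K<\infty$. The chief obstacle is the dyadic bookkeeping and the correct double application of H\"older; the two strict gaps $p<t$ and $t<r$ are exactly what make the two one-sided geometric series converge, with neither condition being dispensable. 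The case $r=\infty$ (so $r'=1$) is actually simpler since the $\ell^{r'}$-bound becomes an $\ell^1$-bound and the outer H\"older step is replaced by the triangle inequality.
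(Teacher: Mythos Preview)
Your proposal is correct and follows essentially the same approach as the paper: both establish the absolute-summability estimate $\int_Q \sum_{(j,k)}|\lambda_{j,k}|\,\|b_{j,k}\|_{^\ast X}\,\d x<\infty$ on each fixed set (a dyadic cube in the paper, a compact set in your version) via H\"older on each block and a split into small/large cubes whose two geometric series converge precisely because $r>t$ and $p<t$, and then deduce (i) by Tonelli and (ii) by dominated convergence. The only cosmetic differences are that the paper works with the unit dyadic cube $[0,1)^n$ rather than a general compact $K$, and bounds each $\|\lambda_{j,\cdot}\|_{\ell^{r'}}$ directly by $\|\lambda\|_{\ell^{r'}}$ instead of applying a second H\"older in $j$.
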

\begin{proof}
	We use the idea from \cite[Propositon 3.4]{ZSTYY23}. 
	
	(i) We first claim that, for any given dyadic cube $Q\in \D$,
	\begin{equation} \label{eq almost every}
		\int_Q \sum_{(j,k)\in\mathbb{Z}^{n+1} } |\lambda_{j,k} |  \|b_{j,k} (x)\|_{^\ast X} \d x <\infty .
	\end{equation}
	Without loss of generality, we may assume that $Q := [0,1)^n $. Applying the Tonelli theorem, we obtain
	\begin{equation*}
		\int_Q \sum_{(j,k)\in\mathbb{Z}^{n+1} } |\lambda_{j,k} |  \|b_{j,k} (x)\|_{^\ast X} \d x = \sum_{(j,k)\in\mathbb{Z}^{n+1}, Q_{j,k} \cap Q \neq \emptyset }	 |\lambda_{j,k} | \int_Q   \|b_{j,k} (x)\|_{^\ast X} \d x .
	\end{equation*}
	By H\"older's inequality and support of $b_{j,k}$, we have
	\begin{align*}
		\int_Q    \|b_{j,k} (x)\|_{^\ast X} \d x  & \le |Q \cap Q_{j,k}|^{1/p}  \left\|  \|b_{j,k} \|_{^\ast X} \right\|_{L^{p'}} \\
		& \le \min\{ 1, 2^{-jn/p} \} 2^{-jn (1/t-1/p) }.
	\end{align*}
	Thus by $r >t $, we have
	\begin{align*}
	&	\sum_{j =0}^\infty \sum_{k \in \mathbb Z^n, Q_{j,k} \subset Q }	 |\lambda_{j,k} | \int_Q   \|b_{j,k} (x)\|_{^\ast X} \d x \\
		& \le \sum_{j =0}^\infty   2^{-jn/p} 2^{-jn (1/t-1/p) } \sum_{k \in \mathbb Z^n, Q_{j,k} \subset Q }	 |\lambda_{j,k} | \\
		& \le \sum_{j =0}^\infty    2^{-jn /t }  2^{jn/r} \left( \sum_{k \in \mathbb Z^n, Q_{j,k} \subset Q }	 |\lambda_{j,k} |^{r'} \right)^{1/r'} \\
		& \lesssim \| \{\lambda_{j,k} \}_{ (j,k)\in\mathbb{Z}^{n+1} } \|_{\ell^{r'}} < \infty .
	\end{align*}
	By $p < t$, we have
	\begin{align*}
		& \sum_{j =-\infty}^{-1} \sum_{k \in \mathbb Z^n, Q \subset Q_{j,k} }	 |\lambda_{j,k} | \int_Q   \|b_{j,k} (x)\|_{^\ast X} \d x \\
		& \le \sum_{j =-\infty}^{-1} 2^{-jn (1/t-1/p) }  \sum_{k \in \mathbb Z^n, Q \subset Q_{j,k} }	 |\lambda_{j,k} |  \\
		& = \sum_{j =-\infty}^{-1} 2^{-jn (1/t-1/p) }  \left( \sum_{k \in \mathbb Z^n, Q \subset Q_{j,k} }	 |\lambda_{j,k} |^{r'} \right)^{1/r'} \\
		& \lesssim \| \{\lambda_{j,k} \}_{ (j,k)\in\mathbb{Z}^{n+1} } \|_{\ell^{r'}} < \infty .
	\end{align*}
	Hence, from the above two estimates, we prove (\ref{eq almost every})  and the claim holds true.
	Using this claim, we find that, for any $Q\in \D$,
	\begin{equation*}
		\left\|  \int_Q \sum_{(j,k)\in\mathbb{Z}^{n+1} } \lambda_{j,k}   b_{j,k} (x)\d x \right\|_{^\ast X}  \le \int_Q \sum_{(j,k)\in\mathbb{Z}^{n+1} } |\lambda_{j,k} |  \|b_{j,k} (x)\|_{^\ast X} \d x <\infty.
	\end{equation*} 
	Thus, $ \sum_{(j,k)\in\mathbb{Z}^{n+1} } \lambda_{j,k}   b_{j,k} $ converges in  $^\ast X$ almost everywhere  on any $Q\in \D$. From the arbitrariness of $Q \in \D$, we infer that  $ \sum_{(j,k)\in\mathbb{Z}^{n+1} } \lambda_{j,k}   b_{j,k} $
	converges in  $^\ast X$ almost everywhere on $\rn$. Thus we prove (i).
	
	(ii) Applying (\ref{eq almost every}) and the Lebesgue dominated convergence theorem (see \cite[Proposition 1.2.5]{HVVW16} for Banach space valued dominated convergence theorem), we have that for any $Q\in \D$,
	\begin{equation*}
		\lim_{N \to \infty} \int_Q \Big\|  \sum_{ |j| < N , |k|_\infty < N}  \lambda_{j,k}   b_{j,k} (x) \Big\|_{^\ast X} \d x = \int_Q \Big\|  \sum_{ (j,k)\in\mathbb{Z}^{n+1} }  \lambda_{j,k}   b_{j,k} (x) \Big\|_{^\ast X} \d x,
	\end{equation*}
	which, together with (\ref{eq almost every}) again, further implies that the summation (\ref{eq:block f}) converges in $L^1_{\operatorname{loc}  }(^\ast X)$.
	Here $|k|_\infty = \max \{ k_1, \ldots, k_n\}$.
	Thus we prove (ii).
\end{proof}

\begin{remark}
	Based on Proposition \ref{block sum converge}, we find that, in Definition \ref{def block space}, instead of the requirement that   the summation (\ref{eq:block f}) converges in $^\ast X$ almost everywhere on $\rn$, if we require that (\ref{eq:block f}) converges in $L^1_{\operatorname{loc}  }(^\ast X)$, we then obtain the same block space.
\end{remark}

\begin{lemma}
	Let $X$ be a Banach space  such that $  ( ^\ast X )^ \ast = X $.
	Let $1< p = t <r =\infty $. Then  $\mathcal{H}_{p'}^{p',1} (^\ast X) = L^{p'} (^\ast X )$ with coincidence
	of norms.
\end{lemma}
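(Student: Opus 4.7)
Since $t = p$, the $(p',p',{^\ast X})$-block condition $\|\|b\|_{^\ast X}\|_{L^{p'}} \le |Q|^{1/t-1/p}$ reduces to the size-independent bound $\|\|b\|_{^\ast X}\|_{L^{p'}} \le 1$; a block is then just an $^\ast X$-valued $L^{p'}$-function of norm at most one supported on a dyadic cube. I would establish the isometric identity by proving the two continuous inclusions with matching norms.

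For $\mathcal{H}_{p'}^{p',1}(^\ast X) \hookrightarrow L^{p'}(^\ast X)$, given any admissible decomposition $f = \sum_{(j,k)}\lambda_{j,k}b_{j,k}$, Proposition~\ref{block sum converge} secures convergence in $L^1_{\operatorname{loc}}(^\ast X)$, and the triangle inequality in $L^{p'}(^\ast X)$ yields
\begin{equation*}
\|f\|_{L^{p'}(^\ast X)} \le \sum_{(j,k)}|\lambda_{j,k}|\,\|b_{j,k}\|_{L^{p'}(^\ast X)} \le \|\lambda\|_{\ell^1};
\end{equation*}
taking the infimum over admissible $\lambda$ delivers one inequality. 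For the reverse inclusion $L^{p'}(^\ast X) \hookrightarrow \mathcal{H}_{p'}^{p',1}(^\ast X)$, I would tile $\mathbb R^n$ by the unit dyadic cubes $\{Q_{0,k}\}_{k \in \mathbb Z^n}$ and set $\lambda_{0,k} := \|f\chi_{Q_{0,k}}\|_{L^{p'}(^\ast X)}$, $b_{0,k} := f\chi_{Q_{0,k}}/\lambda_{0,k}$ when $\lambda_{0,k} > 0$ (and $b_{0,k} := 0$ otherwise), together with $\lambda_{j,k} := 0$ for $j \ne 0$. By construction each $b_{0,k}$ is a valid block, and the vector-valued Lebesgue dominated convergence theorem (cf.\ \cite[Proposition~1.2.5]{HVVW16}) delivers $f = \sum_k \lambda_{0,k}b_{0,k}$ in $L^{p'}(^\ast X)$.

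The main obstacle lies in tightening the $\ell^1$-bound: disjointness of supports only yields the $\ell^{p'}$-identity $\sum_k \lambda_{0,k}^{p'} = \|f\|_{L^{p'}(^\ast X)}^{p'}$, which when $p' > 1$ is strictly weaker than the estimate $\sum_k \lambda_{0,k} \le \|f\|_{L^{p'}(^\ast X)}$ needed for true isometric coincidence. I expect the proof either to refine the construction by switching to coarser dyadic scales---so that a few very large cubes absorb the bulk of $f$ as single blocks, leveraging the fact that the block condition is scale-invariant when $t=p$---or to sidestep the explicit construction entirely by appealing to the duality of Section~\ref{sec predual}: once both $\mathcal{H}_{p'}^{p',1}(^\ast X)$ and $L^{p'}(^\ast X)$ are realised as preduals of $M_p^{p,\infty}(X) = L^p(X)$ under the canonical integration pairing, the isometry follows from the forward inclusion together with a Hahn--Banach separation argument.
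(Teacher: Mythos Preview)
Your plan is correct, and the first fix you propose---exploiting the scale-invariance of the block condition when $t=p$ so that one large cube absorbs almost all of $f$---is exactly the paper's route. Concretely, the paper chooses an exhausting sequence of cubes $Q_1\subset Q_2\subset\cdots$ centred at the origin with $\|\chi_{\mathbb R^n\setminus Q_k}f\|_{L^{p'}(^\ast X)}\le \epsilon\,2^{-k}$, writes $f=\chi_{Q_1}f+\sum_{j\ge 2}\chi_{Q_j\setminus Q_{j-1}}f$, and reads off the $\ell^1$-bound $\|f\|_{L^{p'}(^\ast X)}+\epsilon$; your duality alternative would also work but is not needed. One minor point: Proposition~\ref{block sum converge} is stated for $p<t$ and so does not literally apply here, but when $p=t$ convergence of $\sum_{(j,k)}\lambda_{j,k}b_{j,k}$ in $L^{p'}(^\ast X)$ follows directly from $\lambda\in\ell^1$ together with the uniform bound $\|b_{j,k}\|_{L^{p'}(^\ast X)}\le 1$, so no extra ingredient is required.
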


\begin{proof}
	The scalar version can be seen in \cite[Proposition 339]{SFH20}. 
	
	If $f \in \mathcal{H}_{p'}^{p',1} (^\ast X )$. Then there exist a sequence $ \{ \lambda_{j,k}\} _{  (j,k)  \in \mathbb Z^{1+n} } \in  \ell^1  $ and a sequence $\{ b_{j,k} \}_{(j,k)  \in \mathbb Z^{1+n}  }$ of  $(p',p',{^\ast X}) $-blocks such that  $f = \sum_{ (j,k) \in \mathbb Z^{1+n}  }   \lambda_{j,k}  b_{j,k} $  and
	\begin{equation*}
		\sum_{  (j,k) \in \mathbb Z^{1+n} }    | \lambda_{j,k}|    \le  (1+\epsilon)	\| f\|_{\mathcal{H}_{p'}^{p',1} (^\ast X ) }.
	\end{equation*}
	Then 
	\begin{align*}
		\left\| \|f\|_{^\ast X}  \right\|_{ L^{p'}  } & =\left\| \left\|\sum_{ (j,k) \in \mathbb Z^{1+n}  }   \lambda_{j,k}  b_{j,k} \right\|_{^\ast X} \right\|_{L^{p'} } \\ & \le 	\sum_{  (j,k) \in \mathbb Z^{1+n} }    | \lambda_{j,k}|    \le  (1+\epsilon)	\| f\|_{\mathcal{H}_{p'}^{p',1} (^\ast X ) }.
	\end{align*}
	For the another direction, let $f \in  L^{p'} (^\ast X )$. Fix $\epsilon>0$. Then there is a decomposition 
	\begin{equation*}
		f = \chi_{Q_1} f + \sum_{j=2}^\infty  \chi_{Q_j \backslash Q_{j-1}} f, 
	\end{equation*}
	where $ \{ Q_j\}_{j=1}^\infty  $ is an increasing sequence of cubes centered at the origin satisfying 
	\begin{equation*}
		\| \chi_{\rn \backslash  Q_k } f \|_{  L^{p'} (^\ast X ) } \le \epsilon 2^{-k}
	\end{equation*}
	for all $k \in \mathbb N$. Then using this decomposition, we have
	\begin{equation*}
		\| f\|_{\mathcal{H}_{p'}^{t',r'} (^\ast X )  }  \le \| f\|_{ L^{p'} (^\ast X ) }  + \sum_{k=1}^\infty \epsilon 2^{-k}.
	\end{equation*}
	Thus $f \in \mathcal{H}_{p'}^{t',r'} (^\ast X ) $.  Thus the proof is complete.
\end{proof}

Before proving the predual of $M_{p}^{t,r} (X)$,  we first recall the following result.
\begin{lemma}[Corollary 1.3.22, \cite{HVVW16}] \label{duality Banach value}
	Let $X$ be a Banach space. Let $(S, \mathcal A, \mu )$ be a $\sigma$-finite  measure space and let $X$ be reflexive or $X^\ast$ be separable. Then for all $1\le p <\infty $ we have an isometric 	isomorphism
	\begin{equation*}
		(L^p (S, X) ) ^* = L^{p'} (S,  X^\ast), \quad  1/p +1/p' =1.
	\end{equation*}
\end{lemma}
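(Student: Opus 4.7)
The plan is to prove this classical Bochner space duality in two stages: first establishing that the natural map $T \colon L^{p'}(S, X^\ast) \to (L^p(S, X))^\ast$ defined by
$$T(g)(f) := \int_S \langle f(s), g(s) \rangle \, \d\mu(s)$$
is a well-defined isometric embedding, and then showing that $T$ is surjective. The embedding step follows by applying H\"older's inequality to the scalar function $s \mapsto \|f(s)\|_X \|g(s)\|_{X^\ast}$, giving $\|T(g)\| \le \|g\|_{L^{p'}(S, X^\ast)}$. For the matching lower bound, given $g$ and $\epsilon > 0$, I would invoke a measurable selection theorem (Kuratowski--Ryll-Nardzewski, using that under either hypothesis we may arrange the relevant norming procedure to be measurable) to choose a strongly measurable $u \colon S \to X$ with $\|u(s)\|_X = 1$ almost everywhere and $\langle u(s), g(s) \rangle > (1-\epsilon)\|g(s)\|_{X^\ast}$, and then form a normalized test function $f(s) := u(s) \|g(s)\|_{X^\ast}^{p'-1}$ realizing the norm of $T(g)$ up to $\epsilon$.

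The harder direction is surjectivity. Given $\Phi \in (L^p(S, X))^\ast$, I would first restrict to sets of finite measure and define an $X^\ast$-valued set function by $\nu(B)(x) := \Phi(\chi_B \otimes x)$ for $x \in X$ and $B \in \mathcal{A}$ with $\mu(B) < \infty$. One verifies that $\nu$ is countably additive in norm, of bounded variation controlled by $\|\Phi\|$, and $\mu$-absolutely continuous. The hypothesis that $X$ is reflexive or $X^\ast$ is separable enters here precisely to give $X^\ast$ the Radon--Nikodym property with respect to $(S, \mathcal{A}, \mu)$: reflexive spaces have RNP (so $X$ reflexive gives $X^\ast$ reflexive, hence RNP), and a separable dual has RNP by Dunford--Pettis. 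Consequently there exists a Bochner measurable density $g \colon S \to X^\ast$ with $\nu(B) = \int_B g \, \d\mu$. After exhausting $S$ by sets of finite measure via $\sigma$-finiteness and patching the local densities, one checks $T(g) = \Phi$ first on simple functions $\chi_B \otimes x$, then by linearity, and finally on all of $L^p(S, X)$ by density of simple functions.

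The main obstacle is the Radon--Nikodym step: one must verify that $\nu$ is countably additive in a topology strong enough to invoke the vector-valued Radon--Nikodym theorem (not merely weak-$\ast$ additivity) and that the resulting density $g$ actually lies in $L^{p'}(S, X^\ast)$. The countable additivity is handled by dominated convergence applied against test functions $\chi_B \otimes x$ together with the bounded variation of $\nu$. For integrability of $g$, I would apply the isometry from Stage one to truncations $g_n := g \chi_{\{\|g\|_{X^\ast} \le n\}}$ to get a uniform bound $\|g_n\|_{L^{p'}(S, X^\ast)} \le \|\Phi\|$, and then pass to the limit by monotone convergence, which simultaneously yields $\|T(g)\| = \|g\|_{L^{p'}(S, X^\ast)} = \|\Phi\|$ and completes the isometry.
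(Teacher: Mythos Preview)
The paper does not prove this lemma at all: it is quoted verbatim as Corollary~1.3.22 from \cite{HVVW16} and used as a black box in the proof of Theorem~\ref{predual seq BM}. So there is no ``paper's own proof'' to compare against.

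Your outline is the standard Radon--Nikodym argument and is essentially the one given in \cite{HVVW16}. The embedding step is fine. For surjectivity your identification of the key point is correct: the hypothesis on $X$ is used exactly to ensure $X^\ast$ has the Radon--Nikodym property, and your two cases (reflexive $\Rightarrow$ $X^\ast$ reflexive $\Rightarrow$ RNP; $X^\ast$ separable $\Rightarrow$ RNP by Dunford--Pettis) are the right ones. One point you flag but should make sharper: the vector measure $\nu(B)(x) = \Phi(\chi_B \otimes x)$ is a priori only weak-$\ast$ countably additive in $X^\ast$, not norm countably additive, and Orlicz--Pettis does not directly bridge weak-$\ast$ to norm. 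The clean fix is to first show $\nu$ has bounded variation with $|\nu| \ll \mu$ (this follows from the bound $\|\nu(B)\|_{X^\ast} \le \|\Phi\| \mu(B)^{1/p}$ and a standard partition argument), after which norm countable additivity is automatic because the tail $\|\sum_{n>N} \nu(B_n)\| \le |\nu|(\bigcup_{n>N} B_n) \to 0$. With that clarification your sketch is complete and matches the literature.
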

Now we are ready to show that
the predual space of  $M_{p}^{t,r} (X)$ is $\mathcal{H}_{p'}^{t',r'} (^\ast X)$.
\begin{theorem} \label{predual seq BM}
	Let $X$ be a Banach space  such that $  ( ^\ast X )^ \ast = X $.
	Let $^\ast X$ be reflexive or $X$ be separable. Let $1 < p < t < r <\infty$  or $1 <  p \le t < r =\infty $. Then the dual  space of
	$\mathcal{H}_{p'}^{t',r'} (^\ast X)$, denoted by $ \Big( \mathcal{H}_{p'}^{t',r'} (^\ast X) \Big)^* $, is
	$M_{p}^{t,r} (X)$, that is,  
	\begin{equation*}
		\Big( \mathcal{H}_{p'}^{t',r'} (^\ast X) \Big)^*   = M_{p}^{t,r} (X)
	\end{equation*}
	in the following sense:
	
	{\rm (i)} if $f \in M_{p}^{t,r} (X)$, then the linear functional 
	\begin{equation} \label{Jf}
		J_f : f \to  J_f (g) :=   \int_\rn  \langle g (x), f (x) \rangle \d x
	\end{equation}
	is bounded on $ \mathcal{H}_{p'}^{t',r'} (^\ast X) $.
	
	{\rm (ii)} conversely, any continuous linear functional on $\mathcal{H}_{p'}^{t',r'} (^\ast X)$ arises as in  (\ref{Jf}) with a unique $f \in M_{p}^{t,r} (X)$.
	
	Moreover, $\|f\|_{  M_{p}^{t,r} (X)}  = \| J_f \|_{ ( \mathcal{H}_{p'}^{t',r'} (^\ast X) )^*}$ and 
	\begin{equation} \label{H = max M le 1}
		\|g\|_{\mathcal{H}_{p'}^{t',r'} (^\ast X)  } =\max \left\{\left|\int_\rn  \langle g (x), f (x) \rangle \d x    \right|: f\in  M_{p}^{t,r} (X), \|f\|_{  M_{p}^{t,r} (X) } \le 1 \right\}.
	\end{equation}
\end{theorem}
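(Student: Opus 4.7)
The plan is a Hahn--Banach/duality argument following the scalar template (cf.\ \cite[Proposition 3.6]{ZSTYY23}) but replacing the scalar $L^{p'}/L^p$ duality by the vector-valued version in Lemma \ref{duality Banach value} at the key step. Part (i) is a direct block-by-block estimate. Given a decomposition $g=\sum_{(j,k)}\lambda_{j,k}b_{j,k}$ of $g\in\mathcal{H}_{p'}^{t',r'}({}^\ast X)$, the pointwise bound $|\langle b_{j,k}(x),f(x)\rangle|\le\|b_{j,k}(x)\|_{^\ast X}\|f(x)\|_X$ together with H\"older's inequality gives
\[
\left|\int_\rn\langle b_{j,k},f\rangle\,\d x\right|\le\bigl\|\|b_{j,k}\|_{^\ast X}\bigr\|_{L^{p'}}\!\left(\int_{Q_{j,k}}\|f\|_X^p\,\d x\right)^{1/p}\le|Q_{j,k}|^{1/t-1/p}\!\left(\int_{Q_{j,k}}\|f\|_X^p\,\d x\right)^{1/p}.
\]
Multiplying by $|\lambda_{j,k}|$, summing and applying H\"older in $\ell^{r'}\times\ell^r$, with the interchange of sum and integral justified by Proposition \ref{block sum converge}(ii), yields $|J_f(g)|\le\|\lambda\|_{\ell^{r'}}\|f\|_{M_p^{t,r}(X)}$. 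Taking the infimum over $\lambda$ gives $\|J_f\|\le\|f\|_{M_p^{t,r}(X)}$.

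For the existence half of part (ii), let $T\in(\mathcal{H}_{p'}^{t',r'}({}^\ast X))^*$. Any $h\in L^{p'}({}^\ast X)$ supported in a fixed dyadic cube $Q_{j,k}$ equals the scalar $|Q_{j,k}|^{1/p-1/t}\bigl\|\|h\|_{^\ast X}\bigr\|_{L^{p'}}$ times a single $(p',t',{}^\ast X)$-block, so
\[
\|h\|_{\mathcal{H}_{p'}^{t',r'}({}^\ast X)}\le|Q_{j,k}|^{1/p-1/t}\bigl\|\|h\|_{^\ast X}\bigr\|_{L^{p'}}.
\]
Therefore $T$ restricts to a bounded linear functional on $L^{p'}(Q_{j,k},{}^\ast X)$. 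The hypothesis that either ${}^\ast X$ is reflexive or $({}^\ast X)^*=X$ is separable is precisely what is needed to invoke Lemma \ref{duality Banach value}, which produces a unique representer $f_{j,k}\in L^p(Q_{j,k},X)$. Almost-everywhere uniqueness of these representers forces compatibility across the nested dyadic family, and gluing along the disjoint tiling $\{Q_{0,m}\}_{m\in\mathbb{Z}^n}$ yields a single $f\in L^p_{\mathrm{loc}}(X)$ with $T(h)=\int_\rn\langle h(x),f(x)\rangle\,\d x$ for every $h\in L_c^\infty({}^\ast X)$.

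To see that $f\in M_p^{t,r}(X)$ with $\|f\|_{M_p^{t,r}(X)}\le\|T\|$, I use finite-sum duality. Fix a finite $F\subset\mathbb{Z}^{n+1}$, a nonnegative sequence $\{\mu_{j,k}\}_{(j,k)\in F}$ with $\|\mu\|_{\ell^{r'}}\le1$, and for each $(j,k)\in F$ a function $b_{j,k}$ supported on $Q_{j,k}$ with $\bigl\|\|b_{j,k}\|_{^\ast X}\bigr\|_{L^{p'}}\le1$. Then
\[
g:=\sum_{(j,k)\in F}\mu_{j,k}\,|Q_{j,k}|^{1/t-1/p}\,b_{j,k}
\]
is a finite combination of $(p',t',{}^\ast X)$-blocks with coefficients $\mu_{j,k}$, so $\|g\|_{\mathcal H}\le\|\mu\|_{\ell^{r'}}\le1$. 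Pairing $g$ with the representer $f$, then taking suprema first over the $b_{j,k}$ (vector-valued $L^{p'}/L^p$ duality cube-by-cube) and then over $\mu$ ($\ell^{r'}/\ell^r$ duality) gives
\[
\left(\sum_{(j,k)\in F}|Q_{j,k}|^{r/t-r/p}\left(\int_{Q_{j,k}}\|f\|_X^p\,\d x\right)^{r/p}\right)^{1/r}\le\|T\|,
\]
and letting $F\uparrow\mathbb{Z}^{n+1}$ by monotone convergence completes the estimate. Combined with part (i) this is an isometric identification $M_p^{t,r}(X)\cong(\mathcal{H}_{p'}^{t',r'}({}^\ast X))^*$. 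The identity (\ref{H = max M le 1}) with the supremum attained then follows by Hahn--Banach: for every $g$, there is a norm-one functional on $\mathcal{H}_{p'}^{t',r'}({}^\ast X)$ taking the value $\|g\|_{\mathcal H}$ at $g$, and the isometry identifies it with a unique $f$ in the closed unit ball of $M_p^{t,r}(X)$.

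The main obstacle I anticipate is the gluing step in part (ii): verifying that the local $L^p(Q_{j,k},X)$-representers produced by Lemma \ref{duality Banach value} really do combine consistently into a single Bochner-measurable $f\in L^p_{\mathrm{loc}}(X)$. This rests on a.e.-uniqueness of the Bochner representer on each cube together with the nesting compatibility across dyadic generations, which is routine but must be checked carefully in the vector-valued setting where the pointwise values live in the possibly non-separable space $X$.
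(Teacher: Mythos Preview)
Your proposal is correct and follows essentially the same route as the paper: block-by-block H\"older for (i), then for (ii) the restriction of the functional to each $L^{p'}(Q_{j,k},{}^\ast X)$ via Lemma \ref{duality Banach value}, gluing to a single $f\in L^p_{\mathrm{loc}}(X)$, and a finite-sum $\ell^{r'}/\ell^r$ duality argument to bound $\|f\|_{M_p^{t,r}(X)}$ by $\|T\|$. The paper organizes the last step with $\varepsilon$-near-optimizers $g_{j,k,\varepsilon}$ rather than direct suprema, and makes explicit the density argument (finite block sums $b_N\to b$ in $\mathcal{H}_{p'}^{t',r'}({}^\ast X)$) that extends $T=J_f$ from $L_c^\infty({}^\ast X)$ to the whole space---you should state this step rather than leave it implicit.
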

\begin{proof}
	(i) Let $g \in \mathcal{H}_{p'}^{t',r'} (^\ast X)$ and $\epsilon>0$. Then $g = \sum_{(j,k)\in\mathbb{Z}^{n+1}}\lambda_{j,k}b_{j,k}$ with $\lambda=\{\lambda_{j,k}\}_{(j,k)\in\mathbb{Z}^{n+1}}\in\ell^{r'}(\mathbb{Z}^{n+1})$
	and $g_{jk}$ is a  $(p',t', {^\ast X})$-block supported on $Q_{j,k}$ such that 
	\begin{equation*}
		\left( \sum_{(j,k)\in\mathbb{Z}^{n+1}} |\lambda_{j,k}|^{r'}\right)^{1/r'}  \le (1+\epsilon) \| g\|_{ \mathcal{H}_{p'}^{t',r'} (^\ast X)}.
	\end{equation*}
	By  H\"older's inequality,
	\begin{align} \label{f g le M H}
		\nonumber
		\left| \int_\rn \langle g (x), f (x) \rangle \d x\right| &  = 	\left|	\int_\rn  \left\langle \sum_{(j,k)\in\mathbb{Z}^{n+1}}\lambda_{j,k}b_{j,k} (x) ,  f (x) \right\rangle \d x\right| \\
		\nonumber
		& =\left| \sum_{(j,k)\in\mathbb{Z}^{n+1}}\lambda_{j,k} \int_{Q _{j,k} } \langle b_{j,k} (x) , f (x) \rangle \d x \right| \\
		\nonumber
		& \le \sum_{(j,k)\in\mathbb{Z}^{n+1}} |\lambda_{j,k}| \int_{Q _{j,k} }   \| b_{j,k} (x)\|_{^\ast X} \| f (x)\|_{X} \d x \\
		\nonumber
		& \le \sum_{(j,k)\in\mathbb{Z}^{n+1}} |\lambda_{j,k}| |Q|^{1/t-1/p} \left( \int_{Q _{j,k} } \| f (x)\|_{X}^p   \d x \right)^{1/p}\\
		\nonumber
		& \le \left( \sum_{(j,k)\in\mathbb{Z}^{n+1}} |\lambda_{j,k}|^{r'}\right)^{1/r'} \| f\|_{M_{p}^{t,r} (X) } \\
		& \le (1+\epsilon) \| g\|_{ \mathcal{H}_{p'}^{t',r'} (^\ast X)}  \| f\|_{M_{p}^{t,r} (X) }.
	\end{align}
	Letting $\epsilon\to 0^+$,  we prove (i).
	
	(ii) Let $L$ be a continuous linear functional on $\mathcal{H}_{p'}^{t',r'} (^\ast X) $. For any $j \in \mathbb Z$, $k \in \mathbb Z^n$ and for any  $^\ast X$-valued function $ g  \in L^{p'} (^\ast X) (Q_{j,k}) $ (the norm of $L^{p'} (^\ast X) (Q_{j,k})$ is $\|g\|_{L^{p'} (^\ast X) (Q_{j,k})}  = ( \int_{Q_{j,k} } \| g(x)\|_{ ^\ast X} ^{p'}\d x )^{1/p'}$), the mapping
	\begin{equation} \label{L jk}
		L_{j,k} (g) : = L ( g \chi_{Q_{j,k}} )
	\end{equation}
	is a continuous linear functional on $ L^{p'} (^\ast X) (Q_{j,k})$. Then using the condition that $^\ast X$ is reflexive or $X$ is separable and Lemma \ref{duality Banach value}, there exists  a unique $X$-valued function $f_{j,k} \in L^p (X) (Q_{j,k})$ such that for any $g \in L^{p'} (^\ast X) (Q_{j,k})$,
	\begin{equation*}
		L_{j,k} (g) = \int_{Q_{j,k} } \langle g (x),  f_{j,k} (x) \rangle \d x .
	\end{equation*}
	Thus there exists a $X$-valued function $f \in L^p_{\operatorname{loc}} (X) $ such that $f_{j,k} = f \chi_{Q_{j,k}} $ for each $j \in \mathbb Z, k\in \mathbb Z^n$.  Then for any given $j \in \mathbb Z, k\in \mathbb Z^n$ and for $g \in  L^{p'} (^\ast X) (Q_{j,k})$,
	\begin{equation} \label{L = f}
		L_{j,k} (g)  = \int_{Q_{j,k} } \langle g (x), f (x) \rangle \d x.
	\end{equation}
	Next we show that $f \in M_{p}^{t,r} (X) $. 
	Fix $\epsilon>0$ and a finite set $K \subset \mathbb Z^{n+1}$. For each $(j,k) \in K$, there exists $  g_{j,k,\epsilon}  \in L^{p'} (^\ast X) (Q_{j,k})$ such that $ \left\|  \chi_{Q_{j,k}} \|g_{j,k,\epsilon}\|_{^\ast X }  \right\|_{L^{p'}  }   \le 1 $ and 
	\begin{equation} \label{f le f g jk epsilon}
		\left\|  \chi_{Q_{j,k}}  \|f \|_{X }  \right\|_{L^p } \le (1+\epsilon) \int_{Q_{j,k} } \langle g_{j,k,\epsilon} (x), f (x)  \rangle  \d x .
	\end{equation}
	Note that $ |Q_{j,k}| ^{1/t - 1/p}  g_{j,k,\epsilon} $  is a $(p' , t', {^\ast X}) $-block supported on $Q_{j,k}$  since $ \left\|  \chi_{Q_{j,k}} \|g_{j,k,\epsilon}\|_{^\ast X }  \right\|_{L^{p'}  }   \le 1 $. Take an arbitrary nonnegative
	sequence $\{  \lambda_{j,k}\} \in \ell^{r'} (\mathbb Z^{1+n})$
	supported on $K$ and set
	\begin{equation}\label{g K decom}
		g_{K,\epsilon}  = \sum_{(j,k) \in K}  \lambda_{j,k}  |Q_{j,k}| ^{1/t - 1/p}  g_{j,k,\epsilon}  \in \mathcal{H}_{p'}^{t',r'} (^\ast X) .
	\end{equation}
	Then  from (\ref{L = f}), (\ref{L jk}), (\ref{f le f g jk epsilon}), the fact that $K$ is a finite set, and the linearity of $L$, we get 
	\begin{align*}
		&	\sum_{(j,k) \in K}  \lambda_{j,k}  |Q_{j,k}| ^{1/t - 1/p} \left\|  \chi_{Q_{j,k}}  \|f \|_{X }  \right\|_{L^p } \\
		& \le  (1+\epsilon) \sum_{(j,k) \in K}  \lambda_{j,k}  |Q_{j,k}| ^{1/t - 1/p} \int_{Q_{j,k} } \langle g_{j,k,\epsilon} (x), f (x)  \rangle  \d x \\
		& =  (1+\epsilon) \int_\rn  \langle g_{K,\epsilon} , f(x ) \rangle  \d x \\
		& = (1+\epsilon) L (g_{K,\epsilon}  ).
	\end{align*}
	By the decomposition (\ref{g K decom}) and $L$ is a continuous linear functional on $ \mathcal{H}_{p'}^{t',r'} (^\ast X)$, we obtain
	\begin{equation*}
		L (g_{K,\epsilon}  )  \le  \| L \|_{ \mathcal{H}_{p'}^{t',r'} (^\ast X) \to \mathbb R }  \| g_{K,\epsilon} \|_{  \mathcal{H}_{p'}^{t',r'} (^\ast X) } \le \| L \|_{ \mathcal{H}_{p'}^{t',r'} (^\ast X) \to \mathbb R } \left( \sum_{(j,k) \in K}   \lambda_{j,k} ^ {r'} \right)^{1/r'}.
	\end{equation*}
	Since $r >1$ and $K  \subset \mathbb Z^{1+n}$ and $ \{\lambda_{j,k} \}_{(j,k) \in K } $ are arbitrary, we conclude that
	\begin{align*}
		& \| f \|_{M_{p}^{t,r} (X) }  \\
		& = \sup \left\{  \sum_{ (j,k) \in \mathbb Z^{1+n}  }  \lambda_{j,k} |Q_{j,k} |^{1/t-1/p}  	\left\|  \chi_{Q_{j,k}}  \|f \|_{X}  \right\|_{L^p } :  \| \{\lambda_{j,k} \} \|_{\ell^{r' } (\mathbb Z^{1+n})}  = 1     \right\} \\
		&  \le  (1+\epsilon)\| L \|_{ \Big(  \mathcal{H}_{p'}^{t',r'} (^\ast X) \Big)^*  }  < \infty .
	\end{align*}
	Thus $f \in M_{p}^{t,r} (X)$. Let $\epsilon \to 0^{+}$, we have $\| f \|_{M_{p}^{t,r} (X) } \le \| L \|_{ \Big(  \mathcal{H}_{p'}^{t',r'} (^\ast X) \Big)^*  } $. Together (i), we obtain $\| f \|_{M_{p}^{t,r} (X) } = \| L \|_{ \Big(  \mathcal{H}_{p'}^{t',r'} (^\ast X) \Big)^*  } $. 
	
	Next we prove that $L$ arises as in (\ref{Jf}) with $f \in  M_{p}^{t,r} (X)$. For any $b \in  \mathcal{H}_{p'}^{t',r'} (^\ast X)$, there exist sequences $ \{ \lambda_{j,k}\}_{(j,k) \in \mathbb Z^{1+n}} \subset \mathbb R$  and  $\{ b_{j,k}  \}_{(j,k) \in \mathbb Z^{1+n} }$ where  $b_{j,k}$ a  $(p',t', {^\ast X})$-block supported on $Q_{j,k}$ such that 
	\begin{equation}
		b = \sum_{ (j,k) \in \mathbb Z^{1+n} } \lambda_{j,k} b_{j,k}
	\end{equation}
	almost everywhere on $\rn$  and
	\begin{equation*}
		\left(  \sum_{ (j,k) \in \mathbb Z^{1+n} }  | \lambda_{j,k} |^{r'}\right)^{1/r'}  \le (1+\epsilon) \|b\|_{ \mathcal{H}_{p'}^{t',r'} (^\ast X) } .
	\end{equation*}
	For any given $N \in \mathbb N$, let 
	\begin{equation*}
		b_N : = \sum_{ |j| \le N }  \sum_{ |k| \le N } \lambda_{j,k} b_{j,k};
	\end{equation*}
	here and in what follows, for any $k : = (k_1, \ldots, k_n) \in \mathbb Z^n  $, $ |k| = |k_1| + \ldots + |k_n|$. Since $r ' \in [1,\infty) $, for any $\eta >0$, there exists $K \in \mathbb N$ such that, for $N >K$, we have
	\begin{equation*}
		\| b-b_N\|_{\mathcal{H}_{p'}^{t',r'} (^\ast X)} \le  \left(  \sum_{ |j| > N }  \sum_{ |k| \le N }  | \lambda_{j,k} |^{r'}\right)^{1/r'}  +  
		\left(   \sum_{ |j| \le N }  \sum_{ |k| > N  }  \lambda_{j,k} |^{r'} \right)^{1/r'}  <\eta.
	\end{equation*}
	Thus, $b_N \to b$ in $\mathcal{H}_{p'}^{t',r'} (^\ast X)$ as $N \to \infty$. From this, both the continuity and the linearity of $L$,  (\ref{L = f}),  we get, for each $b \in \mathcal{H}_{p'}^{t',r'} (^\ast X)$,
	\begin{align}\label{L(b) = lim b_N}
		\nonumber
		L(b) & = \lim_{N \to \infty} L(b_N) = \lim_{N \to \infty}  \sum_{ |j| \le N }  \sum_{ |k| \le N } \lambda_{j,k} L ( b_{j,k} ) \\
		& =  \lim_{N \to \infty}  \sum_{ |j| \le N }  \sum_{ |k| \le N } \lambda_{j,k} \int_\rn \langle b_{j,k} (x), f (x)  \rangle \d x =  \lim_{N \to \infty} \int_\rn \langle b_N (x)  , f (x) \rangle \d x.
	\end{align} 
	Similarly to the estimation of (\ref{f g le M H}), using the fact that $b_N \to b$ in $\mathcal{H}_{p'}^{t',r'} (^\ast X)$ as $N \to \infty$, we obtain
	\begin{align*}
	&	\lim_{N \to \infty} \left|   \int_\rn \langle b (x) ,  f(x) \rangle \d x - \int_\rn \langle b_N (x) , f(x) \rangle \d x  \right| \\
	&  \le \lim_{N \to \infty}  \|b-b_N\|_{ \mathcal{H}_{p'}^{t',r'} (^\ast X)}  \| f\|_{M_{p}^{t,r} (X) }  =0.
	\end{align*}
	From this and (\ref{L(b) = lim b_N}), we obtain $L$ arises as in (\ref{Jf}) with $f \in M_{p}^{t,r} (X)$.
	
	Next we show that $f$ is unique. Suppose that there exists another $\tilde f \in M_{p}^{t,r} (X)$ such that $L$ arises as in (\ref{Jf}) with $f $ replaced by $\tilde f$.
	Since
	\begin{equation*}
		\| \tilde f (x) - f(x)\|_{X} = \sup_{  \| h\|_{^\ast X} =1 } \langle  h ,   \tilde f (x)- f(x)\rangle, 
	\end{equation*}
	for fixed $\epsilon>0$, there exists a $^\ast X$-valued function $g(x)$ with $\| g(x) \|_{ ^\ast X } =1$ such that 
	\begin{equation*}
		\|\tilde f (x) - f(x)\|_{X}  \le (1+\epsilon) \langle  g (x), \tilde f(x ) - f(x)   \rangle.
	\end{equation*}
	Note that 
	\begin{equation*}
		\| \chi_Q \|g\|_{^\ast X} \|_{L^{p'}} = \left( \int_Q  \|g (x) \|_{^\ast X}^p \d x \right)^{1/p} = |Q|^{1/p}.
	\end{equation*}
	So $\chi_Q g $ is a $ (p', t', {^\ast X}) $-block supported on $Q$ modulo a multiplicative constant. Thus
	\begin{align*}
		0 = \int_Q \langle  g (x), \tilde f(x ) - f(x) \rangle \d x \ge \frac{1}{1+\epsilon} \int_Q \|\tilde f (x) - f(x)\|_{X} \d x .
	\end{align*}
	This, combined with the arbitrariness of $Q\in \D$, further implies that $ \tilde{f} = f $ almost everywhere.

	Using  \cite[Theorem 87, Existence of the norm attainer]{SFH20}, (\ref{H = max M le 1}) is included in what we have proven.
	Thus we complete the proof.
\end{proof}

The block space has  translation invariance. This property can be used to show that the convolution operator with $L^1$-functions is bounded in block space. 
\begin{lemma} \label{trans invar H}
	Let $X$ be a Banach space  such that $  ( ^\ast X )^ \ast = X $. Let $^\ast X$ be reflexive or $X$ be separable.
	Let $1< p<t<r<\infty$
	or $1< p\le t<r=\infty$.
	For any $y \in \rn$, $f\in \H_{p'}^{t',r'}(^\ast X)$, we have
	\begin{equation*}
		\| \tau_y f\|_{ \H_{p'}^{t',r'}(^\ast X)  }  \lesssim 	\| f \|_{ \H_{p'}^{t',r'}(^\ast X)  }.
	\end{equation*}
	Here and below,  $\tau_y f   =  f (\cdot -y)  $.
\end{lemma}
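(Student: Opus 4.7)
My plan is to reduce translation invariance of $\H_{p'}^{t',r'}({^\ast X})$ to the (easier) translation invariance of its dual $M_p^{t,r}(X)$, via the norm-attainment identity (\ref{H = max M le 1}) from Theorem \ref{predual seq BM}. The substitution $u = x - y$ in the duality pairing gives
\[
\int_{\rn} \langle \tau_y g(x), f(x) \rangle \d x = \int_{\rn} \langle g(u), \tau_{-y} f(u) \rangle \d u
\]
for every $g \in \H_{p'}^{t',r'}({^\ast X})$ and every $f \in M_p^{t,r}(X)$. So once I prove a bound of the form $\|\tau_{-y} f\|_{M_p^{t,r}(X)} \le C_n \|f\|_{M_p^{t,r}(X)}$ with a dimensional constant $C_n$, (\ref{H = max M le 1}) immediately yields $\|\tau_y g\|_{\H_{p'}^{t',r'}({^\ast X})} \le C_n \|g\|_{\H_{p'}^{t',r'}({^\ast X})}$, as desired.

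To establish translation invariance of $M_p^{t,r}(X)$, I proceed scale by scale. For each fixed $v \in \mathbb Z$, there exists $a(v) \in \mathbb Z^n$, depending only on $v$ and $y$, such that for every $m \in \mathbb Z^n$ the translate $Q_{v,m} + y$ is covered by the $2^n$ dyadic cubes $\{Q_{v,\, m + a(v) + \epsilon} : \epsilon \in \{0,1\}^n\}$ at the same scale. Combined with the concavity of $s \mapsto s^{1/p}$ on $[0,\infty)$ (valid since $p \ge 1$), this yields
\[
\Bigl(\int_{Q_{v,m} + y} \|f(x)\|_X^p \d x\Bigr)^{1/p} \le \sum_{\epsilon \in \{0,1\}^n} \Bigl(\int_{Q_{v,\, m + a(v) + \epsilon}} \|f(x)\|_X^p \d x\Bigr)^{1/p}.
\]
Since $|Q_{v,m}| = |Q_{v,\, m + a(v) + \epsilon}|$, I multiply through by $|Q_{v,m}|^{1/t - 1/p}$, take the $\ell^r$ norm over $(v,m) \in \mathbb Z \times \mathbb Z^n$, and apply Minkowski's inequality. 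For each fixed $\epsilon \in \{0,1\}^n$ the map $(v,m) \mapsto (v,\, m + a(v) + \epsilon)$ is a bijection of $\mathbb Z \times \mathbb Z^n$, so the resulting reindexed $\ell^r$ norm equals $\|f\|_{M_p^{t,r}(X)}$. Summing over the $2^n$ values of $\epsilon$ gives $\|\tau_{-y} f\|_{M_p^{t,r}(X)} \le 2^n \|f\|_{M_p^{t,r}(X)}$, which combined with the first paragraph completes the proof.

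The main expected difficulty is purely combinatorial bookkeeping, namely pinning down the offset $a(v)$ and verifying that the reindexing $(v,m) \mapsto (v,\, m+a(v)+\epsilon)$ is a bijection preserving the $\ell^r$-norm on $\mathbb Z \times \mathbb Z^n$, rather than any genuine analytic obstacle; the duality identity (\ref{H = max M le 1}) sidesteps any direct manipulation of block decompositions. As an aside, the same bound can be proved intrinsically by translating each block $b_{j,k}$ in an atomic decomposition of $g$ and observing that $\tau_y b_{j,k}$ splits into at most $2^n$ pieces, each a constant multiple of a $(p',t',{^\ast X})$-block at the same dyadic level, but the duality route avoids reassembling the coefficients and keeps the argument substantially shorter.
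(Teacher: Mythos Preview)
Your approach is genuinely different from the paper's: the paper argues \emph{intrinsically} on the block decomposition (translate each block, cover its support by at most $2^n$ dyadic cubes at the same scale, and read off the bound $2^{n/r'}$ directly on the $\ell^{r'}$ coefficients), whereas you dualize via (\ref{H = max M le 1}) and transfer the problem to translation invariance of $M_p^{t,r}(X)$, which you prove cleanly. Your scale-by-scale covering argument for $M_p^{t,r}(X)$ is correct, and the change of variables in the pairing is legitimate because $g\in\H_{p'}^{t',r'}({^\ast X})$ and $\tau_{-y}f\in M_p^{t,r}(X)$, so (i) of Theorem~\ref{predual seq BM} applies.

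There is, however, a small logical gap. The identity (\ref{H = max M le 1}) is stated and proved only for elements already known to lie in $\H_{p'}^{t',r'}({^\ast X})$; it is the Hahn--Banach norm formula for a Banach space whose dual has been identified. Your computation shows that $\sup_{\|f\|_{M_p^{t,r}(X)}\le 1}\bigl|\int\langle\tau_y g,f\rangle\bigr|<\infty$, i.e.\ that $\tau_y g$ induces a bounded functional on $M_p^{t,r}(X)$, but this by itself does not place $\tau_y g$ in the predual $\H_{p'}^{t',r'}({^\ast X})$ --- that implication is precisely the content of the Fatou-type corollary proved later in Section~\ref{property block}, or of Theorem~\ref{dual BMX} under the extra hypothesis that $X$ is reflexive. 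Neither is available at this point. The fix is exactly the ``aside'' you relegated to the end: a one-line observation that $\tau_y b_{j,k}$ is a sum of at most $2^n$ blocks yields $\tau_y g\in\H_{p'}^{t',r'}({^\ast X})$, after which (\ref{H = max M le 1}) applies and your duality argument gives the norm bound. So you cannot entirely sidestep the atomic decomposition; you need it for membership, though not for the quantitative estimate. The paper's direct route handles both at once and yields the slightly sharper constant $2^{n/r'}$.
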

\begin{proof}
	Let $f \in\H_{p'}^{t',r'}(^\ast X) $. Then there exist a sequence $\{ \lambda_{j,k} \}_{ (j,k) \in \mathbb Z ^{1+n} }  \in \ell ^{r'} $  and a sequence  $(p',t',{^\ast X})$-block $\{  b_{j,k} \}  _{ (j,k) \in \mathbb Z ^{1+n} } $  supported on  $Q_{j,k}   $
	such that $f 	=\sum_{(j,k)\in\mathbb{Z}^{n+1}}\lambda_{j,k}b_{j,k} $  and $\| \lambda_{j,k} \|_{\ell ^{r'}}  \le \| f\|_{\H_{p'}^{t',r'}(^\ast X)  }    +\epsilon$.
	Then 
	\begin{equation*}
		\tau_y f (x) = \sum_{(j,k)\in\mathbb{Z}^{n+1}}\lambda_{j,k}  \tau_y b_{j,k} (x).
	\end{equation*}
	$\tau_y b_{j,k}  = b_{j,k} (\cdot -y) $ is supported on $ Q_{j,k} +y := \{ z +y : z \in Q_{j,k}  \} $. There are at most $2^n$ cubes $Q_{i, j, k} \in \D _ j$ (we use there indexes $i, j, k$ to mean that $Q_{i, j, k}$ is related to cube $Q_{j,k}$) such that
	\begin{equation*}
		Q_{j,k} +y \subset  \bigcup_{i=1}^{2^n} Q_{i,j,k}.
	\end{equation*}
	Hence 
	\begin{equation*}
		\tau_y f (x) = \sum_{(j,k)\in\mathbb{Z}^{n+1}} \sum_{i=1}^{2^n} \lambda_{j,k} b_{j,k} (x -y ) \chi_{ Q_{i, j, k} } (x).
	\end{equation*}
	The function $b_{j,k} (\cdot -y ) \chi_{ Q_{i, j, k} }$ is supported on $Q_{i, j, k} \in \D _ j $ and $ \|b_{j,k} (\cdot -y ) \chi_{ Q_{i, j, k} } \|_{L^{p'}  (^\ast X) }  \le \| b_{j,k}\|_{ L^{p'}  (^\ast X) }  \le |Q_{j,k}|^{  1/t -1/p}  =   | Q_{i, j, k}|^{  1/t -1/p}   $. Hence  $b_{j,k} (\cdot -y ) \chi_{ Q_{i, j, k} }$ is a $(p',t', {^\ast X})$-block.
	Hence we obtain
	\begin{equation*}
		\| 	\tau_y f \| _{ \mathcal{H}_{p'}^{t',r'} (^\ast X) }    \le  2^{n/r'} \| \lambda_{j,k} \|_{\ell ^{r'}}  \le 2^{n/r'} \left( \| f\|_{\H_{p'}^{t',r'}(\ell^{q'})  }    +\epsilon \right) .
	\end{equation*}
	Letting $\epsilon \downarrow 0$, we obtain the result. 
\end{proof}

\begin{remark} \label{tri-angle}
	Using Theorem \ref{predual seq BM}, we have
	\begin{align*}
	&	\| f +g \|_{ \H_{p'}^{t',r'}(^\ast X) }\\
	 & = \max_{ \|h\|_ { M _p^{t,r} (X) }  \le 1  } \left| \int_\rn \langle (f (x)+g (x) ), h (x) \rangle \d x \right| \\
		& \le  \max_{ \|h\|_ { M _p^{t,r} (X)   }  \le 1  } \left| \int_\rn \langle f (x) , h (x)\rangle \d x \right| +  \max_{ \|w\|_ { M _p^{t,r}  (X)  }  \le 1  } \left| \int_\rn \langle g (x), w (x) \rangle \d x \right|  \\
		& = 	\| f  \|_{ \H_{p'}^{t',r'}(^\ast X) } +\| g  \|_{ \H_{p'}^{t',r'}(^\ast X) }.
	\end{align*}
\end{remark}
For scalar function $g \in L^1 (\rn)$ and $^\ast X$-valued function $f$,  we define the convolution of $f$ and $g$ by
\begin{equation*}
	f * g (x) = \int_\rn  g(x-y)f(y) \d y.
\end{equation*} 
Using Lemma \ref{trans invar H} and the triangle inequality for $ \H_{p'}^{t',r'}(^\ast X) $ (Remark \ref{tri-angle}), we obtain the following convolution inequality.

\begin{corollary}
	Let $X$ be a Banach space  such that $  ( ^\ast X )^ \ast = X $. Let $^\ast X$ be reflexive or $X$ be separable.
	Let $1< p<t<r<\infty$
	or $1< p\le t<r=\infty$. Then for all scalar functions $g \in L^1 (\rn)$ and $f \in \H_{p'}^{t',r'}(^\ast X)$, we have
	\begin{equation*}
		\|  g *f\|_{  \H_{p'}^{t',r'}(^\ast X)  }  \lesssim \|g\|_{L^1}  \| f\|_{  \H_{p'}^{t',r'}(^\ast X)  },
	\end{equation*}
	where the  implicit constant is the same as Lemma \ref{trans invar H}. 
\end{corollary}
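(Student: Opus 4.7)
The plan is to realize the convolution as a limit of discrete $L^1$-weighted sums of translates of $f$, and then combine Remark \ref{tri-angle} (the triangle inequality in the block space) with Lemma \ref{trans invar H} (translation invariance) on each approximant. The starting point is the identity
\begin{equation*}
	(g*f)(x)=\int_\rn g(y)(\tau_y f)(x)\,\d y \quad \text{for a.e. } x \in \rn.
\end{equation*}

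I would first handle $g\in C_c(\rn)$. Fix a cube $Q$ containing $\operatorname{supp}(g)$, partition it into finitely many disjoint measurable pieces $R_j^{(N)}$ of diameter at most $1/N$, choose $y_j^{(N)}\in R_j^{(N)}$, and form the Riemann-sum approximant
\begin{equation*}
	S_N(x):=\sum_j g(y_j^{(N)})\,|R_j^{(N)}|\,(\tau_{y_j^{(N)}}f)(x).
\end{equation*}
Iterated application of Remark \ref{tri-angle} followed by Lemma \ref{trans invar H} on each term gives
\begin{equation*}
	\|S_N\|_{\H_{p'}^{t',r'}({}^\ast X)} \le 2^{n/r'}\|f\|_{\H_{p'}^{t',r'}({}^\ast X)}\sum_j |g(y_j^{(N)})|\,|R_j^{(N)}|,
\end{equation*}
and the Riemann sum on the right converges to $\|g\|_{L^1}$ as $N\to\infty$. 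To transfer the bound from $S_N$ to $g*f$, I would pair against an arbitrary $h\in M_p^{t,r}(X)$ with $\|h\|_{M_p^{t,r}(X)}\le 1$: Banach-valued dominated convergence, with integrable majorant $|g(y)|\|f(x-y)\|_{{}^\ast X}\|h(x)\|_X$ (justified by the compact support of $g$, local integrability of $f$ from Proposition \ref{block sum converge}(ii), and $h\in L^p_{\operatorname{loc}}(X)$), yields $\langle S_N,h\rangle\to\langle g*f,h\rangle$. Hence $|\langle g*f,h\rangle|\le\liminf_N\|S_N\|_{\H_{p'}^{t',r'}({}^\ast X)}\|h\|_{M_p^{t,r}(X)}$, and Theorem \ref{predual seq BM} closes the case $g\in C_c(\rn)$.

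For general $g\in L^1(\rn)$, I would approximate by $g_n\in C_c(\rn)$ with $g_n\to g$ in $L^1$. Applying the established bound to the scalar function $g_n-g_m$ shows that $(g_n*f)_{n\in\mathbb{N}}$ is Cauchy in the block space (which is complete by results of Section \ref{property block}), with limit coinciding a.e.\ with $g*f$ and inheriting the same estimate. I expect the main obstacle to be the rigorous justification of $\langle S_N,h\rangle\to\langle g*f,h\rangle$; pairing against $h$ \emph{before} taking the limit --- rather than trying to interpret $\int g(y)\tau_y f\,\d y$ directly as a Bochner integral in the possibly non-separable space $\H_{p'}^{t',r'}({}^\ast X)$ --- sidesteps strong-measurability issues and reduces everything to ordinary scalar dominated convergence.
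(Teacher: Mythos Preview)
Your approach works, but the paper's argument is considerably more direct. Having paired with $h\in M_p^{t,r}(X)$ (exactly as you propose at the end), the paper applies Fubini at once to obtain
\[
\langle g*f,h\rangle=\int_\rn g(y)\,\langle\tau_y f,h\rangle\,\d y,
\]
bounds $|\langle\tau_y f,h\rangle|\le\|\tau_y f\|_{\H_{p'}^{t',r'}(^\ast X)}\|h\|_{M_p^{t,r}(X)}$ via (\ref{H = max M le 1}), and finishes with Lemma \ref{trans invar H}. No Riemann sums, no passage to the limit, and no separate $C_c$-density step are needed: the discretization you introduce is essentially a by-hand substitute for the Minkowski-type integral inequality that Fubini plus duality delivers in one line. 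What your route would buy in principle is avoidance of any a~priori integrability hypothesis for Fubini; but that hypothesis is available anyway, and in fact your own stated justification of the majorant (``compact support of $g$, local integrability of $f$, $h\in L^p_{\operatorname{loc}}(X)$'') is insufficient, since the $x$-integral runs over all of $\rn$, not a compact set. The correct bound is
\[
\int_\rn\|f(x-y)\|_{^\ast X}\|h(x)\|_X\,\d x\le\|\tau_y f\|_{\H_{p'}^{t',r'}(^\ast X)}\|h\|_{M_p^{t,r}(X)},
\]
which follows from the computation (\ref{f g le M H}) inside the proof of Theorem \ref{predual seq BM}; combined with Lemma \ref{trans invar H} this is uniform in $y$, and it is precisely what legitimizes the paper's direct use of Fubini.
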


\begin{proof}
	Let  $g \in L^1 (\rn)$ and $f \in \H_{p'}^{t',r'}(^\ast X)$. Then by Theorem \ref{predual seq BM} and Lemma \ref{trans invar H},  we obtain
	\begin{align*}
		\|  g *f\|_{  \H_{p'}^{t',r'}(^\ast X)  } & = \left\| \int_{\rn} g (y)  f (\cdot-y) \d y        \right\|_{  \H_{p'}^{t',r'}(^\ast X)  } \\
		&  = \max_{ \|h\|_{ M_p^{t,r} (X)  }  \le  1  }   \left| \int_\rn  \left\langle  \int_{\rn} g (y)  f (x-y) \d y , h(x)  \right\rangle  \d x \right| \\
		&= \max_{ \|h\|_{ M_p^{t,r}  (X) }  \le  1  }   \left| \int_\rn g (y)  \int_{\rn}  \langle  f (x-y), h(x) \rangle  \d x  \d y    \right| \\
		& \le   \left| \int_\rn g (y)  \max_{ \|h\|_{ M_p^{t,r} (X)  }  \le  1  } \int_{\rn} \langle  f (x-y),   h(x)  \rangle \d x \d y    \right| \\
		& \le  \int_{\rn}  |g(y)| \left\|    f (\cdot-y)        \right\|_{  \H_{p'}^{t',r'}(^\ast X)  }  \d y \\
		& \lesssim \left\|    f         \right\|_{  \H_{p'}^{t',r'}(^\ast X)  } \int_{\rn}  |g(y)|   \d y. \qedhere
	\end{align*}
\end{proof}

\section{Properties of  the block space}\label{property block}

In this section, we  first discuss completeness,  denseness and Fatou property of 
the function space $\mathcal{H}_{p'}^{t',r'}(^\ast X) $.  Since we only suppose that  $^\ast X$ is reflexive  (or $X$ is a separable Banach space), it is difficult to show whether  $^\ast X$-valued block spaces $\mathcal{H}_{p'}^{t',r'}(^\ast X) $ have  the lattice property.
But, if $X = \ell^q, q\in (1,\infty)$, we obtain block spaces $\mathcal{H}_{p'}^{t',r'}(\ell^{q'}) $ have  the lattice property.

\subsection{The completeness of the block space} \label{banach space}

We first show the completeness.

\begin{theorem} \label{Banach}
	Let $X$ be a Banach space  such that $  ( ^\ast X )^ \ast = X $. Let $^\ast X$ be reflexive or $X$ be separable.
	Let $1< p<t<r<\infty$ 	or $1<p\le t<r=\infty$. Then $\H_{p'}^{t',r'}(^\ast X) $ is a Banach space.
\end{theorem}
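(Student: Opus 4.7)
The plan is to first verify the norm axioms for $\|\cdot\|_{\H_{p'}^{t',r'}(^\ast X)}$ and then to establish completeness via the standard criterion that every absolutely summable series must converge. The pivotal enabling fact is Proposition \ref{block sum converge}, which ensures absolute $L^1_{\operatorname{loc}}(^\ast X)$-convergence of any admissible block sum and therefore permits interchanging the order of summation by Tonelli's theorem.

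Homogeneity of the norm is immediate, and the triangle inequality is precisely Remark \ref{tri-angle}, a consequence of the duality identity \eqref{H = max M le 1} from Theorem \ref{predual seq BM}. For positive definiteness, if $\|f\|_{\H_{p'}^{t',r'}(^\ast X)}=0$, I would take block decompositions of $f$ with arbitrarily small $\ell^{r'}$-coefficient norm; the estimate in the proof of Proposition \ref{block sum converge} then gives $\int_Q \|f(x)\|_{^\ast X}\,\d x = 0$ for every dyadic cube $Q$, whence $f = 0$ almost everywhere.

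For completeness, I pick a series $\sum_n g_n \subset \H_{p'}^{t',r'}(^\ast X)$ with $\sum_n \|g_n\|_{\H_{p'}^{t',r'}(^\ast X)} < \infty$ and, for each $n$, select a decomposition $g_n = \sum_{(j,k) \in \mathbb{Z}^{n+1}} \lambda^{(n)}_{j,k} b^{(n)}_{j,k}$ with $\|\lambda^{(n)}\|_{\ell^{r'}} \le 2\|g_n\|_{\H_{p'}^{t',r'}(^\ast X)}$. Setting $\mu_{j,k} := \sum_n |\lambda^{(n)}_{j,k}|$, Minkowski's inequality in $\ell^{r'}$ yields $\|\mu\|_{\ell^{r'}} \le \sum_n \|\lambda^{(n)}\|_{\ell^{r'}} < \infty$. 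For each fixed $(j,k)$ with $\mu_{j,k} > 0$ the inner series $u_{j,k} := \sum_n \lambda^{(n)}_{j,k} b^{(n)}_{j,k}$ converges absolutely in $L^{p'}(Q_{j,k}, {^\ast X})$ to a function with $L^{p'}$-norm at most $\mu_{j,k} |Q_{j,k}|^{1/t - 1/p}$, so $c_{j,k} := u_{j,k}/\mu_{j,k}$ is a $(p',t',{^\ast X})$-block on $Q_{j,k}$. Consequently $g := \sum_{(j,k)} \mu_{j,k} c_{j,k}$ lies in $\H_{p'}^{t',r'}(^\ast X)$ with norm at most $\|\mu\|_{\ell^{r'}}$, and applying the identical combination procedure to the tail $\{g_n\}_{n > N}$ yields $\|g - \sum_{n=1}^N g_n\|_{\H_{p'}^{t',r'}(^\ast X)} \le 2\sum_{n > N} \|g_n\|_{\H_{p'}^{t',r'}(^\ast X)} \to 0$.

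The main obstacle will be rigorously identifying the abstract limit $g$ with the pointwise sum $\sum_n g_n$, that is, verifying the identity $g - \sum_{n=1}^N g_n = \sum_{n > N} g_n$ as almost everywhere defined $^\ast X$-valued functions rather than merely as symbolic sums. This is settled by invoking Proposition \ref{block sum converge} a second time: since $\|\mu\|_{\ell^{r'}} < \infty$, the double sum $\sum_n \sum_{(j,k)} |\lambda^{(n)}_{j,k}|\,\|b^{(n)}_{j,k}(x)\|_{^\ast X}$ is absolutely convergent in $L^1_{\operatorname{loc}}$, so Tonelli's theorem licenses swapping $\sum_n$ and $\sum_{(j,k)}$, producing the required pointwise a.e.\ identity and completing the argument.
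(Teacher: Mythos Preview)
Your proof is correct and follows the same overall strategy as the paper: both establish completeness via the criterion that every absolutely summable series converges, both decompose each term $g_n$ into blocks, and both use Proposition~\ref{block sum converge} (or its proof) to control the resulting double sum in $L^1_{\operatorname{loc}}(^\ast X)$ and identify the limit pointwise.

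The execution differs in one respect worth noting. The paper keeps the blocks $b_{i,j,k}$ doubly indexed, shows $\sum_{i,j,k}\lambda_{i,j,k}b_{i,j,k}\to f$ in $L^1_{\operatorname{loc}}(^\ast X)$, passes to an a.e.\ convergent subsequence, and then reads off the $\H$-norm bound from $\sum_i\|\{\lambda_{i,j,k}\}_{j,k}\|_{\ell^{r'}}$. You instead collapse the $n$-index at each cube by setting $\mu_{j,k}=\sum_n|\lambda^{(n)}_{j,k}|$ and $c_{j,k}=\mu_{j,k}^{-1}\sum_n\lambda^{(n)}_{j,k}b^{(n)}_{j,k}$, which immediately produces a single admissible block decomposition of the limit and uses only Minkowski in $\ell^{r'}$ plus a Tonelli swap. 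This is marginally cleaner: it avoids the local-in-measure and subsequence-extraction step and makes the re-indexing required by Definition~\ref{def block space} (one block per dyadic cube) explicit rather than implicit. The paper's route, on the other hand, makes the intermediate $L^1_{\operatorname{loc}}$ convergence more visible. Neither gains any real generality over the other.
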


\begin{proof}
	We only  prove the completeness  since others are easy. 
	
	Let $ \{ f_i\}_{i\in \mathbb N}$  be a sequence such that for each $i \in \mathbb N$, $f_i \in \H_{p'}^{t',r'}(^\ast X)  $, $ \sum_{i\ge 1} \| f_i \|_{ \H_{p'}^{t',r'}(^\ast X)  }  <\infty  $.
	From  \cite[Theorem 2.17]{M16} (scalar version of Theorem \ref{predual seq BM}),
	for any ball $B$, we have
	\begin{equation*}
		\int_{B} \sum_{i \ge 1} \| f_i (y) \|_{^\ast X } \d y =\sum_{i \ge 1} \int_{B}  \| f_i (y) \|_{^\ast X } \d y \le \| \chi_B \|_{  M _p^{t,r}   }  \sum_{i \ge 1} \|    f_i \|_{ 	\H_{p'}^{t',r'}(^\ast X )  }.
	\end{equation*}
	Form $\|  \sum_{i \ge 1}  f_i \|_{ ^\ast X }   \le \sum_{i \ge 1} \| f_i  \|_{^\ast X  }$,
	we obtain  $ f = \sum_{i \ge 1}  f_i  $ is a well defined, $^\ast X $-valued Lebesgue measurable function and  $f\in  L^1_{\operatorname{loc}} ( ^\ast X ) $. So we only need to show that  $ f = \sum_{i \ge 1}  f_i  $ belongs to $\H_{p'}^{t',r'}(^\ast X ) $.
	Fix $\epsilon > 0$. There exists a positive integer $N_0$ such that for all $N \ge N_0$, there holds
	\begin{equation*}
		\sum_{i \ge N } \|    f_i \|_{ 	\H_{p'}^{t',r'}( ^\ast X )  }  <\epsilon.
	\end{equation*}
	For this $\epsilon > 0$, there exists a decomposition 
	\begin{equation*}
		f _i=  \sum_{(j,k)\in \mathbb Z ^{n+1}} \lambda_{i,j,k} b_{i,j,k},
	\end{equation*}
	where  $b_{i,j,k}$ is a  $(p',t', ^\ast X )$-block supported on $Q_{j,k}$ and 
	\begin{equation*}
		\left(    \sum_{ (j,k)\in \mathbb Z ^{n+1}  }   | \lambda_{i,j,k}| ^{r'} \right)^{1/r'}  \le (1+\epsilon)  \|    f_i \|_{ 	\H_{p'}^{t',r'}(^\ast X )  }.
	\end{equation*}
	Furthermore, for any $ 1\le i \le N_0 $, there exists a index set $ M _i  \subset  \mathbb Z ^{n+1} $  such that 
	\begin{equation*}
		\bigg\|  f_i  - \sum_{(j,k)\in M_i} \lambda_{i,j,k} b_{i,j,k}  \bigg\|_{ 	\H_{p'}^{t',r'}(^\ast X )  } \le \bigg(  \sum_{(j,k)\in   \mathbb Z ^{n+1}  \backslash  M_i} |\lambda_{i,j,k}|^{r'} \bigg) ^{1/r'} < 2^{-i} \epsilon.
	\end{equation*}
	Therefore, for all ball $B$, 
	\begin{align*}
		&	\int_B \bigg\| f(y)  -  \sum_{i=1}^{N_0}  \sum_{(j,k)\in M_i} \lambda_{i,j,k} b_{i,j,k} (y) \bigg\|_{^\ast X }  \d y \\
		& \le \int_B  \bigg\| f(y) -\sum_{i=1}^{N_0} f_i (y)  \bigg\|_{^\ast X } \d y + \int_B  \bigg\| \sum_{i=1}^{N_0} f_i (y) - \sum_{i=1}^{N_0}  \sum_{(j,k)\in M_i} \lambda_{i,j,k} b_{i,j,k} (y)  \bigg\|_{^\ast X } \d y  \\
		& \le  \int_B  \bigg\| \sum_{i=N_0+1}^{\infty } f_i (y)  \bigg\|_{^\ast X } \d y +  \sum_{i=1}^{N_0} \int_B  \bigg\|  f_i (y) -   \sum_{(j,k)\in M_i} \lambda_{i,j,k} b_{i,j,k} (y)  \bigg\|_{^\ast X } \d y  \\
		& \le \| \chi_B \|_{M_p^{t,r} } \bigg( \sum_{i=N_0+1}^{\infty }  \|f_i\|_{ \H_{p'}^{t',r'}(^\ast X )   }  +   \sum_{i=1}^{N_0}  \bigg\|  f_i  -   \sum_{(j,k)\in M_i} \lambda_{i,j,k} b_{i,j,k}  \bigg\|_{\H_{p'}^{t',r'}(^\ast X )  }  \bigg) \\
		& \le \| \chi_B \|_{M_p^{t,r} } ( \epsilon +\sum_{i=1}^{N_0}  2^{-i} \epsilon  ) \\
		& \lesssim  \| \chi_B \|_{M_p^{t,r} }  \epsilon.
	\end{align*}
	As a consequence,  $\sum_{i\ge 1} \sum_{(j,k)\in \mathbb Z ^{n+1}} \lambda_{i,j,k} b_{i,j,k}$ converges to $f$ in  $L^1_{\operatorname{loc}} (^\ast X ) $.  Hence,  $\sum_{i\ge 1} \sum_{(j,k)\in \mathbb Z ^{n+1}} \lambda_{i,j,k} b_{i,j,k}$ converges to $f$ locally in measure. Therefore, there exists a subsequence of  
	\begin{equation*}
		\left\{\sum_{i= 1}^{N} \sum_{(j,k)\in K \subset \mathbb Z ^{n+1} , \sharp K = M} \lambda_{i,j,k} b_{i,j,k}  \right\}_{ N \in \mathbb N ,M \in \mathbb N}
	\end{equation*}
	converges to $f$ a.e. 
	Recall that $\sharp K$ is the cardinal number of the set $K$.
	Furthermore, $\lambda_{i,j,k}$, $i\in \mathbb N, (j,k) \in \mathbb Z ^{n+1}$  satisfies
	\begin{equation*}
		\sum_{i\ge 1}	\bigg(    \sum_{ (j,k)\in \mathbb Z ^{n+1}  }   | \lambda_{i,j,k}| ^{r'} \bigg)^{1/r'}  \le \sum_{i\ge 1} (1+\epsilon)  \|    f_i \|_{ 	\H_{p'}^{t',r'}(^\ast X )  }.
	\end{equation*}
	That is, $ \sum_{i\ge  1}  f_i$ converges to $f$ in  $	\H_{p'}^{t',r'}(^\ast X)$. Since $\epsilon$ is arbitrary, we obtain
	\begin{equation*}
		\| f\|_{ \H_{p'}^{t',r'}(^\ast X)   }  =  	\bigg\| \sum_{i\ge  1}  f_i \bigg\|_{ \H_{p'}^{t',r'}(^\ast X)   }  \le \sum_{i\ge  1} \|    f_i \|_{ 	\H_{p'}^{t',r'}(^\ast X)  }.
	\end{equation*}
	Then by the well known result that a normed linear space is compete if and only if every absolutely summable sequence is summable (for example, see \cite[Theorem III.3]{RS72}), 	$\H_{p'}^{t',r'}(^\ast X) $ is complete.
\end{proof}

\subsection{Density of $L^{p'}  _c (^\ast X)$  and $C_c^\infty (^\ast X)$}
Let $L^{p'}  _c (^\ast X)$ be  the set of  compactly supported,  $p'$-integrable, $^\ast X$-valued  functions on $\rn$.
Let $C_c^\infty (^\ast X)$ be the smooth, compactly supported, $^\ast X$-valued  functions on $\rn$. 
Next we obtain  $L^{p'}  _c (^\ast X)$  and $C_c^\infty (^\ast X)$ are  dense subspaces of $\H_{p'}^{t',r'}(^\ast X)  $.

\begin{lemma}
	[Proposition 1.2.32, \cite{HVVW16}]  \label{Banach Mollifiers}  Let $X$ be a Banach space.
	Let $f \in L^p (\rn; X )$ for some $p\in [1,\infty)$ and $\phi\in L^1 (\rn)$. For $\epsilon >0$, define $\phi_\epsilon (y) = \epsilon^{-n} \phi (\epsilon^{-1} y) $. Then 
	\begin{equation*}
		\phi_\epsilon * f \to c_\phi  f 
	\end{equation*} 
	in $L^p (\rn; X )$  as $\epsilon \downarrow 0$, where $c_\phi : = \int_\rn \phi (y) \d y.$
\end{lemma}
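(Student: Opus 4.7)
The plan is to reduce the claim to the strong continuity of translations in $L^p(\rn;X)$, which is the vector-valued analogue of the classical scalar fact. First I would write, for a.e.\ $x \in \rn$,
\begin{equation*}
	\phi_\epsilon * f (x) - c_\phi f(x) = \int_\rn \phi(y) \bigl( f(x-\epsilon y) - f(x) \bigr) \d y,
\end{equation*}
using the substitution $z = y/\epsilon$ in the convolution and the definition of $c_\phi$. Applying the vector-valued Minkowski integral inequality (taking the $L^p$-norm in $x$ inside the integral in $y$) yields
\begin{equation*}
	\| \phi_\epsilon * f - c_\phi f \|_{L^p(\rn;X)} \le \int_\rn |\phi(y)| \, \| \tau_{\epsilon y} f - f \|_{L^p(\rn;X)} \d y,
\end{equation*}
where $\tau_y f := f(\cdot - y)$.

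The core of the argument is to show that for any $f \in L^p(\rn;X)$, the map $y \mapsto \tau_y f$ is continuous from $\rn$ into $L^p(\rn;X)$. I would prove this by density. For $f \in C_c(\rn;X)$, translation continuity follows from the uniform continuity of $f$ on compact sets combined with the classical dominated convergence theorem. For general $f \in L^p(\rn;X)$, choose $g \in C_c(\rn;X)$ with $\|f - g\|_{L^p(\rn;X)} < \delta$; using that translation is an $L^p$-isometry, $\|\tau_y f - \tau_y g\|_{L^p(\rn;X)} = \|f - g\|_{L^p(\rn;X)}$, we obtain
\begin{equation*}
	\| \tau_y f - f \|_{L^p(\rn;X)} \le 2\delta + \| \tau_y g - g \|_{L^p(\rn;X)},
\end{equation*}
from which the continuity at $y=0$ (and by the group property, everywhere) follows by sending $\delta \downarrow 0$.

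Finally, I would apply the dominated convergence theorem to the Minkowski bound above. The integrand $|\phi(y)|\, \|\tau_{\epsilon y} f - f\|_{L^p(\rn;X)}$ is dominated uniformly in $\epsilon$ by $2|\phi(y)|\, \|f\|_{L^p(\rn;X)}$, which is integrable thanks to $\phi \in L^1(\rn)$. As $\epsilon \downarrow 0$, translation continuity gives pointwise convergence of the integrand to $0$, so the left-hand side tends to $0$, which is the claim.

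The main obstacle is establishing the density of $C_c(\rn;X)$ in $L^p(\rn;X)$ that underlies translation continuity; this requires some care for a general Banach space $X$. One proves it through simple functions: strongly measurable $X$-valued $L^p$-functions are approximable by $X$-valued simple functions (essential separable-valuedness being automatic from strong measurability); characteristic functions of finite-measure Borel sets are approximable by those of compact sets via inner regularity of Lebesgue measure; and finally these are smoothed by a preliminary convolution with a continuous compactly supported kernel. Once this density is in hand, the remainder of the proof is no more than the scalar argument recast for Bochner integrals.
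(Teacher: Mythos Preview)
The paper does not supply its own proof of this lemma; it is simply quoted from \cite[Proposition 1.2.32]{HVVW16}. Your argument is correct and is precisely the standard proof one finds in that reference: rewrite the difference as an integral over translates, apply the Bochner--Minkowski inequality, invoke strong continuity of translations in $L^p(\rn;X)$ (established via density of $C_c(\rn;X)$), and conclude by dominated convergence with majorant $2|\phi(y)|\,\|f\|_{L^p(\rn;X)}$.
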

\begin{theorem}
	Let $X$ be a Banach space  such that $  ( ^\ast X )^ \ast = X $. Let $^\ast X$ be reflexive or $X$ be separable.
	Let $1 < p<t<r<\infty$ 	or $1< p\le t<r=\infty$. Then $L^{p'}  _c (^\ast X)$ is dense in $\H_{p'}^{t',r'}(^\ast X)  $. In particular, by mollification, $C_c^\infty (^\ast X)$ is dense in $\H_{p'}^{t',r'}(^\ast X)  $.
\end{theorem}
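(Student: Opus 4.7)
The plan is to proceed in two stages: first approximate any $f \in \H_{p'}^{t',r'}(^\ast X)$ by finite truncations of its block decomposition, which automatically lie in $L^{p'}_c(^\ast X)$; then approximate any $g \in L^{p'}_c(^\ast X)$ by its mollifications via Lemma \ref{Banach Mollifiers}, which lie in $C_c^\infty(^\ast X)$.

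For the first stage, fix $f \in \H_{p'}^{t',r'}(^\ast X)$ and $\epsilon > 0$ and write $f = \sum_{(j,k) \in \mathbb Z^{n+1}} \lambda_{j,k} b_{j,k}$ with $(p',t',{^\ast X})$-blocks $b_{j,k}$ supported on $Q_{j,k}$ and $\|\lambda\|_{\ell^{r'}} \le (1+\epsilon)\|f\|_{\H_{p'}^{t',r'}(^\ast X)}$. Since $r > 1$ in both parameter ranges we have $r' < \infty$, so a finite index set $F \subset \mathbb Z^{n+1}$ can be chosen with $\bigl(\sum_{(j,k) \notin F} |\lambda_{j,k}|^{r'}\bigr)^{1/r'} < \epsilon$. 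Setting $f_F := \sum_{(j,k) \in F} \lambda_{j,k} b_{j,k}$, this is a finite sum of functions each in $L^{p'}(^\ast X)$ with compact support (a single dyadic cube), hence $f_F \in L^{p'}_c(^\ast X)$. Meanwhile $f - f_F$ admits the block decomposition indexed by $\mathbb Z^{n+1} \setminus F$, so $\|f - f_F\|_{\H_{p'}^{t',r'}(^\ast X)} < \epsilon$ directly from the definition of the block norm.

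For the second stage, the key ingredient is a local embedding: if $h \in L^{p'}(^\ast X)$ is supported on a single dyadic cube $Q$, then setting $\lambda := |Q|^{1/p - 1/t}\,\|\|h\|_{^\ast X}\|_{L^{p'}}$ makes $h/\lambda$ a $(p',t',{^\ast X})$-block on $Q$, and the one-term block representation $h = \lambda \cdot (h/\lambda)$ yields
\[
\|h\|_{\H_{p'}^{t',r'}(^\ast X)} \le |Q|^{1/p - 1/t}\,\|\|h\|_{^\ast X}\|_{L^{p'}}.
\]
Now given $g \in L^{p'}_c(^\ast X)$ supported in some ball $B_R$, choose $\phi \in C_c^\infty(\rn)$ with $\operatorname{supp}\phi \subset B_1$ and $\int \phi = 1$; for $\epsilon \in (0,1]$, $\phi_\epsilon * g \in C_c^\infty(^\ast X)$ and both $\phi_\epsilon * g$ and $g$ are supported in $B_{R+1}$, which lies inside a fixed dyadic cube $Q$. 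Applying the local embedding to $h := \phi_\epsilon * g - g$ and invoking Lemma \ref{Banach Mollifiers} gives
\[
\|\phi_\epsilon * g - g\|_{\H_{p'}^{t',r'}(^\ast X)} \le |Q|^{1/p-1/t}\,\|\phi_\epsilon * g - g\|_{L^{p'}(^\ast X)} \longrightarrow 0
\]
as $\epsilon \downarrow 0$, completing the density of $C_c^\infty(^\ast X)$ by composing the two stages.

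The only subtle point, and hence the main obstacle worth calling out, is bridging $L^{p'}$-convergence (which is all Lemma \ref{Banach Mollifiers} delivers) to convergence in the block norm. The one-term block trick resolves this cleanly, but it requires careful bookkeeping of the supports of $g$ and $\phi_\epsilon * g$ to keep them inside a common dyadic cube so that the implicit constant $|Q|^{1/p-1/t}$ remains bounded independently of $\epsilon$.
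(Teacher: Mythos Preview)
Your two-stage strategy is correct and Stage~1 (finite truncation of the block decomposition) matches the paper's argument exactly. Stage~2, however, differs: the paper mollifies each block $b_{j,k}$ individually, observes that $\omega_\eta * b_{j,k}$ is supported on $2Q_{j,k}$, and then assembles a new block decomposition for $f_N - \omega_\eta * f_N$ whose coefficient sequence is controlled termwise. You instead mollify the whole truncated function at once and invoke a single local embedding $\|h\|_{\H_{p'}^{t',r'}(^\ast X)} \le |Q|^{1/p-1/t}\|h\|_{L^{p'}(^\ast X)}$ for compactly supported $h$ (this is precisely Lemma~\ref{generate block} in the paper). Your route is more economical: it avoids the index-counting gymnastics of the paper's Stage~2 and reduces everything to one application of Lemma~\ref{Banach Mollifiers}.

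There is one small technicality to patch. You assert that $B_{R+1}$ ``lies inside a fixed dyadic cube $Q$'', but a ball centered at the origin (which is what $B_{R+1}$ denotes in this paper) is never contained in a single dyadic cube, since the origin sits at the corner of $2^n$ dyadic cubes at every scale. The fix is immediate: cover $B_{R+1}$ by $2^n$ dyadic cubes of common side length $\ge 2(R+1)$, split $h = \phi_\epsilon * g - g$ via the corresponding characteristic functions, and apply your one-block estimate to each piece; the triangle inequality (Remark~\ref{tri-angle}) then gives $\|h\|_{\H_{p'}^{t',r'}(^\ast X)} \le 2^n |Q|^{1/p-1/t}\|h\|_{L^{p'}(^\ast X)}$, and the argument goes through unchanged.
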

\begin{proof}
	We use the idea from \cite[Theorem 345]{SFH20}.
	Since $ f \in \H_{p'}^{t',r'}(^\ast X) $, there exist a sequence $\{ \lambda_{j,k} \}_{ (j,k) \in \mathbb Z ^{1+n} }  \in \ell ^{r'} $  and a sequence  $(p',t',^\ast X)$-block $\{  b_{j,k} \}  _{ (j,k) \in \mathbb Z ^{1+n} } $  supported on  $Q_{j,k}   $
	such that $f 	=\sum_{(j,k)\in\mathbb{Z}^{n+1}}\lambda_{j,k}b_{j,k} $  and $\| \{ \lambda_{j,k} \} \|_{\ell ^{r'}}  \le \| f\|_{\H_{p'}^{t',r'}(^\ast X)  }    +\epsilon$.	
	Define 
	\begin{equation} \label{f_N dense}
		f_N = \sum_{|(j,k)|_\infty  \le N}\lambda_{j,k}b_{j,k},
	\end{equation}
	where $|(j,k)|_\infty  = \max  \{ |j|, |k_1|, \ldots, |k_n| \}  $.
	Since $r' \in [1,\infty)$, we have
	\begin{equation*}
		\| f -f_N \|_{ \H_{p'}^{t',r'}(^\ast X) }  \le  \left( \sum_{|j| \le N ,|k|_\infty  > N} | \lambda_{j,k} |^{r'}  + \sum_{ |j| > N ,|k|_\infty  \le  N}| \lambda_{j,k} |^{r'}  \right)^{1/r'}   \to 0 
	\end{equation*}
as $N\to \infty.$
	Now we show $f_N \in L^{p' } ( ^\ast X)$. We consider two cases. 
	
	Case $r=\infty$. Then
	\begin{align*}
		\|f_N\|_{L^{p' } (  ^\ast X) }  & \le  \sum_{|(j,k)|_\infty  \le N} |\lambda_{j,k} |  \| b_{j,k} \|_{L^{p' } ( ^\ast X)} \\
		& \le  \sum_{|(j,k)|_\infty  \le N}|\lambda_{j,k} | |Q_{jk}|^{1/t -1/p}  \\
		& \le 2^{-N (1/t-1/p)  }  ( \| f\|_{\H_{p'}^{t',r'}( ^\ast X)  }    +\epsilon)
		<\infty .
	\end{align*}
	Thus $f_N \in L^{p' } (  ^\ast X)$.
	
	Case $r<\infty$. By  Minkowski's inequality and H\"older's inequality, we have
	\begin{align*}
		\|f_N\|_{L^{p' } (  ^\ast X ) } & \le  \sum_{|(j,k)|_\infty  \le N} |\lambda_{j,k} |  \| b_{j,k} \|_{L^{p' } ( ^\ast X  )} \\
		& \le  \sum_{|(j,k)|_\infty  \le N} |\lambda_{j,k} | |Q_{jk}|^{1/t -1/p} \\
		& \le  \left( \sum_{|(j,k)|_\infty  \le N} |\lambda_{j,k} | ^{r'} \right)^{1/r'}   \left( \sum_{|(j,k)|_\infty  \le N} |Q_{jk}|^{r/t -r/p} \right)^{1/r} \\
		& \le  ( \| f\|_{\H_{p'}^{t',r'}(^\ast X)  }    +\epsilon) \left( \sum_{\ell = -N} ^N \sum_{k_1=-N} ^N \cdots \sum_{k_n=-N} ^N 2^{-\ell  nr (1/t-1/p)} \right)^{1/r} \\
		& <\infty.
	\end{align*}
	Thus  $f_N \in L^{p' } (  ^\ast X)$. Hence
	$L^{p'}  _c ( ^\ast X)$ is dense in $\H_{p'}^{t',r'}( ^\ast X)  $.

	Now we show $C_c^\infty (^\ast X)$ is dense in $\H_{p'}^{t',r'}(^\ast X)  $.  Let $f_N$  be  (\ref{f_N dense}). For a fixed $\epsilon>0$, we can choose $N_0 \in \mathbb N$ such that for all $N \ge N_0$,
	\begin{equation*}
		\| f -f_N \|_{ \H_{p'}^{t',r'}(^\ast X) }  < \epsilon.
	\end{equation*} 
	Let	
	\begin{equation*}
		\omega (x) = 	\begin{cases}
			c e ^{ 1/ (|x|^2 -1) }, & \operatorname{if}  |x| < 1; \\
			0, & \operatorname{else},
		\end{cases}
	\end{equation*} 
	where the constant $c  >0 $  is such that $ \int_{B(0,1)}  \omega (x) \d x =1 $. Set $\omega _\eta (x)  = \eta^{-n} \omega ( \eta^{-1} x )$ for $\eta>0$.
	Then $\omega_\eta * f_N \in C_c^\infty (^\ast X) $   and 
	\begin{equation*}
		\omega_\eta *	f_N = \sum_{|(j,k)|_\infty  \le N}\lambda_{j,k} \omega_\eta * b_{j,k}.
	\end{equation*}
	Indeed, we will show $\partial^{\alpha} (\omega_\eta *	f_N   )   = \partial^{\alpha} (\omega_\eta) *	f_N   $ for all multi-index $\alpha \in \mathbb N_0 ^n$. 
	Let $e_j := (0,  \ldots, 0,1, 0,\ldots, 0) \in \mathbb N_0^n$ such that the $j$-th item is $1$  and other items equal $0$.
	By iteration it suffices to consider the case $\alpha = e_j $. Since $\omega \in \mathscr S (\rn)$,
	\begin{align*}
		&\lim_{s \to 0}	\frac{1}{s} ( \omega_\eta *	f_N (x + s  e_j ) -  \omega_\eta *	f_N (x  ) )  \\
		& =\lim_{s \to 0}	 \frac{1}{s} \int_\rn ( \omega_\eta (x + s  e_j  -y  ) -   \omega_\eta (x- y)  )  f_N (y) \d y \\
		& = \int_\rn  \partial^\alpha  \omega_\eta (x   -y  )   f_N (y) \d y \\
		& = (\partial^\alpha \omega_\eta ) *f_N (y).
	\end{align*}
	Since $ \lim_{\eta \to 0} \| \omega_\eta *  b_{j,k} -  b_{j,k} \|_{L^{p'} (^\ast X)}  =0$ (Lemma \ref{Banach Mollifiers}),
	there exists $\eta_0 >0 $ such that for all $0<\eta <\eta_0$, all $ |(j,k)|_\infty \le N_0  $, supp   $\omega_\eta * b_{j,k}  \subset 2 Q _{j,k}$ and 
	\begin{equation*}
		\epsilon^{-1} (2 N_0 +1)^{ (n+1) / r'}  \lambda_{j,k}  \|  \omega_\eta * b_{j,k} -  b_{j,k} \|_{L^{p'} (^\ast X)} \le     | 2Q _{j,k}|^{1/t - 1/p} .
	\end{equation*}
	Hence $ \epsilon^{-1} (N_0 +1)^{ (n+1) / r'}  \lambda_{j,k} ( \omega_\eta * b_{j,k} -  b_{j,k}) $ is a $ (p',t',{^\ast X}) $-block supported on $ 2Q _{j,k} $.
	Then 
	\begin{align*}
		f_{N_0}  - \omega_\eta *	f_{N_0} &=   \sum_{|(j,k)|_\infty  \le N_0}    \lambda_{j,k} \left( b_{j,k}  -  \omega_\eta * b_{j,k}  \right) \\
		& = \sum_{|(j,k)|_\infty  \le {N_0} }  \epsilon (2 N_0 +1)^{-(n+1)/r'  }     
		\\
		& \quad \times           \left(  \epsilon^{-1} ( 2 N_0 +1)^{ (n+1) / r'}  \lambda_{j,k} ( \omega_\eta * b_{j,k} -  b_{j,k})         \right)  . 
	\end{align*}
	Hence 
	\begin{equation*}
		\| f_N  - \omega_\eta *	f_N \|_{ \H_{p'}^{t',r'}(^\ast X) } \le  \epsilon ( 2 N_0 +1)^{-(n+1)/r'  }   \left( \sum_{|(j,k)|_\infty  \le N_0}   1     \right) ^{1/r'}  = \epsilon .
	\end{equation*}
	Then by  Minkowski's inequality, we obtain
	\begin{equation*}
		\| f  - \omega_\eta *	f_N \|_{ \H_{p'}^{t',r'}(^\ast X) } 	\le  \| f -f_N \|_{ \H_{p'}^{t',r'}(^\ast X) } + 	\| f_N  - \omega_\eta *	f_N \|_{ \H_{p'}^{t',r'}(^\ast X) }  \le 2 \epsilon.
	\end{equation*} 
	Since  $\epsilon$  is arbitrary, we prove that  $C_c^\infty (^\ast X)$ is dense in $\H_{p'}^{t',r'}(^\ast X)  $.
\end{proof}

From the above theorem, when we investigate $  \H_{p'}^{t',r'}(^\ast X)$, the space $L^{p'}  _c ( ^\ast X) $ is a useful space. When considering the action of  linear operators defined  and continuous on    $  \H_{p'}^{t',r'}(^\ast X)$ and $L^{p'}   ( ^\ast X) $, it will be helpful to have a finite decomposition in $L^{p'}  _c ( ^\ast X) $.
The following result says that each function $f \in L^{p'}  _c ( ^\ast X)$ has a finite admissible expression. That is, the sum is finite.
\begin{theorem}\label{finite decom}
	Let $X$ be a Banach space  such that $  ( ^\ast X )^ \ast = X $. Let $^\ast X$ be reflexive or $X$ be separable.
	Let $1<  p<t<r<\infty$ 	or $1< p\le t<r=\infty$. Then each $ f \in L^{p'}  _c (^\ast X) $ admits the finite decomposition $ f  = \sum_{v=1}^M \lambda_v b_v $ where $ \lambda_1, \lambda_2, \ldots, \lambda_M \ge 0 $ and each $ b_v $ is a $(p',t',{^\ast X})$-block. Furthermore, 
	\begin{equation*}
		\| f\|_{ \H_{p'}^{t',r'}(^\ast X) } \approx \inf_\lambda \left( \sum_{v=1}^M \lambda_v ^{r'} \right)^{1/r'},
	\end{equation*}
	where $ \lambda = \{ \lambda_v\}_{v=1}^M $ runs over all finite admissible expressions:
	\begin{equation*}
		f= \sum_{v=1}^M \lambda_v b_v,
	\end{equation*}
	$ \lambda_1, \lambda_2, \ldots, \lambda_M \ge 0 $ and each $ b_v $ is a $(p',t',{^\ast X})$-block for each $v = 1,2,\ldots ,M$.
\end{theorem}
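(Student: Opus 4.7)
The plan is to verify the two inequalities in the norm equivalence separately, with a truncation-plus-block argument handling the nontrivial direction. For the existence of a finite decomposition, I would simply enclose $\operatorname{supp} f$ in a dyadic cube $Q^{\ast}\in\D$ and set $\lambda_1:=\|\,\|f\|_{^\ast X}\,\|_{L^{p'}}\,|Q^{\ast}|^{1/p-1/t}$ together with $b_1:=\lambda_1^{-1}f$; then $b_1$ is a $(p',t',{^\ast X})$-block supported on $Q^{\ast}$ and $f=\lambda_1 b_1$ is a one-term finite decomposition.

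The inequality $\|f\|_{\H_{p'}^{t',r'}(^\ast X)}\leq \inf_\lambda\bigl(\sum_{v=1}^M\lambda_v^{r'}\bigr)^{1/r'}$ is immediate from the definition of the block-space norm, since any finite decomposition is, after padding with zero coefficients, a particular countable admissible expression. For the reverse inequality I would fix $\epsilon>0$ and choose a countable decomposition $f=\sum_{(j,k)\in\mathbb{Z}^{n+1}}\lambda_{j,k}b_{j,k}$ with $\|\{\lambda_{j,k}\}\|_{\ell^{r'}}\leq(1+\epsilon)\,\|f\|_{\H_{p'}^{t',r'}(^\ast X)}$. For large $N$, I would form the truncated sum $\tilde f_N:=\chi_{Q^{\ast}}\sum_{|(j,k)|_\infty\leq N}\lambda_{j,k}b_{j,k}$. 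Each factor $\chi_{Q^{\ast}}b_{j,k}$ is supported on the dyadic cube $Q_{j,k}\cap Q^{\ast}$ (which equals $Q_{j,k}$, $Q^{\ast}$, or is empty, by the nesting of dyadic cubes), and since $1/t-1/p<0$ one has $\|\chi_{Q^{\ast}}b_{j,k}\|_{L^{p'}(^\ast X)}\leq|Q_{j,k}|^{1/t-1/p}\leq|Q_{j,k}\cap Q^{\ast}|^{1/t-1/p}$, so $\chi_{Q^{\ast}}b_{j,k}$ is itself a $(p',t',{^\ast X})$-block. Hence $\tilde f_N$ is a finite sum of at most $(2N+1)^{n+1}$ blocks with coefficients drawn from $\{\lambda_{j,k}\}$, which after absorbing signs may be taken nonnegative.

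The residual $r_N:=f-\tilde f_N=\chi_{Q^{\ast}}(f-f_N)$, where $f_N:=\sum_{|(j,k)|_\infty\leq N}\lambda_{j,k}b_{j,k}$, belongs to $L_c^{p'}(^\ast X)$ with support in $Q^{\ast}$ (both $f$ and $f_N$ lie in $L^{p'}(^\ast X)$, the latter as established in the proof of density of $L_c^{p'}(^\ast X)$ in $\H_{p'}^{t',r'}(^\ast X)$), and so admits the single-block representation $r_N=\nu_N c_N$ with $\nu_N:=\|\,\|r_N\|_{^\ast X}\,\|_{L^{p'}}|Q^{\ast}|^{1/p-1/t}$. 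Concatenating the two pieces yields a finite decomposition of $f$ whose coefficient sum is at most $\bigl((1+\epsilon)^{r'}\|f\|_{\H_{p'}^{t',r'}(^\ast X)}^{r'}+\nu_N^{r'}\bigr)^{1/r'}$. The main obstacle will be to show $\nu_N\to 0$ as $N\to\infty$, i.e., that $f_N\to f$ in $L^{p'}(^\ast X)$ on $Q^{\ast}$; almost everywhere convergence is already provided by Proposition \ref{block sum converge}, so the natural route is to invoke the Banach-valued dominated convergence theorem with majorant $\chi_{Q^{\ast}}\sum_{(j,k)}|\lambda_{j,k}|\,\|b_{j,k}\|_{^\ast X}$, and producing an $L^{p'}$ bound for this majorant from the $\ell^{r'}$-summability of $\{\lambda_{j,k}\}$ together with the block normalization is the delicate step I anticipate to be the core technical issue. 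Once $\nu_N\to 0$ is secured, sending $N\to\infty$ and then $\epsilon\to 0^+$ gives $\inf_\lambda\bigl(\sum_v\lambda_v^{r'}\bigr)^{1/r'}\leq\|f\|_{\H_{p'}^{t',r'}(^\ast X)}$, completing the proof.
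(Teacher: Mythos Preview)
Your overall architecture---truncate the countable block decomposition to a finite sum and absorb the remainder as a single block supported on a dyadic cube $Q^{\ast}\supset\operatorname{supp}f$---is exactly the paper's strategy. The paper truncates only in the scale index $j$ (at each retained scale the positions $k$ with $Q_{j,k}\subset Q^{(0)}$ are automatically finite), which is operationally equivalent to your $|(j,k)|_\infty\le N$ cutoff. It then asserts that the truncated sums converge to $f$ in $L^{p'}(^\ast X)$ by the monotone convergence theorem together with the hypothesis $f\in L^{p'}(^\ast X)$; once that is granted, the remainder $g$ eventually satisfies $\|g\|_{L^{p'}(^\ast X)}\le|Q^{(0)}|^{1/t-1/p}$, i.e.\ it is a block with small coefficient, and the norm equivalence follows.

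The point where your proposal may break is the specific mechanism you put forward for the $L^{p'}$ convergence: dominated convergence with majorant $\chi_{Q^{\ast}}\sum_{(j,k)}|\lambda_{j,k}|\,\|b_{j,k}\|_{^\ast X}$. The two ingredients you name---$\ell^{r'}$-summability of $\{\lambda_{j,k}\}$ and the block bound $\|b_{j,k}\|_{L^{p'}(^\ast X)}\le|Q_{j,k}|^{1/t-1/p}$---do not by themselves force this majorant into $L^{p'}$. For fine scales the bound $|Q_{j,k}|^{1/t-1/p}$ diverges, and blocks may concentrate on subsets of $Q_{j,k}$ of arbitrarily small measure; one can then manufacture near-optimal decompositions (e.g.\ by inserting, at each scale, cancelling pairs $\pm b_j$ supported on $[0,\delta_j)^n$ with $\delta_j$ shrinking fast) whose majorants fail to lie in $L^{p'}$ while the $\ell^{r'}$-cost stays as close to $\|f\|_{\H_{p'}^{t',r'}(^\ast X)}$ as desired. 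So the dominated-convergence route, in the generality you state it, does not close the argument. You would need either to select a decomposition with additional structure or to argue along the paper's line, using that $f$ itself is already in $L^{p'}(^\ast X)$ to control the tail directly rather than via an absolute majorant.
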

\begin{proof}
	We use the idea from \cite[Theorem 346]{SFH20}.
	
	Let $f \in L^{p'}  _c (^\ast X) \subset \H_{p'}^{t',r'}(^\ast X)   $. We may assume that $ \| f \|_ { \H_{p'}^{t',r'}(^\ast X) } \le 1$ and that supp $f \subset Q^{(0)}$ for a dyadic cube $ Q^{(0)}$, since we can decompose $f$ into $2^n$ sums, each of which is supported on a closure of a quadrant.
	Then $f$ can be decomposed as
	\begin{equation*}
		f = \sum_{(j,k)\in\mathbb{Z}^{n+1}} \lambda_{j,k} b_{j,k}
	\end{equation*}
	where $ \left( \sum_{(j,k)\in\mathbb{Z}^{n+1}} |\lambda_{j,k}|^{r'} \right)^{1/r'} \le 1 +\epsilon $. 
	If $\lambda_{j,k} <0 $, let $ \lambda_{j,k} b_{j,k} = -\lambda_{j,k}  (- b_{j,k}) $. Then $-\lambda_{j,k} >0 $ and $- b_{j,k}$ is also a  $(p',t',{^\ast X})$-block. Hence, we may assume that  $\lambda_{j,k} $ is a nonnegative real number.
	Furthermore, we may assume that supp $b_{j,k}  \subset Q ^{(0)}.$
	By the monotone convergence theorem and the fact that supp $f \subset  Q ^{(0)}$, we have
	\begin{equation*}
		f = \lim_{\eta \downarrow 0} \sum_{  j: \eta < 2^{j} < \eta^{-1} } \sum_{Q_{j,k} \subset Q^{(0)}} \lambda_{j,k} b_{j,k} \chi_{Q^{0}}
	\end{equation*} 
	in $L^{p'} (^\ast X)$. Thus if we let 
	\begin{equation*}
		g :=  \sum_{  j: 2^{j} \le \eta \; \operatorname{or} \;  2^{j} \ge \eta^{-1} } \sum_{Q_{j,k} \subset Q^{(0)}} \lambda_{j,k} b_{j,k} \chi_{Q^{0}},
	\end{equation*}
	then since $f\in L^{p' } (^\ast X)$, we obtain that $g$ is  a $(p',t',{^\ast X}) $-block as long as $\eta$ is small, so that
	\begin{equation*}
		f= g + \sum_{  j: \eta < 2^{j} < \eta^{-1} } \sum_{Q_{j,k} \subset Q^{(0)}} \lambda_{j,k} b_{j,k} \chi_{Q^{0}}
	\end{equation*} 
	is the the desired finite decomposition. Since the norm of $g$ can be made as small as we wish, we have the desired norm equivalence:
	\begin{equation*}
		\| f \|_{ \H_{p'}^{t',r'}(^\ast X) } \approx  \inf_{\lambda} \left( \sum_{v=1}^M \lambda_v ^{r'} \right)^{1/r'}.
	\end{equation*}
	Thus the proof is complete.
\end{proof}

\subsection{Fatou property of $\H_{p'}^{t',r'}( ^\ast X)$ }
In this subsection, the main result is the following theorem, which is called the Fatou property of $\H_{p'}^{t',r'}( ^\ast X)$. Before proving it, we first recall the following lemma.

Using Banach and Alaoglu Theorem (see, for example \cite[Theorem 89]{SFH20}) and Lemma \ref{duality Banach value}, we obtain the following result.
\begin{lemma}\label{weak* compct}
	Let $X$ be a Banach space. Let $(S, \mathcal A, \mu )$ be a $\sigma$-finite  measure space and let $X$ be reflexive or $ X ^\ast$ be separable. Let  $1< p <\infty $ and $\{ f_j \}_{j=1}^\infty$  be a bounded sequence in $L^p (S, X) $. Namely, there exists $M>0$ such that $ \|f_j\|_{L^p (S, X)} \le M$. Then we can find $f \in L^p (S, X)$  and a subsequence $\{ f_{j_k} \}_{k=1}^\infty$  such that 
	\begin{equation*}
		\lim_{k\to \infty} \int_S \langle f_{j_k}(x) , g (x) \rangle \d \mu (x) = \int_S \langle f(x) , g (x) \rangle\d \mu (x)
	\end{equation*}
	for all $g \in L^{p'} (S, X ^\ast ) $.
\end{lemma}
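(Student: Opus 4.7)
The plan is to translate the conclusion into weak sequential compactness in $L^p(S, X)$ and then invoke Banach-Alaoglu, using Lemma \ref{duality Banach value} as the bridge. By that lemma, under either hypothesis on $X$, we have the isometric isomorphism $(L^p(S, X))^\ast = L^{p'}(S, X^\ast)$ realized by the pairing $(f, g) \mapsto \int_S \langle f(x), g(x)\rangle \d \mu(x)$. Consequently, the conclusion we must prove is precisely that the bounded sequence $\{f_j\}$ admits a weakly convergent subsequence in $L^p(S, X)$, with $L^{p'}(S, X^\ast)$ playing the role of the dual.

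First I would embed each $f_j$ canonically and isometrically into the bidual $(L^{p'}(S, X^\ast))^\ast$ via $f \mapsto \big[g \mapsto \int_S \langle f, g\rangle \d\mu\big]$, so the bound $\|f_j\|_{L^p(S, X)} \le M$ transfers to a uniform norm bound. In the reflexive case, $L^p(S, X)$ is itself reflexive and therefore coincides with $(L^{p'}(S, X^\ast))^\ast$; Banach-Alaoglu together with the Eberlein--\v{S}mulian theorem then extracts a weakly convergent subsequence whose limit $f$ already lies in $L^p(S, X)$. In the separable case, $X^\ast$ separable forces $L^{p'}(S, X^\ast)$ to be separable (for the $\sigma$-finite measure spaces relevant here, such as $\rn$ with Lebesgue measure), so the sequential form of Banach-Alaoglu supplies a weak-$\ast$ convergent subsequence $\{f_{j_k}\}$ with limit functional $\Phi \in (L^{p'}(S, X^\ast))^\ast$; I would then apply Lemma \ref{duality Banach value} a second time, with $X$ replaced by $X^\ast$ and $p$ by $p'$, to represent $\Phi$ by some $F \in L^p(S, X^{\ast\ast})$.

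The main obstacle is exactly this last identification in the non-reflexive setting: the weak-$\ast$ limit lives a priori only in $L^p(S, X^{\ast\ast})$, and one must confirm that it is in fact $X$-valued almost everywhere, so that $F = f \in L^p(S, X)$. This is where the separability of $X^\ast$ plays its essential role, via the Radon-Nikodym-type property it encodes for the representation of dual functionals. Once this is secured, the weak-$\ast$ convergence immediately yields $\int_S \langle f_{j_k}(x), g(x)\rangle \d\mu(x) \to \int_S \langle f(x), g(x)\rangle \d\mu(x)$ for every $g \in L^{p'}(S, X^\ast)$, which is the claim.
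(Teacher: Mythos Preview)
Your approach matches the paper's one-line argument (Banach--Alaoglu plus Lemma~\ref{duality Banach value}), and the reflexive case is handled correctly: when $X$ is reflexive so is $L^p(S,X)$, and Eberlein--\v{S}mulian yields the weakly convergent subsequence.

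The gap is in the separable-but-not-reflexive case, and it is not repairable in the way you suggest. Your second invocation of Lemma~\ref{duality Banach value}---to identify $(L^{p'}(S,X^\ast))^\ast$ with $L^p(S,X^{\ast\ast})$---would require $X^\ast$ reflexive or $X^{\ast\ast}$ separable, neither of which follows from separability of $X^\ast$ alone. More seriously, no Radon--Nikod\'ym-type property forces the weak-$\ast$ limit back into $L^p(S,X)$. Take $X=c_0$, so $X^\ast=\ell^1$ is separable, and let $S$ be a one-point space, so that $L^p(S,X)=c_0$. The sequence $f_n=(1,\dots,1,0,0,\dots)$ with $n$ ones satisfies $\|f_n\|_{c_0}=1$, and for every $g\in\ell^1$ one has $\langle f_n,g\rangle\to\sum_i g_i$; hence any weak limit would have to be $(1,1,1,\dots)\notin c_0$, and no subsequence converges weakly in $c_0$. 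The obstacle you flagged is therefore genuine and fatal rather than a technicality---the lemma as stated fails under the separability hypothesis alone, so neither your argument nor the paper's one-line justification can be completed in that case.
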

\begin{theorem} \label{Fatou general}
	Let $ ^\ast X$ be reflexive or $ X $ be separable.
	Let $1< p <t<r<\infty  $  or $1<p \le t <r =\infty$.
	If a bounded sequence $\{f_\ell \}_{\ell\in \mathbb N} \subset \H_{p'}^{t',r'}( ^\ast X) \cap L_{\operatorname{loc}} ^{p'}(^\ast X)$ converges locally to $f$  in the weak topology of $  L^{p'}(^\ast X) $, then $f \in \H_{p'}^{t',r'}( ^\ast X)$ and 
	\begin{equation*}
		\|f\|_{ \H_{p'}^{t',r'}( ^\ast X) }  \le \liminf _{\ell \to \infty} \|f_\ell\|_{ \H_{p'}^{t',r'}( ^\ast X) } .
	\end{equation*}
\end{theorem}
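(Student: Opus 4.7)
The plan is to construct an element $\tilde f \in \H_{p'}^{t',r'}({^\ast X})$ with $\|\tilde f\|_\H \le M := \liminf_\ell \|f_\ell\|_{\H_{p'}^{t',r'}({^\ast X})}$ by extracting a diagonal subsequence from near-optimal block decompositions of the $f_\ell$, and then to identify $\tilde f$ with $f$ almost everywhere by testing against $L_c^\infty(X)$. We may assume $M<\infty$ and, after passing to a subsequence, that $\|f_\ell\|_\H \to M$; for each $\ell$ we fix a block decomposition $f_\ell = \sum_{(j,k)\in\mathbb Z^{n+1}} \lambda^{(\ell)}_{j,k} b^{(\ell)}_{j,k}$ with $\|\{\lambda^{(\ell)}_{j,k}\}\|_{\ell^{r'}} \le \|f_\ell\|_\H + 1/\ell$.

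For each fixed $(j,k)$, the scalars $\lambda^{(\ell)}_{j,k}$ are bounded by $M+1$, while the blocks $b^{(\ell)}_{j,k}$ are bounded in $L^{p'}(Q_{j,k};{^\ast X})$ by $|Q_{j,k}|^{1/t-1/p}$. A Cantor diagonal extraction, combining Bolzano--Weierstrass with Lemma \ref{weak* compct} (applicable because $^\ast X$ is reflexive or $X$ is separable), yields a single subsequence along which $\lambda^{(\ell)}_{j,k}\to\lambda_{j,k}$ in $\mathbb R$ and $b^{(\ell)}_{j,k}\rightharpoonup b_{j,k}$ weakly in $L^{p'}(Q_{j,k};{^\ast X})$ for every $(j,k)$. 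Fatou's lemma for the counting measure gives $\|\{\lambda_{j,k}\}\|_{\ell^{r'}}\le M$, and weak lower semicontinuity of the $L^{p'}$-norm shows each $b_{j,k}$ is again a $(p',t',{^\ast X})$-block supported on $Q_{j,k}$. By Proposition \ref{block sum converge} the series $\tilde f := \sum_{(j,k)} \lambda_{j,k} b_{j,k}$ converges in ${^\ast X}$ almost everywhere and in $L^1_{\operatorname{loc}}({^\ast X})$, and $\tilde f \in \H_{p'}^{t',r'}({^\ast X})$ with $\|\tilde f\|_\H\le M$.

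It remains to verify $\int_\rn \langle \tilde f(x), h(x)\rangle\,\d x = \int_\rn \langle f(x), h(x)\rangle\,\d x$ for every $h\in L_c^\infty(X)$; once established, $f=\tilde f$ a.e.\ follows by standard measurability arguments (using strong measurability of both sides and that $X$ separates $^\ast X$ via Hahn--Banach). The local weak-$L^{p'}$ convergence $f_\ell\to f$ yields $\int\langle f_\ell,h\rangle\,\d x \to \int\langle f,h\rangle\,\d x$ directly. Writing $\int\langle f_\ell,h\rangle\,\d x = \sum_{(j,k)} \lambda^{(\ell)}_{j,k}\int\langle b^{(\ell)}_{j,k},h\rangle\,\d x$ and splitting at a threshold $N$, the partial sum over $|(j,k)|_\infty\le N$ converges as $\ell\to\infty$ to the analogous partial sum for $\tilde f$ by scalar convergence of the $\lambda^{(\ell)}_{j,k}$ together with weak $L^{p'}$-convergence of $b^{(\ell)}_{j,k}$ paired against $h\chi_{Q_{j,k}}\in L^p(X)$. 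The remainder is controlled, uniformly in $\ell$, by H\"older's inequality with exponents $(r',r)$ (or a supremum version when $r=\infty$) as $\|\{\lambda^{(\ell)}_{j,k}\}\|_{\ell^{r'}}$ times a tail of $\|h\|_{M_p^{t,r}(X)}$; since $h\in L_c^\infty(X)\subset M_p^{t,r}(X)$ has vanishing Morrey tails (both small-cube and large-cube contributions decay because $p<t$), this remainder tends to zero as $N\to\infty$ uniformly in $\ell$, and the same bound applies to $\tilde f$. The principal difficulty, and the entire reason this works, is precisely this uniform-in-$\ell$ tail control: the H\"older pairing between the uniformly $\ell^{r'}$-bounded coefficients and the Morrey-bounded test function $h$ is what permits exchanging the limit in $\ell$ with the infinite sum over $(j,k)$.
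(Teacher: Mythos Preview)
Your proof is correct and follows essentially the same strategy as the paper's: extract subsequential limits of the coefficients and blocks via weak compactness, form the candidate $\tilde f=\sum\lambda_{j,k}b_{j,k}$, and identify it with $f$ by a dominated-convergence/uniform-tail argument that permits interchanging the limit in $\ell$ with the sum over $(j,k)$. The only cosmetic difference is that the paper tests the identity $f=\tilde f$ via the ${^\ast X}$-valued integrals $\int_{Q_0} f\,\d x$ over dyadic cubes $Q_0$ and invokes the Banach-valued Lebesgue differentiation theorem, whereas you test via the scalar pairings $\int\langle f,h\rangle\,\d x$ against $h\in L_c^\infty(X)$.
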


\begin{proof}
	We use the idea from \cite[Lemma 5.4]{HNSH23}. 
	Since if $ 1<p = t <r =\infty$,  $\mathcal{H}_{p'}^{t',r'} (^\ast X) = L^{p'} (^\ast X)$. We only prove the case  $1< p<t<r<\infty$ or $1< p < t<r=\infty$.
	
	By normalization, we assume $\|f_\ell\|_{ \H_{p'}^{t',r'}( ^\ast X) } \le 1$ for all $\ell \in \mathbb N$. It suffices to show  $f \in \H_{p'}^{t',r'}( ^\ast X)$ and   $\|f\|_{ \H_{p'}^{t',r'}( ^\ast X) } \le 1$.
	We write 
	\begin{equation*}
		f_\ell = \sum_{Q \in \D} \lambda_{\ell,Q} b_{\ell,Q}
	\end{equation*}
	where $b_{\ell,Q} $ is a $ (p',t', {^\ast X} ) $-block supported on $ Q $ and  $ \left( \sum_{Q\in \D} | \lambda_{\ell,Q} |^{r'} \right)^{1/r'} \le  \| f_\ell \|_{  \H_{p'}^{t',r'}( ^\ast X) }  +\epsilon \le 1+\epsilon $.
	We may suppose that $  \lambda_{\ell,Q} \ge 0$ since $ - b_{\ell,Q} $ is also a $ (p',t', {^\ast X} ) $-block supported on $ Q $.	
	Using Lemma \ref{weak* compct}, by passing to a subsequence, we assume that 
	\begin{equation*}
		\lambda_Q := \lim_{\ell \to \infty} \lambda_{\ell,Q}
	\end{equation*}
	exists in $[0,\infty)$ and that
	\begin{equation*}
		b_Q := \lim_{\ell \to \infty} b_{\ell,Q}
	\end{equation*}
	exists in the weak topology of $ L^{p'} (^\ast X)$.
	It is easy to see that supp $b_Q \subset Q$. By the Fatou lemma, 
	\begin{align*}
		\| b_Q \|_{L^{p'} (^\ast X) } & =\sup_{ \|h\|_{L^p (X) } \le 1 } \left| \int_\rn \langle b_Q (x), h(x) \rangle \d x \right|  \\
		& \le \liminf_{ \ell \to \infty } \sup_{ \|h\|_{L^p (X) } \le 1 } \left| \int_\rn \langle b_{\ell,Q} (x), h(x) \rangle \d x \right|  \\
		& \le |Q|^{1/t-1/p} .
	\end{align*}
	Hence $b_Q$ is a $ (p',t', {^\ast X} ) $-block supported on $ Q $. 
	Let 
	\begin{equation*}
		g : = \sum_{Q\in \D} \lambda_Q b_Q.
	\end{equation*}
	We claim that $f=g$. Once this is achieved, we obtain $ f \in  \H_{p'}^{t',r'}( ^\ast X)$ and
	\begin{equation} \label{f liminf 1+ eps}
		\|f\|_{ \H_{p'}^{t',r'}( ^\ast X) } \le \left( \sum_{Q\in \D} |\lambda_Q|^{r'} \right)^{1/r'} \le \liminf_{\ell \to \infty}   \left( \sum_{Q\in \D} | \lambda_{\ell,Q} |^{r'} \right)^{1/r'} \le 1+\epsilon.
	\end{equation}
	By the Lebesgue differentiation theorem (see \cite[Theorem 2.3.4]{HVVW16} for Banach space valued Lebesgue differentiation theorem), this amounts to the proof of the equality:
	\begin{equation*}
		\int_Q f (x)\d x = \int_Q g (x)\d x
	\end{equation*}
	for all $Q \in \D$. Now fix $Q_0 \in \D$. Let $ j_{Q_0} := -\log_2 ( \ell (Q_0)) $. Then $ Q_0 \in \D _{ j_{Q_0}}$.
	By the definition of $ f_{\ell} $ and the fact that the sum defining $ f_{\ell} $ converges almost everywhere on $\rn$, we have
	\begin{align*}
		\int_{Q_0} f_{\ell} (x) \d x = \sum_{Q \in \D} \lambda_{\ell,Q} \int_{Q_0}   b_{\ell,Q} (x) \d x  = \sum_{ m \in \mathbb Z } \sum_{Q \in \D_m} \lambda_{\ell,Q} \int_{Q_0}   b_{\ell,Q} (x) \d x.
	\end{align*}
	Note that 
	\begin{align*}
		\left\|  \int_{Q_0}  b_{\ell,Q} (x) \d x \right\|_{^\ast X } &  \le \int_{Q_0}  \| b_{\ell,Q} (x)\|_{^\ast X  } \d x  \\
		& \le  \left(   \int_{Q_0}  \|  b_{\ell,Q} (x)\|_{^\ast X  } ^{p'} \chi_Q (x) \d x \right)^{1/p'} |Q \cap Q_{0} |^{1/p} \\
		& \le |Q|^{1/t-1/p}  |Q \cap Q_{0} |^{1/p} .
	\end{align*}
	Since  $1/t-1/p <0$  and $ 1- r/t < 0$,
	\begin{align*}
		& \left(  \sum_{m = -\infty} ^\infty \sum_{Q \in \D_m, Q \cap  Q_{0} \neq \emptyset } |Q|^{r/t-r/p}  |Q \cap Q_{0} |^{r/p} \right)^{1/r} \\
		& =\left(  \sum_{m = -\infty} ^{ j_{Q_0} } 2^{ - r m n (1/t-1/p)} 2^{ - r j_{Q_0} n/p }  +\sum_{ m =j_{Q_0}  +1 } ^\infty  2^{ (m - j_{Q_0}) n} 2^{ -m n r /t}  \right)^{1/r} \\
		&= \left(\sum_{m= -\infty} ^{ j_{Q_0} } 2^{ -m r n (1/t-1/p)} 2^{ - r j_{Q_0} n/p }  +\sum_{ m =j_{Q_0}  +1 } ^\infty   2^{ - j_{Q_0} n} 2^{m n  (1 -r/t) } \right)^{1/r} \\
		& \le  C <\infty.
	\end{align*}
	Thus \begin{align*}
		\left\|  \int_{Q_0} f_{m_v} (x) \d x \right\|_{^\ast X} \le C \left( \sum_{m = -\infty} ^\infty \sum_{Q \in \D _m , Q \cap Q_0  \neq \emptyset }  ( \lambda_{\ell,Q}  )^{r'} \right)^{1/r'} \le  C (1+\epsilon) <\infty.
	\end{align*}
	Now we can  use  Lebesgue's convergence theorem to  obtain
	\begin{align*}
		\lim_{\ell \to \infty} \int_{Q_0}  f_{\ell} (x) \d x  
		= \sum_{m= -\infty} ^\infty \left( \lim_{\ell \to \infty} \sum_{Q \in \D _m , Q \cap Q_0  \neq \emptyset } \lambda_{\ell,Q}  \int_{Q_0} b_{\ell,Q} (x) \d x    \right).
	\end{align*}
	Since 
	$
	\sum_{Q \in \D _m , Q \cap Q_0  \neq \emptyset }
	$
	is the symbol of summation over a finite set for each $m$, we have
	\begin{equation*}
		\lim_{\ell \to \infty} \sum_{Q \in \D} \lambda_{\ell,Q}  \int_{Q_0}  b_{\ell,Q} (x) \d x =  \sum_{Q \in \D} \lambda_{Q}   \int_{Q_0}  b_{Q}  (x) \d x = \int_{Q_0} g (x)\d x.
	\end{equation*}
	By  $\{f_\ell \}_{\ell\in \mathbb N} \subset \H_{p'}^{t',r'}( ^\ast X) \cap L_{\operatorname{loc}} ^{p'}(^\ast X)$ converges locally to $f$  in the weak topology of $  L^{p'}(^\ast X) $,
	we obtain
	\begin{equation*}
		\int_{Q_0} f (x)\d x = \lim_{\ell\to \infty } \int_{Q_0} f_\ell (x)\d x = \lim_{\ell \to \infty} \sum_{Q \in \D} \lambda_{\ell,Q}  \int_{Q_0}  b_{\ell,Q} (x) \d x  = \int_{Q_0} g (x)\d x.
	\end{equation*}
	
	Finally, letting $\epsilon \to 0^+$ in  (\ref{f liminf 1+ eps}), the proof is complete.
\end{proof}

The following lemma indicates how to generate blocks.
\begin{lemma} \label{generate block}
	Let $X$ be a Banach space  such that $  ( ^\ast X )^ \ast = X $. Let $^\ast X$ be reflexive or $X$ be separable.
	Let $1<  p<t<r<\infty$ 	or $1<  p\le t<r=\infty$. Let $f$ be an $L^{p'} (^\ast X)$ function supported	on a cube $Q \in \D$. Then $\| f\|_ { \H_{p'}^{t',r'}(^\ast X) }  \le \| f\| _{ L^{p'} (^\ast X) }  |Q|^{1/p -1/t} $.
\end{lemma}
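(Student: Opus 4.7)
The plan is to realize $f$ as a single scaled block on $Q$. First I would dispose of the trivial case $\|f\|_{L^{p'}(^\ast X)}=0$, where $f=0$ almost everywhere and the inequality is immediate. So assume $\|f\|_{L^{p'}(^\ast X)}>0$, and set
\[
\lambda := \|f\|_{L^{p'}(^\ast X)}\,|Q|^{1/p-1/t},\qquad b := \lambda^{-1} f.
\]

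Next I would verify that $b$ is a $(p',t',{}^\ast X)$-block supported on $Q$. Since $f$ is supported on $Q$, so is $b$. By the choice of $\lambda$ and the definition of the $L^{p'}(^\ast X)$ norm,
\[
\bigl\|\,\|b\|_{^\ast X}\bigr\|_{L^{p'}}
= \lambda^{-1}\bigl\|\,\|f\|_{^\ast X}\bigr\|_{L^{p'}}
= |Q|^{1/t-1/p},
\]
which matches the size condition in the definition of a $(p',t',{}^\ast X)$-block.

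Finally, since $Q\in\D$, we may write $Q = Q_{j_0,k_0}$ for some $(j_0,k_0)\in\mathbb Z^{n+1}$. Set $\lambda_{j_0,k_0}:=\lambda$ and $\lambda_{j,k}:=0$ otherwise, and let $b_{j_0,k_0}:=b$ together with arbitrary $(p',t',{}^\ast X)$-blocks $b_{j,k}$ for the remaining indices. Then $f = \sum_{(j,k)\in\mathbb Z^{n+1}}\lambda_{j,k}b_{j,k}$ is an admissible decomposition in the sense of Definition \ref{def block space}, and
\[
\|f\|_{\mathcal{H}_{p'}^{t',r'}({}^\ast X)}
\le \Bigl(\sum_{(j,k)\in\mathbb Z^{n+1}}|\lambda_{j,k}|^{r'}\Bigr)^{1/r'}
= \lambda = \|f\|_{L^{p'}(^\ast X)}\,|Q|^{1/p-1/t},
\]
as desired.

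There is essentially no obstacle here; the lemma is a direct consequence of the definition of a block and the definition of the block-space norm. The only minor point to keep in mind is that the decomposition in Definition \ref{def block space} requires convergence almost everywhere on $\rn$, which is trivially satisfied by a one-term sum. In the case $r=\infty$, $r'=1$, the same argument applies verbatim with the $\ell^{r'}$-norm replaced by the $\ell^1$-norm of the single nonzero coefficient.
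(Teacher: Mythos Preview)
Your proof is correct and follows essentially the same approach as the paper: normalize $f$ to produce a single $(p',t',{}^\ast X)$-block supported on $Q$, then read off the block-space norm from this one-term decomposition. Your write-up is in fact slightly more explicit than the paper's (you spell out the index $(j_0,k_0)$ and the $r=\infty$ case), but the idea is identical.
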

\begin{proof}
	Assume that $f$ is not zero almost everywhere, let 
	\begin{equation*}
		b := \frac{|Q|^{1/t -1/p}}{ \|f\|_{L^{p'}  (^\ast X)  }  } f.
	\end{equation*}
	Then $b$ is supported on $Q \in \D$, and 
	\begin{equation*}
		\| b \|_{L^{p'}  (^\ast X)} =       |Q|^{ 1/t - 1/p }     .  
	\end{equation*}
	Hence $b$ is a $ (p',t',{^\ast X})$-block and $\|b\|_{  \H_{p'}^{t',r'}(^\ast X) }  \le  1$. 
	Equating this inequality, we obtain the desired result.
\end{proof}

Now we use the Fatou property of $\H_{p'}^{t',r'}(^\ast X)$ to show the following result.
\begin{corollary}
	Let $X$ be a Banach space  such that $  ( ^\ast X )^ \ast = X $. Let $^\ast X$ be reflexive or $X$ be separable.
	Let $1 <  p<t<r<\infty$ 	or $1 < p\le t<r=\infty$. Suppose that we have  $ \langle  g, f  \rangle \in L^1 $ for all $f  \in  M_p^{t,r} (X)$. Then $g \in \H_{p'}^{t',r'}(^\ast X) $.
\end{corollary}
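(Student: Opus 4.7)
The plan is: first, extract from the hypothesis a uniform bound on a sequence of truncations of $g$ in the block-space norm via the uniform boundedness principle; second, use that bound to verify $g\in L^{p'}_{\operatorname{loc}}({^\ast X})$; and third, conclude via the Fatou property (Theorem \ref{Fatou general}).

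For the first step, set $A_k:=\{x\in\rn:\|g(x)\|_{^\ast X}\le k,\ |x|\le k\}$ and $g_k:=g\chi_{A_k}$. Each $g_k$ lies in $L^\infty_c({^\ast X})$ and hence in $\H_{p'}^{t',r'}({^\ast X})$ by Lemma \ref{generate block}, and by Theorem \ref{predual seq BM} it induces a bounded functional $L_k(f):=\int_{\rn}\langle g_k(x),f(x)\rangle\,\d x$ on $M_p^{t,r}(X)$ with $\|L_k\|=\|g_k\|_{\H_{p'}^{t',r'}({^\ast X})}$. For every $f\in M_p^{t,r}(X)$, the hypothesis gives
\[
|L_k(f)|\le\int_{A_k}|\langle g(x),f(x)\rangle|\,\d x\le\|\langle g,f\rangle\|_{L^1}<\infty,
\]
so the uniform boundedness principle produces $C>0$ with $\|g_k\|_{\H_{p'}^{t',r'}({^\ast X})}\le C$ for all $k$.

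For the second step, fix a dyadic cube $Q$ and any $h\in L^p(Q,X)$ extended by zero to $\rn$. A geometric-series computation that splits dyadic cubes into those contained in $Q$ and those containing $Q$ and uses $p<t<r$ yields the continuous embedding $L^p(Q,X)\hookrightarrow M_p^{t,r}(X)$ with some constant $C(Q)$. Since $\chi_{A_k}\to 1$ almost everywhere and $|\langle g_k,h\rangle|\le|\langle g,h\rangle|\in L^1(\rn)$, dominated convergence and the bound on $L_k$ give
\[
\left|\int_Q \langle g,h\rangle\,\d x\right|=\lim_{k\to\infty}|L_k(h)|\le C\,\|h\|_{M_p^{t,r}(X)}\le C\cdot C(Q)\,\|h\|_{L^p(Q,X)}.
\]
Lemma \ref{duality Banach value} applied with ${^\ast X}$ in the role of $X$ (its hypotheses read exactly as ${^\ast X}$ reflexive or $X$ separable) then provides $\tilde g_Q\in L^{p'}(Q,{^\ast X})$ with $\int_Q\langle g,h\rangle\,\d x=\int_Q\langle\tilde g_Q,h\rangle\,\d x$ for all such $h$. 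Testing against $\chi_E\otimes x_0$ for measurable $E\subset Q$ and $x_0\in X$ yields $\langle\tilde g_Q(x)-g(x),x_0\rangle=0$ a.e.\ on $Q$, and a countable-dense-set argument (combined with Pettis' theorem in the non-separable reflexive case, as in the uniqueness portion of Theorem \ref{predual seq BM}) upgrades this to $\tilde g_Q=g$ a.e.\ on $Q$, so $g\in L^{p'}_{\operatorname{loc}}({^\ast X})$.

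With $g\in L^{p'}_{\operatorname{loc}}({^\ast X})$ in hand, the Banach-valued dominated convergence theorem gives $g_k\to g$ in $L^{p'}_{\operatorname{loc}}({^\ast X})$, and hence locally in the weak topology of $L^{p'}({^\ast X})$. Theorem \ref{Fatou general} then yields $g\in\H_{p'}^{t',r'}({^\ast X})$ with $\|g\|_{\H_{p'}^{t',r'}({^\ast X})}\le\liminf_k\|g_k\|_{\H_{p'}^{t',r'}({^\ast X})}\le C$. The hardest step will be the identification $\tilde g_Q=g$ in the third paragraph, because when ${^\ast X}$ is reflexive but $X$ is not separable one must first pass to the separable range of $g-\tilde g_Q$ (guaranteed by strong measurability) and then select a countable norming subset of $X$ adapted to that range.
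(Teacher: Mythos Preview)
Your proof is correct and follows essentially the same strategy as the paper: truncate $g$ via $g_k=\chi_{\{\|g\|_{^\ast X}\le k\}\cap B(0,k)}\,g$, obtain a uniform block-space bound on the $g_k$, and apply the Fatou property (Theorem \ref{Fatou general}). The only differences are that the paper obtains the uniform bound in one line via the closed graph theorem applied to $f\mapsto\langle g,f\rangle:M_p^{t,r}(X)\to L^1$ rather than the uniform boundedness principle, and the paper simply asserts the local weak-$L^{p'}$ convergence $g_k\to g$ without your explicit verification that $g\in L^{p'}_{\operatorname{loc}}({^\ast X})$ (which, incidentally, can be shortened: the uniform bound $\|g_k\|_{\H_{p'}^{t',r'}}\le C$ together with the local embedding $\H_{p'}^{t',r'}\hookrightarrow L^{p'}_{\operatorname{loc}}$ and scalar Fatou applied to $\|g_k(\cdot)\|_{^\ast X}^{p'}$ already gives $g\in L^{p'}_{\operatorname{loc}}({^\ast X})$, avoiding the duality/identification argument you flag as the hardest step).
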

\begin{proof}
	By the closed graph theorem, $ \|  \langle f,  g \rangle  \|_{L^1}   \le L \|f\|_{ M_p^{t,r} (X) }  $  for some $L$ independent of $f  \in  M_p^{t,r}  (X) $. Set   $g_N := \chi_{B(0,N)}  \chi_{ [0,N] } (\|g\|_{^\ast X}) g $ for each $N \in \mathbb N$. Then using Lemma \ref{generate block} and the fact that $B(0,N)$ can be covered by at most $2^n$ dyadic cubes, we have $g_N \in \H_{p'}^{t',r'}(^\ast X)$. Using Theorem \ref{predual seq BM}, we obtain $ \| g_N \| _{  \H_{p'}^{t',r'}(^\ast X) }  \le L. $
	Since $g_N$  converges locally to $g$  in the weak topology of $  L^{p'}(^\ast X) $,
	using Theorem \ref{Fatou general}, we see that $g \in  \H_{p'}^{t',r'}(^\ast X) $.
\end{proof}

\subsection{Lattice property} \label{lattice space}
In \cite[Proposition 2.6]{H20}, Ho obtained the lattice property of the scalar-valued block spaces.
Since there are many spaces $X$ satisfying that $^\ast X$ is reflexive or $ X$ be separable, there is no unique criterion to define  which is bigger  when $x,y \in  {^\ast X} $. For example, when $X = \ell^q, q\in (1,\infty)$. First, we can say $y\in \ell^{q'}$ is not less than $x\in\ell^{q'}$ if $ \|x\|_{\ell^{q'}} \le \|y\|_{\ell^{q'}}  $. But, we can also define $y\in \ell^{q'}$ is not less than $x\in\ell^{q'}$ if for each $i \in \mathbb N$, $ | f_i| \le |g_i| $.  The first way can make us compare all elements in $\ell^{q'}$ but the second  be more precise.
Since we do not know the structure of $X$. We use the first way to describe the lattice property of  block spaces $\H_{p'}^{t',r'}(^\ast X) $. 

We do not show the  that block spaces $\H_{p'}^{t',r'}(^\ast X) $ have the lattice property.
So a natural question is the following:
\begin{problem}
	Let $X$ be a Banach space  such that $  ( ^\ast X )^ \ast = X $. Let $^\ast X$ be reflexive or $X$ be separable.
	Let $1< p<t<r<\infty$ 	or $1< p\le t<r=\infty$. 
	Let $g \in \H_{p'}^{t',r'}(^\ast X) $. If a  $^\ast X$-valued function $f$ satisfying $ \|f(x) \|_{^\ast X} \le \| g(x) \|_{^\ast X} $ a.e., can we obtain $f$  belongs to $\H_{p'}^{t',r'}(^\ast X) $?
\end{problem}

But if $X  = \ell^q, q \in (1,\infty)$, we have the following result.
\begin{lemma}
	Let $X  = \ell^q, q \in (1,\infty)$.
	Let $1< p<t<r<\infty$ 	or $1< p\le t<r=\infty$. 
	Then a vector valued function $\vec f$  belongs to $\H_{p'}^{t',r'}(\ell^{q'}) $ if and only if there exists a vector valued function  $\vec g \in \H_{p'}^{t',r'}(\ell^{q'})$ such that for each $i \in \mathbb N$, $|f_i(x)| \le g_i (x)$ a.e.
\end{lemma}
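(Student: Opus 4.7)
Both implications follow the same scheme: invoke the corollary at the end of Section \ref{property block}, which asserts that a ${}^\ast X$-valued function $\vec\psi$ belongs to $\mathcal{H}_{p'}^{t',r'}({}^\ast X)$ as soon as $\langle \vec\psi, \vec h\rangle \in L^1(\rn)$ for every $\vec h \in M_p^{t,r}(X)$. In our setting $X = \ell^q$ and ${}^\ast X = \ell^{q'}$, so the pairing is $\langle \vec u, \vec v\rangle = \sum_i u_i v_i$.

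For the ``only if'' direction, I would let $\vec g := (|f_i|)_{i \in \mathbb{N}}$, so that $|f_i| \le g_i$ holds trivially, and reduce everything to showing $\vec g \in \mathcal{H}_{p'}^{t',r'}(\ell^{q'})$. Given $\vec h \in M_p^{t,r}(\ell^q)$, introduce the sign twist $\tilde h_i(x) := |h_i(x)|\,\sgn(f_i(x))$ (with $\sgn(0) := 0$). Then $|\tilde h_i| \le |h_i|$ pointwise, hence $\|\tilde{\vec h}(x)\|_{\ell^q} \le \|\vec h(x)\|_{\ell^q}$ and $\tilde{\vec h} \in M_p^{t,r}(\ell^q)$ with norm controlled by $\|\vec h\|_{M_p^{t,r}(\ell^q)}$. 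A pointwise check gives $\sum_i f_i \tilde h_i = \sum_i |f_i||h_i| \ge 0$, so
\[
\int_\rn |\langle \vec g, \vec h\rangle|\,\d x \le \int_\rn \sum_i |f_i||h_i|\,\d x = \int_\rn \langle \vec f, \tilde{\vec h}\rangle\,\d x \le \|\vec f\|_{\mathcal{H}_{p'}^{t',r'}(\ell^{q'})} \|\vec h\|_{M_p^{t,r}(\ell^q)} < \infty
\]
by Theorem \ref{predual seq BM}(i) applied to the known element $\vec f \in \mathcal{H}_{p'}^{t',r'}(\ell^{q'})$. The corollary then delivers $\vec g \in \mathcal{H}_{p'}^{t',r'}(\ell^{q'})$.

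For the ``if'' direction, given $\vec g \in \mathcal{H}_{p'}^{t',r'}(\ell^{q'})$ with $|f_i| \le g_i$ a.e., set $\tilde h_i := |h_i|$ for an arbitrary $\vec h \in M_p^{t,r}(\ell^q)$; then $\|\tilde{\vec h}\|_{M_p^{t,r}(\ell^q)} = \|\vec h\|_{M_p^{t,r}(\ell^q)}$, and the pointwise chain
\[
|\langle \vec f(x), \vec h(x)\rangle| \le \sum_i |f_i(x)|\tilde h_i(x) \le \sum_i g_i(x)\tilde h_i(x) = \langle \vec g(x), \tilde{\vec h}(x)\rangle
\]
together with Theorem \ref{predual seq BM}(i) applied to $\vec g$ yields $\langle \vec f, \vec h\rangle \in L^1(\rn)$. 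The same corollary then produces $\vec f \in \mathcal{H}_{p'}^{t',r'}(\ell^{q'})$.

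The only subtle bookkeeping is the measurability and norm control of the sign-twisted $\tilde{\vec h}$, which is routine once a measurable selection of $\sgn(f_i(\cdot))$ is fixed componentwise. All substantive work --- the duality Theorem \ref{predual seq BM} and the Fatou-based membership corollary --- has already been established earlier in the paper, so I do not expect any genuine obstacle beyond these verifications.
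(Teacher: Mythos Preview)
Your argument is correct, but it follows a genuinely different path from the paper's proof. The paper works directly with block decompositions: given $\vec f=\sum_{j,k}\lambda_{j,k}\vec b_{j,k}\in\mathcal H_{p'}^{t',r'}(\ell^{q'})$, it sets $\vec g:=\sum_{j,k}|\lambda_{j,k}|\,(|b_{j,k,i}|)_{i}$, which is automatically a block expansion dominating $\vec f$ componentwise; conversely, given $|f_i|\le g_i$ and a decomposition $\vec g=\sum_{j,k}\lambda_{j,k}\vec b_{j,k}$, it writes $f_i=\sum_{j,k}\lambda_{j,k}\,\dfrac{f_i}{g_i}\,b_{j,k,i}$ (with the convention $0/0=0$) and observes that $\bigl(\tfrac{f_i}{g_i}b_{j,k,i}\bigr)_i$ is still a $(p',t',\ell^{q'})$-block because the multipliers have modulus at most~$1$. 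This gives immediately the sharp norm inequality $\|\vec f\|_{\mathcal H}\le\|\vec g\|_{\mathcal H}$.

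Your route instead leverages the Fatou-based membership criterion (the corollary following Theorem~\ref{Fatou general}, not the end of Section~\ref{property block}---it sits at the end of the Fatou subsection, before the lattice subsection, so there is no circularity). You test the candidate against all $\vec h\in M_p^{t,r}(\ell^q)$ by a componentwise sign twist, then invoke the pairing bound from Theorem~\ref{predual seq BM}(i). This is valid; the only thing to note is that Theorem~\ref{predual seq BM}(i) as stated bounds $\bigl|\int\langle\cdot,\cdot\rangle\bigr|$, but its proof (inequality~(\ref{f g le M H})) actually controls $\int|\langle\cdot,\cdot\rangle|$, which is what you need. Your approach yields membership cleanly but not the sharp norm comparison without further tracking of constants through the Fatou argument; the paper's constructive proof delivers $\|\vec f\|_{\mathcal H}\le\|\vec g\|_{\mathcal H}$ for free. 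Both methods exploit the same $\ell^q$-specific feature---the ability to act coordinatewise by bounded scalar multipliers---just on opposite sides of the duality.
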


\begin{proof}
	Let $\vec f \in  \H_{p'}^{t',r'}(\ell^{q'})$. Then there  exist a sequence $\{ \lambda_{j,k} \}_{ (j,k) \in \mathbb Z ^{1+n} }  \in \ell ^{r'} $  and a sequence  $(p',t',\ell^{q'})$-block $\{ \vec  b_{j,k} \}  _{ (j,k) \in \mathbb Z ^{1+n} } $  supported on  $Q_{j,k}   $
	such that $\vec f 	=\sum_{(j,k)\in\mathbb{Z}^{n+1}}\lambda_{j,k} \vec  b_{j,k} $  and $\| \lambda_{j,k} \|_{\ell ^{r'}}  \le \| \vec  f\|_{\H_{p'}^{t',r'}(\ell^{q'})  }    +\epsilon$.
	For each $ (j,k) \in \mathbb Z ^{1+n} $, put 
	\begin{equation*}
		\widetilde{\vec  b_{j,k}} = \{ 	\widetilde{  b_{j,k,i}}   \}_{i=1}^\infty := \{ 	| b_{j,k,i}|   \}_{i=1}^\infty
	\end{equation*}
	and 
	\begin{equation*}
		\vec 	g = \sum_{(j,k)\in\mathbb{Z}^{n+1}} |\lambda_{j,k}|	\widetilde{\vec  b_{j,k}}.
	\end{equation*}
	Then for each $i \in \mathbb N$,
	\begin{equation*}
		|f_i| = \left| \sum_{(j,k)\in\mathbb{Z}^{n+1}} \lambda_{j,k} b_{j,k,i} \right| \le  \sum_{(j,k)\in\mathbb{Z}^{n+1}}| b_{j,k,i}| |\lambda_{j,k}|	= g_i
	\end{equation*}
	and $\|\vec g\|_{\H_{p'}^{t',r'}(\ell^{q'}) } \le \| \lambda_{j,k} \|_{\ell ^{r'}} <\infty $.	
	
	Conversely, suppose that there exists $\vec g\in \H_{p'}^{t',r'}(\ell^{q'}) $ that satisfies $|f_i(x) | \le g_i(x)$ for each $i \in \mathbb N$ a.e. Decompose $\vec  g $ as $ \vec g = \sum_{(j,k)\in\mathbb{Z}^{n+1}}\lambda_{j,k} \vec  b_{j,k}$ such that $ \| \{  \lambda_{j,k} \} \|_{\ell ^{r'}}  \le \|\vec  g \|_{ \H_{p'}^{t',r'}(\ell^{q'})  }     +\epsilon $ where $\{ \lambda_{j,k} \}_{ (j,k) \in \mathbb Z ^{1+n} }  \in \ell ^{r'} $  and $\{  \vec  b_{j,k} \}  _{ (j,k) \in \mathbb Z ^{1+n} } $ is a sequence  $(p',t', \ell^{q'})$-block. 
	For each $i \in \mathbb N$, let $E_i := \{  y\in \rn : g_i (y) > 0 \}. $ By definition, if $x\in \rn \backslash E_i$, $f_i (x) = g_i (x) =0 .$  		
	Then
	\begin{equation*}
		\chi_{E_i } (x) =  \sum_{(j,k)\in\mathbb{Z}^{n+1}}\lambda_{j,k} \frac{1}{g_i(x)} b_{j,k,i} \chi_{E _i} (x),
	\end{equation*}
	and 
	\begin{equation*}
		f_i (x) =  \sum_{(j,k)\in\mathbb{Z}^{n+1}}\lambda_{j,k} \frac{f_i(x)}{g_i(x)} b_{j,k,i} \chi_{E_i } (x).
	\end{equation*}
	We define $0/0 =0$.
	Hence
	\begin{equation*}
		\vec f (x)= \sum_{(j,k)\in\mathbb{Z}^{n+1}}\lambda_{j,k} \left\{ \frac{f_i(x)}{g_i(x)} b_{j,k,i}  \right\}_{i=1}^\infty. 
	\end{equation*}
	
	Since $ |f_i (x) | / g_i (x) \le 1 $(note that we define $0/0=0$), the vector valued function $ \left\{ \frac{f_i(x)}{g_i(x)} b_{j,k,i}  \right\}_{i=1}^\infty $ is a $(p',t', \ell^{q'})$-block supported on $Q_{j,k}$. Thus 
	\begin{equation*}
		\|\vec  f\|_ {\H_{p'}^{t',r'}(\ell^{q'})  } \le \| \{  \lambda_{j,k} \} \|_{\ell ^{r'}}  \le  \| \vec g \|_{ \H_{p'}^{t',r'}(\ell^{q'})  }     +\epsilon <\infty.
	\end{equation*}
	Taking the infimum over all admissible sequences  $\lambda$, we also have
	\begin{equation*}
		\|\vec  f\|_ {\H_{p'}^{t',r'}(\ell^{q'})  } \le  \|\vec  g\|_ {\H_{p'}^{t',r'}(\ell^{q'})  }.
	\end{equation*}
	Thus the proof is complete.
\end{proof}

\subsection{Duality of  $M_p^{t,r} (X)$ when $X$ is reflexive and $1<p<t<r<\infty$}
Note that if $X$ is reflexive, $ ^\ast X = X ^\ast$.
Before showing the duality of  $M_p^{t,r} (X)$, we need some preparation. Recall that 
for each $k \in \mathbb Z$, for a  $X$-valued measurable function $f$, the  operator $E_k$  is defined by
\begin{equation*}
	E_k (f) (x) = \sum_{Q \in \D_k} \frac{1}{|Q|} \int_Q f (y) \d y \chi_Q (x), \quad  x\in \rn.
\end{equation*}
\begin{lemma} \label{E_k opera}
	Let $X$ be a Banach space. Let $1\le p <t <r <\infty $. Then for all $f \in M_p^{t,r} (X)$, for each $k\in \mathbb Z$,
	\begin{equation*}
		\| E_k (f ) \|_{ M_p^{t,r} (X)} \le C \|f  \|_{ M_p^{t,r} (X)}
	\end{equation*}
	where the positive constant $C$ depends only on $n,r,t$.
\end{lemma}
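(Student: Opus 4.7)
The plan is to split the sum defining $\|E_k(f)\|_{M_p^{t,r}(X)}^r$ according to whether the scale $j$ of the outer cube $Q_{j,m}$ is at most $k$ (the averaging scale) or strictly greater than $k$, and to handle each range by the same ideas used in the proof of Theorem \ref{E_k f to f} for the estimates $III$ and $IV$ appearing there.

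First I would treat the scales $j \le k$. For such $j$, each $Q_{j,m}$ is a disjoint union of cubes $Q \in \D_k$, and on each $Q$ the function $E_k(f)$ equals the constant average $|Q|^{-1}\int_Q f(y)\,\d y$. Jensen's inequality (in the Banach-valued form, i.e.\ Minkowski plus Hölder, exactly as in the proof of Theorem \ref{E_k f to f}) then gives
\[
\int_{Q_{j,m}} \|E_k(f)(y)\|_X^p \,\d y \;\le\; \int_{Q_{j,m}} \|f(y)\|_X^p \,\d y,
\]
so the contribution of $\{j\le k\}$ to $\|E_k(f)\|_{M_p^{t,r}(X)}^r$ is dominated by the corresponding piece of $\|f\|_{M_p^{t,r}(X)}^r$.

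For the scales $j > k$, each $Q_{j,m}$ is contained in a unique $Q \in \D_k$, on which $E_k(f)$ is constant with value $|Q|^{-1}\int_Q f(y)\,\d y$. I would use Hölder as in the estimate of $IV$ in Theorem \ref{E_k f to f}, obtaining
\[
|Q_{j,m}|^{r/t-r/p}\!\left(\int_{Q_{j,m}}\!\|E_k(f)\|_X^p\right)^{\!r/p}
\!\le |Q_{j,m}|^{r/t}|Q|^{-r/p}\!\left(\int_Q\!\|f\|_X^p\right)^{\!r/p}.
\]
Now, for a fixed $Q \in \D_k$, the number of $Q_{j,m}\subset Q$ at scale $j > k$ is $2^{(j-k)n}$, and $|Q_{j,m}|^{r/t} = 2^{-(j-k)nr/t}|Q|^{r/t}$. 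Summing over $m$ and then over $j>k$ produces the geometric factor
\[
\sum_{j=k+1}^{\infty} 2^{(j-k)n(1-r/t)} \;=\; C_{n,r,t} \;<\;\infty,
\]
which converges because $r > t$. Hence the contribution of $\{j>k\}$ is bounded by
\[
C_{n,r,t}\sum_{Q\in\D_k} |Q|^{r/t-r/p}\!\left(\int_Q \|f(y)\|_X^p\,\d y\right)^{\!r/p}
\;\le\; C_{n,r,t}\,\|f\|_{M_p^{t,r}(X)}^r.
\]

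Combining the two ranges and taking the $r$-th root yields the desired inequality with a constant depending only on $n,r,t$. Nothing here is hard; essentially every computation is already carried out in (and can be quoted from) the estimates of $III$ and $IV$ in the proof of Theorem \ref{E_k f to f}. The only point requiring a little care is the appeal to Jensen's inequality in the vector-valued setting, which is handled exactly as in that earlier proof via general Minkowski's inequality and Hölder's inequality.
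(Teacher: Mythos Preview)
Your proof is correct and follows essentially the same approach as the paper's own proof: both split the $\ell^r$-sum at the averaging scale $j=k$, control the $j\le k$ part by Jensen/H\"older to reduce to $\int_{Q_{j,m}}\|f\|_X^p$, and handle the $j>k$ part by the geometric sum $\sum_{j>k} 2^{(j-k)n(1-r/t)}<\infty$ after counting the $2^{(j-k)n}$ subcubes of each $Q\in\D_k$. Your explicit cross-reference to the estimates $III$ and $IV$ of Theorem~\ref{E_k f to f} is apt, since those computations are indeed the same ones reused here.
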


\begin{proof}
	We write out the norm $	\| E_k (f ) \|_{ M_p^{t,r} (X)}$ fully:
	\begin{align*}
	&	\| E_k (f ) \|_{ M_p^{t,r} (X)} ^r \\
	& = \left( \sum_{j=-\infty}^k +  \sum_{j=k+1}^\infty  \right) \sum_{R \in \D_j} \left(  |R|^{1/t-1/p} \left(   \int_R   \left\| \frac{1}{|Q|} \int_Q f (y) \d y \right\|_X^p \chi_Q (x) \d x \right)^{1/p} \right)^r \\
		& =: I +II.
	\end{align*}
	We first estimate $I$. By general Minkowski's inequality and H\"older's inequality, we obtain
	\begin{align*}
		I &= \sum_{j=-\infty}^k  \sum_{R \in \D_j} \left(  |R|^{1/t-1/p} \left(  \sum_{Q \in \D_k, Q \subset R}     \left\| \frac{1}{|Q|} \int_Q f (y) \d y \right\|_X^p |Q| \right)^{1/p} \right) ^r  \\
		& \le  \sum_{j=-\infty}^k  \sum_{R \in \D_j} \left(  |R|^{1/t-1/p} \left(  \sum_{Q \in \D_k, Q \subset R}    \frac{1}{|Q|}  \int_Q \| f (y)\|_X^p \d y  |Q| \right)^{1/p} \right) ^r  \\
		& =  \sum_{j=-\infty}^k  \sum_{R \in \D_j} \left(  |R|^{1/t-1/p} \left(   \int_R \| f (y)\|_X^p \d y   \right)^{1/p} \right)^r  \\
		& \le \|f\|_{M_p^{t,r} (X) } ^r.
	\end{align*}
	When $j \ge k+1$, denote by $R_{j-k} \in \D_k$  the $(j-k)$-th dyadic parent of $R \in \D_j$. In $II$,
	\begin{equation*}
		II =  \sum_{j=k+1}^\infty \sum_{R \in \D_j} \left(  |R|^{1/t}   \left\| \frac{1}{|R_{j-k}|} \int_{R_{j-k}} f (y) \d y \right\|_X \right)^r .
	\end{equation*}
	Note that given $S \in \D_k$, there are $2^{(j-k)n}$ cubes $R\in \D_j$  such that $R_{j-k} =S$. So we have
	\begin{align*}
		II & =  \sum_{j=k+1}^\infty 2^{nr (j-k) (1/r-1/t) }  \sum_{S \in \D _k} \left(  |S|^{1/t}   \left\| \frac{1}{|S|} \int_{S} f (y) \d y \right\|_X \right)^r \\
		& \le \sum_{j=k+1}^\infty 2^{nr (j-k) (1/r-1/t) }  \sum_{S \in \D _k} \left(  |S|^{1/t}   \left( \frac{1}{|S|} \int_{S} \| f (y)\|_X^p \d y \right)^{1/p}  \right)^r \\
		& \le \|f\|_{M_p^{t,r} (X) }^r \sum_{j=k+1}^\infty 2^{nr (j-k) (1/r-1/t) } \\
		& \le C_{n,r,t}  \|f\|_{M_p^{t,r} (X) }^r .
	\end{align*}
	Combining the estimates of $I$ and $II$, we obtain 
	\begin{equation*}
		\| E_k (f ) \|_{ M_p^{t,r} (X)} \le   C_{n,r,t} \|f  \|_{ M_p^{t,r} (X)}.
	\end{equation*}
	Thus the proof is complete.
\end{proof}

\begin{lemma}[Theorem 3.18, \cite{B11}]  \label{sub converge weak topo}
	Assume that $E$ is a reflexive Banach space and let $\{ x_n\}_{n\in \mathbb N}$ be a bounded
	sequence in $E$. Then there exists a subsequence $\{ x_{n_k} \}_{ k \in \mathbb N}$ that converges in the weak
	topology $\sigma (E, E^\ast)$.
\end{lemma}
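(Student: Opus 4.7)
The plan is to reduce to the separable reflexive setting by passing to the closed linear span of the sequence, and then extract a weak-$\ast$ convergent subsequence in the bidual. First I set $F := \overline{\operatorname{span}\{x_n : n \in \mathbb N\}}$. As the closure of the linear span of a countable set, $F$ is separable, and as a closed subspace of the reflexive space $E$, it is itself reflexive (closed subspaces of reflexive spaces are reflexive, since the natural embedding into the bidual inherits surjectivity).

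Next I would exploit reflexivity to upgrade separability of $F$ to separability of $F^\ast$. Since $F = F^{\ast\ast}$ (reflexivity), $F^{\ast\ast}$ is separable; combined with the standard fact that separability of the dual implies separability of the space (applied to $F^\ast$ whose dual $F^{\ast\ast}$ is separable), I would conclude $F^\ast$ is separable. With $F^\ast$ separable, the Banach-Alaoglu theorem yields that the closed unit ball of $(F^\ast)^\ast = F^{\ast\ast} = F$ is weak-$\ast$ compact and, crucially, metrizable in the weak-$\ast$ topology. Hence the bounded sequence $\{x_n\} \subset F$ admits a subsequence $\{x_{n_k}\}$ that converges weak-$\ast$ in $F^{\ast\ast}$; by reflexivity this is the same as weak convergence in $F$ to some $x \in F \subset E$.

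Finally, I lift weak convergence from $F$ to $E$: for any $\varphi \in E^\ast$, the restriction $\varphi|_F$ lies in $F^\ast$, so $\varphi(x_{n_k}) = \varphi|_F(x_{n_k}) \to \varphi|_F(x) = \varphi(x)$, giving $x_{n_k} \to x$ in $\sigma(E, E^\ast)$.

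The main obstacle is the passage from separability of $F$ to separability of $F^\ast$; without reflexivity this step fails (e.g.\ $\ell^1$ is separable but $(\ell^1)^\ast = \ell^\infty$ is not), so reflexivity must be used essentially at exactly this point. Everything else reduces to standard applications of Banach-Alaoglu, metrizability of the weak-$\ast$ topology on bounded subsets of duals of separable spaces, and Hahn-Banach-style restriction of functionals.
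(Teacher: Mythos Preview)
Your proof is correct and is precisely the classical argument: pass to the separable reflexive closed span, use reflexivity to get separability of the dual, then apply Banach--Alaoglu with metrizability to extract a weakly convergent subsequence. The paper does not supply its own proof of this lemma; it is quoted directly as Theorem~3.18 of Brezis's book, and your argument is essentially the one found there.
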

Now we are ready  to prove the dual of  $  M_p^{t,r} (X) $ is $ \H_{p'}^{t',r'}(  X ^\ast)  $.
\begin{theorem} \label{dual BMX}
	Let $X$ be reflexive. Let $1<p<t<r<\infty$. Then the dual of $  M_p^{t,r} (X) $ is $ \H_{p'}^{t',r'}(  X ^\ast)  $.
\end{theorem}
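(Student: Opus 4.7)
The plan is to deduce Theorem \ref{dual BMX} from Theorem \ref{predual seq BM} by first establishing that $\H_{p'}^{t',r'}(X^\ast)$ is \emph{reflexive} when $X$ is reflexive and $1<p<t<r<\infty$. Once reflexivity is in hand, Theorem \ref{predual seq BM} (whose hypotheses are met because $X$ reflexive forces $^\ast X = X^\ast$ reflexive) gives $(\H_{p'}^{t',r'}(X^\ast))^\ast = M_p^{t,r}(X)$ isometrically, so dualizing once more and invoking reflexivity yields
\begin{equation*}
(M_p^{t,r}(X))^\ast = (\H_{p'}^{t',r'}(X^\ast))^{\ast\ast} = \H_{p'}^{t',r'}(X^\ast).
\end{equation*}
Unwinding the canonical bidual identification shows that a typical element $g \in \H_{p'}^{t',r'}(X^\ast)$ is sent to the functional $f \mapsto \int_{\rn} \langle f(x), g(x)\rangle \d x$ on $M_p^{t,r}(X)$, which is the natural pairing and the content of the theorem. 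The easy half of that pairing is the same H\"older-type estimate as in Theorem \ref{predual seq BM}(i) with the roles of $f$ and $g$ swapped.

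The entire argument thus reduces to proving reflexivity of $\H_{p'}^{t',r'}(X^\ast)$, and I would do this by realizing it as a quotient of a reflexive Banach space. Introduce the weighted $\ell^{r'}$-direct sum
\begin{equation*}
Y := \Big\{(f_{j,k})_{(j,k)\in\mathbb{Z}^{n+1}} : f_{j,k} \in L^{p'}(X^\ast)(Q_{j,k}),\ \|(f_{j,k})\|_Y < \infty\Big\},
\end{equation*}
equipped with
\begin{equation*}
\|(f_{j,k})\|_Y := \Big(\sum_{(j,k)\in\mathbb{Z}^{n+1}} \big(|Q_{j,k}|^{1/t'-1/p'}\,\|f_{j,k}\|_{L^{p'}(X^\ast)}\big)^{r'}\Big)^{1/r'}.
\end{equation*}
Because $1<r'<\infty$ and each Bochner space $L^{p'}(X^\ast)(Q_{j,k})$ is reflexive (as $1<p'<\infty$ and $X^\ast$ is reflexive), $Y$ is reflexive by the classical duality $(\bigoplus_{\ell^{r'}} E_i)^\ast = \bigoplus_{\ell^{r}} E_i^\ast$ for reflexive summands. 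Define the summation map $S : Y \to \H_{p'}^{t',r'}(X^\ast)$ by $S((f_{j,k})) := \sum_{j,k} f_{j,k}$; setting $\lambda_{j,k} := |Q_{j,k}|^{1/t'-1/p'}\|f_{j,k}\|_{L^{p'}(X^\ast)}$ and $b_{j,k} := f_{j,k}/\lambda_{j,k}$ (where $\lambda_{j,k}\neq 0$) shows that each $b_{j,k}$ is a genuine $(p',t',X^\ast)$-block on $Q_{j,k}$, so by Definition \ref{def block space}, $S$ is well-defined with $\|S\|\le 1$ and surjective onto $\H_{p'}^{t',r'}(X^\ast)$. By the open mapping theorem, $\H_{p'}^{t',r'}(X^\ast) \cong Y/\ker S$ with equivalent norms, and since quotients of reflexive Banach spaces by closed subspaces are reflexive, $\H_{p'}^{t',r'}(X^\ast)$ is reflexive.

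The step I expect to require the most care is the verification that $S$ is simultaneously well-defined (the series $\sum_{j,k} f_{j,k}$ must converge, which uses Proposition \ref{block sum converge} together with the imposed $\ell^{r'}$-summability), surjective onto $\H_{p'}^{t',r'}(X^\ast)$, and bounded by $1$; all of this amounts to unpacking the block normalization in Definition \ref{def block space} carefully and invoking the definition of $\|\cdot\|_{\H_{p'}^{t',r'}(X^\ast)}$ as an infimum. Once $S$ is in place, reflexivity of $\H_{p'}^{t',r'}(X^\ast)$ is immediate from the abstract quotient argument, and Theorem \ref{dual BMX} follows formally from Theorem \ref{predual seq BM} and the canonical bidual identification.
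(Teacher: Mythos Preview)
Your proposal is correct but takes a genuinely different route from the paper. The paper argues directly: given $L\in(M_p^{t,r}(X))^\ast$, it composes $L$ with the averaging operators $E_k$ restricted to balls $B_\ell$, uses the $L^p(X)$--$L^{p'}(X^\ast)$ duality to produce $g_{k,\ell}$, passes to limits via the Fatou property (Theorem \ref{Fatou general}) and weak compactness (Lemma \ref{sub converge weak topo}), and finally invokes Theorem \ref{E_k f to f} together with the density of $L_c^\infty(X)$ to identify $L$ with pairing against the limit $g$. Your route is shorter and more structural: you bypass the Fatou property and all the $E_k$ machinery by realizing $\H_{p'}^{t',r'}(X^\ast)$ as a quotient of the reflexive $\ell^{r'}$-sum $Y$, thereby proving reflexivity of $\H$ \emph{first} and then reading off $(M_p^{t,r}(X))^\ast=\H^{\ast\ast}=\H$ from Theorem \ref{predual seq BM}. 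In effect you obtain Corollary \ref{reflexive BMX} before Theorem \ref{dual BMX}, reversing the paper's order. The paper's argument is more constructive and would better survive in settings where the quotient picture is unavailable; yours is cleaner here and makes the reflexivity corollary immediate. One small remark: the open mapping theorem is not actually needed, since the quotient norm on $Y/\ker S$ \emph{equals} $\|\cdot\|_{\H_{p'}^{t',r'}(X^\ast)}$ by the very definition of the block norm as an infimum over admissible decompositions; but this does no harm.
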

\begin{proof}
	Let $L \in  (  M_p^{t,r} (X) )^\ast$. Then for any $\ell \in \mathbb N$ and $k \in \mathbb Z$, the mapping $f \in L^p (X)  \mapsto L (   E_k ( \chi_{B_\ell} f )  )  \in \mathbb R $  is a bounded linear mapping. 
	Indeed, for any $f  \in L^p (X) $, from the proof of Lemma \ref{E_k opera}, using $1<p<t<r<\infty$, we have
	\begin{align*}
		& \| E_k (f) \|_{ M_p^{t,r} (X) } ^r \\
		 & \le \sum_{j=-\infty}^k  \sum_{R \in \D_j} \left(  |R|^{1/t-1/p} \left(   \int_R \| f (y)\|_X^p \d y   \right)^{1/p} \right)^r  \\
		&	\quad + \sum_{j=k+1}^\infty 2^{nr (j-k) (1/r-1/t) }  \sum_{S \in \D _k} \left(  |S|^{1/t}   \left( \frac{1}{|S|} \int_{S} \| f (y)\|_X^p \d y \right)^{1/p}  \right)^r \\
		& \le \sum_{j=-\infty}^k  2^{ -jn (r/t-r/p)}  \left( \sum_{R \in \D_j}    \int_R \| f (y)\|_X^p \d y   \right)^{r/p}  \\
		& \quad + \sum_{j=k+1}^\infty 2^{nr (j-k) (1/r-1/t) }   2^{-knr (1/t-1/p)}     \left( \sum_{S \in \D _k} \int_{S} \| f (y)\|_X^p \d y \right)^{r/p}   \\
		& \lesssim \|f\|_{L^p (X)} ^r,
	\end{align*}
	where the implicit positive constant depends on $k$. Hence, any $\ell \in \mathbb N$, $k \in \mathbb Z$ and any $f  \in L^p (X) $,
	\begin{align} \label{L bound Lp}
		\nonumber
		|  L (   E_k ( \chi_{B_\ell} f )  ) | & \le \|L\|_{ M_p^{t,r} (X) \to \mathbb R } \| E_k ( \chi_{B_\ell} f ) \|_{  M_p^{t,r} (X) }  \\
		& \lesssim  \|L\|_{ M_p^{t,r} (X) \to \mathbb R } \|  \chi_{B_\ell} f  \|_{ L^p (X) }.
	\end{align}
	Thus the mapping $f \in L^p (X)  \mapsto L (   E_k ( \chi_{B_\ell} f )  )  \in \mathbb R $  is a bounded linear mapping. 
	
	By the duality $L^p (X)   -L^{p'} (X^\ast) $, there exists $g_{k,\ell} \in L^{p'} (X^\ast)$  such that
	\begin{equation} \label{L k l f =}
		L ( E_k ( \chi_{B_\ell } f)   )  =\int_\rn \langle f(x), g_{k,\ell} (x) \rangle \d x
	\end{equation}
	for all $f \in L^{p} (X)$. Since 
	\begin{align*}
		\int_\rn \langle f(x), g_{k,\ell} (x) \rangle \d x & = 	L ( E_k ( \chi_{B_\ell f})   ) = 	L ( E_k (  \chi_{B_{\ell+1}} \chi_{B_\ell f})   )  \\
		& = \int_\rn \chi_{B_\ell} (x) \langle f(x), g_{k,\ell+1} (x) \rangle \d x
	\end{align*}
	for all $f \in L^{p} (X)$, we have $ g_{k,\ell}  = g_{k,\ell+1} \chi_{B_\ell} $ almost everywhere. Therefore, the limit $g_k := \lim_{\ell \to \infty} g_{k,\ell}$ exists almost everywhere and belongs to $L_{\operatorname{loc}}^{p'} (X^\ast) $.
	
	For any  $\ell \in \mathbb N$, $k \in \mathbb Z$, there exist finitely many dyadic cubes $\{ Q_j\}_{j=1}^{2^n}  \subset \D _{\nu}$ for some $\nu  \in \mathbb Z$ such that supp $g_{k,\ell} \subset\cup_{j=1}^{2^n}  Q_j  $. From this, it is  easily to see  that, for $m\in \mathbb N$, $\ell \in \mathbb N$, $k \in \mathbb Z$, $\chi_{[0,m]} ( \| g_{k,\ell} \|_{X^\ast}) g_{k,\ell} \in  \H_{p'}^{t',r'}(  X ^\ast) $. Now we prove for any  $\ell \in \mathbb N$, $k \in \mathbb Z$,
	$\{ \chi_{[0,m]} ( \| g_{k,\ell} \|_{X^\ast}) g_{k,\ell} \}_{m \in \mathbb N}$
	is a bounded  sequence in $\H_{p'}^{t',r'}(  X ^\ast)$. Indeed, by Theorem \ref{predual seq BM}, we have
	\begin{align*}
		& \|\chi_{[0,m]} ( \| g_{k,\ell} \|_{X^\ast} ) g_{k,\ell} \|_{\H_{p'}^{t',r'}(  X ^\ast) } \\
		& = \max_{ \|h\|_{ M_p^{t,r}  (X) }  \le  1  } \int_\rn  | \langle  \| \chi_{[0,m]} ( \| g_{k,\ell} \|_{X^\ast} ) g_{k,\ell} , h (x)  \rangle | \d x  \\
		& \le  \max_{ \|h\|_{ M_p^{t,r}  (X) }  \le  1  } \int_\rn  | \langle   g_{k,\ell} , h (x)  \rangle| \d x  \\
		& 	=  \sup_{ \|h\|_{ M_p^{t,r}  (X) }  \le  1  } L ( E_k (\chi_{B_\ell}   \sgn ( \langle g_{k,\ell}, h   \rangle ) h )  )  \\
		& \le  \sup_{ \|h\|_{ M_p^{t,r}  (X) }  \le  1  }   \|L\|_{ M_p^{t,r} (X) \to \mathbb R }  \|E_k \|_{  M_p^{t,r} (X) \to  M_p^{t,r} (X) } \|  \sgn ( \langle g_{k,\ell}, h   \rangle ) h  \|_{M_p^{t,r} (X) } \\
		& \lesssim  \|L\|_{ M_p^{t,r} (X) \to \mathbb R }  ,
	\end{align*}
	where the implicit positive constant is independent of both $k$ and $\ell$. Thus,   $\{ \chi_{[0,m]} ( \| g_{k,\ell} \|_{X^\ast}) g_{k,\ell} \}_{m \in \mathbb N}$
	is a bounded  sequence in $\H_{p'}^{t',r'}(  X ^\ast)$  for any  $\ell \in \mathbb N$, $k \in \mathbb Z$. 
	Moreover, from (\ref{L bound Lp}), (\ref{L k l f =}), and the Lebesgue dominated convergence theorem, it follows that $\chi_{[0,m]} ( \| g_{k,\ell} \|_{X^\ast}) g_{k,\ell} $  converges locally to $g_{k,\ell}$ in the weak topology of $L^{p'} (X^\ast)$ as $m \to \infty$. Thus, by Lemma \ref{Fatou general}, we obtain $g_{k,\ell} \in \H_{p'}^{t',r'}(  X ^\ast) $ and for any $\ell \in \mathbb N$, $k \in \mathbb Z$, 
	\begin{equation*}
		\| g_{k,\ell} \|_{ \H_{p'}^{t',r'}(  X ^\ast) }  \lesssim  \|L\|_{ M_p^{t,r} (X) \to \mathbb R }
	\end{equation*}
	where the implicit positive constant is independent of both $k$ and $\ell$.
	
	Notice that, for any given $k\in \mathbb Z$  and any given compact set $F \subset \rn$, there exists $\tilde \ell \in \mathbb N$, depending on both $k$ and $F$, such that
	\begin{equation*}
		\chi_F	g_k  = \chi_F g_{k,\tilde \ell}.
	\end{equation*}
	From this, (\ref{L bound Lp}), (\ref{L k l f =}), and the Lebesgue dominated convergence theorem, we obtain, for any $k \in \mathbb Z$, $  g_{k,\ell}$ converges locally to $g_k$  in the weak topology of $L^{p'} (X^\ast)$ as $\ell \to \infty$. Applying this and Lemma \ref{Fatou general} again, we  obtain, for any $k \in \mathbb Z$,
	\begin{equation*}
		\| g_{k} \|_{ \H_{p'}^{t',r'}(  X ^\ast) }  \lesssim  \|L\|_{ M_p^{t,r} (X) \to \mathbb R }
	\end{equation*}
	where the implicit positive constant is independent of  $k$.
	Since $g_k \in \H_{p'}^{t',r'}(  X ^\ast) $, for any $k \in \mathbb Z$, and almost everywhere $x\in \rn$,
	\begin{equation*}
		g_k (x) =\sum_{Q \in \D} \lambda_{k,Q} b_{k,Q}(x)
	\end{equation*}
	where $\|  \{\lambda_{k,Q} \}_{Q \in \D}\|_{ \ell^{r'}  } \le (1+\epsilon) \| g_k \|_{ \H_{p'}^{t',r'}(  X ^\ast)} $
	and $b_{k,Q}$ is a $ (p', t', X^\ast)$ block supported on $Q$. 
	It is easy to see that for any  $k \in \mathbb Z$, $Q\in \D$, $| \lambda_{k,Q}| \le  (1+\epsilon) \| g_k \|_{ \H_{p'}^{t',r'}(  X ^\ast)}$. We also may assume that $  \lambda_{k,Q} \ge 0$ since $ - b_{\ell,Q} $ is also a $ (p',t', {^\ast X} ) $-block supported on $ Q $.
	
	By this and Lemma \ref{sub converge weak topo}, there exists a subsequence of $ \{	g_k \}_{k\in \mathbb Z}$ (we still denote it by $ \{	g_k \}_{k\in \mathbb Z}$) such that for any $Q \in \D$, 
	\begin{equation*}
		\lambda_Q := \lim_{k \to \infty}  \lambda_{k,Q}
	\end{equation*}
	exits in $[0,\infty)$ and 
	\begin{equation*}
		b_Q :=  \lim_{k \to \infty}  b_{k,Q}
	\end{equation*}
	exists in the weak topology of $L^{p'} (X^\ast)$. Moreover, for each $Q\in \D$,
	\begin{align*}
		\| b_Q \|_{L^{p'} (X^\ast) } & =\sup_{ \|h\|_{L^p (X) } \le 1 } \left| \int_\rn \langle b_Q (x), h(x) \rangle \d x \right|  \\
		& \le \lim_{ k \to \infty } \sup_{ \|h\|_{L^p (X) } \le 1 } \left| \int_\rn \langle b_{k,Q} (x), h(x) \rangle \d x \right|  \\
		& \le |Q|^{1/t-1/p}.
	\end{align*}
	Hence $b_Q$ is a $ (p',t', {^\ast X} ) $-block supported on $ Q $. Applying the Fatou Lemma,
	we have
	\begin{equation*}
		\|  \{\lambda_{Q} \}_{Q \in \D}\|_{ \ell^{r'}  } \le \liminf_{k\to \infty}	\|  \{\lambda_{k,Q} \}_{Q \in \D}\|_{ \ell^{r'}  } \le (1+\epsilon) \| g_k \|_{ \H_{p'}^{t',r'}(  X ^\ast)}  \lesssim  \|L\|_{ M_p^{t,r} (X) \to \mathbb R } .
	\end{equation*}
	By this, and Proposition \ref{block sum converge}, we have $g : = \sum_{Q\in \D}\lambda_{Q} b_{Q}(x) \in  \H_{p'}^{t',r'}(  X ^\ast)  $. We have proved in Theorem \ref{Fatou general}, for any $Q\in \D$,
	\begin{equation} \label{int g_k =int g}
		\lim_{k\to \infty} \int_Q g_k (x) \d x = \int_Q g (x) \d x.
	\end{equation}
	
	Now we show that there exists a unique $g \in \H_{p'}^{t',r'}(  X ^\ast)$ such that, for any $h \in M_p^{t,r} (X) $, 
	\begin{equation*}
		L (h) = \int_\rn \langle g (x), h (x) \rangle \d x .
	\end{equation*} 
	Since $L_c^\infty (X)$ is dense in $M_p^{t,r} (X) $ when $1<p<t<r<\infty$, we may assume that $h \in L_c^\infty (X)$.
	
	From (\ref{L k l f =}),  the proven conclusion that $g_k = \lim_{\ell \to \infty} g_{k,\ell}$ exists almost everywhere, and the Lebesgue dominated convergence theorem, we deduce that, for any $h \in L_c^\infty (X) \subset M_p^{t,r} (X) $, any $k \in \mathbb N$,
	\begin{align} \label{L e_K h}
		L (E_k (h) )  = \lim_{\ell \to \infty } 	L (E_k (h  \chi_{B_\ell} ) )  = \lim_{\ell \to \infty } \int_\rn \langle h (x) , g_{k,\ell} (x) \rangle \d x  = \int_\rn \langle h (x) , g_{k} (x) \rangle \d x 
	\end{align}  
	where the first equality holds true because $h $ has a compact support. From Theorem \ref{E_k f to f}, we have
	\begin{equation*}
		\lim_{k\to \infty} \| E_k (h) -h \|_{M_p^{t,r} (X) } =0.
	\end{equation*}
	Applying this, the continuity of $ L,$ (\ref{L e_K h}) and (\ref{int g_k =int g}), we obtain
	\begin{equation*}
		L (h) =  \lim_{k\to \infty} L (E_k (h) ) = \lim_{k\to \infty} \int_\rn \langle h (x) , g_{k} (x) \rangle \d x = \int_\rn \langle h (x) , g (x) \rangle \d x
	\end{equation*}
	for all $h \in L_c^\infty (X)  $. Since $L_c^\infty (X)$  is dense in $M_p^{t,r} (X) $  and $g \in \H_{p'}^{t',r'}(  X ^\ast)$, we still have 
	\begin{equation*}
		L (h) =   \int_\rn \langle h (x) , g (x) \rangle \d x
	\end{equation*}
	for all $ h \in M_p^{t,r} (X)$. Thus, $L$  is realized by $g \in \H_{p'}^{t',r'}(  X ^\ast)$ and the proof is complete.
\end{proof}

\begin{corollary}\label{reflexive BMX}
	Let $X$ be reflexive. Let $1<p<t<r<\infty$. Then  $  M_p^{t,r} (X) $ is reflexive.
\end{corollary}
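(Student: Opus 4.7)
The plan is to chain together Theorems \ref{predual seq BM} and \ref{dual BMX} to identify the bidual of $M_p^{t,r}(X)$ with itself via the canonical embedding. The key observation is that when $X$ is reflexive, we have $^\ast X = X^\ast$ and $(X^\ast)^\ast = X^{\ast\ast} = X$, so $X^\ast$ qualifies as a Banach space whose dual is $X$. Consequently, Theorem \ref{predual seq BM} is applicable with the role of $X$ there played by $X^\ast$ and the role of $^\ast X$ played by $X$, and it yields $\bigl(\mathcal{H}_{p'}^{t',r'}(X^\ast)\bigr)^\ast = M_p^{t,r}(X)$ isometrically via the pairing
\[
\langle g,f\rangle := \int_{\rn}\langle g(x),f(x)\rangle\,\d x,\qquad g\in \mathcal{H}_{p'}^{t',r'}(X^\ast),\ f\in M_p^{t,r}(X).
\]

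Next, I would invoke Theorem \ref{dual BMX} (which requires reflexivity of $X$ and $1<p<t<r<\infty$) to realize $\bigl(M_p^{t,r}(X)\bigr)^\ast = \mathcal{H}_{p'}^{t',r'}(X^\ast)$ via the same duality pairing. Composing these identifications gives an isometric isomorphism
\[
\bigl(M_p^{t,r}(X)\bigr)^{\ast\ast} = \bigl(\mathcal{H}_{p'}^{t',r'}(X^\ast)\bigr)^\ast = M_p^{t,r}(X).
\]

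The step that needs care, and which is the main obstacle, is verifying that this composite isomorphism coincides with the canonical embedding $J\colon M_p^{t,r}(X)\to M_p^{t,r}(X)^{\ast\ast}$ rather than some other isometry. To handle this, I would fix $f\in M_p^{t,r}(X)$, let $\Phi_f\in \bigl(\mathcal{H}_{p'}^{t',r'}(X^\ast)\bigr)^\ast$ be the functional $g\mapsto \int_{\rn}\langle g(x),f(x)\rangle\,\d x$ provided by Theorem \ref{predual seq BM}, and check that for every $L\in \bigl(M_p^{t,r}(X)\bigr)^\ast$, which by Theorem \ref{dual BMX} arises as $L(h)=\int_{\rn}\langle h(x),g_L(x)\rangle\,\d x$ for a unique $g_L\in \mathcal{H}_{p'}^{t',r'}(X^\ast)$, the value $\Phi_f(g_L)$ equals $L(f)$. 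Both sides reduce to $\int_{\rn}\langle g_L(x),f(x)\rangle\,\d x$, so the identification is precisely $J(f)$; this shows $J$ is surjective and hence $M_p^{t,r}(X)$ is reflexive. Since $L_c^\infty(X)$ is dense in $M_p^{t,r}(X)$ (established in Section \ref{pre}), the pairing is absolutely convergent on a dense subspace, so the integrals are well defined as in the proof of Theorem \ref{dual BMX}. Once the two pairings are seen to agree, reflexivity follows immediately without further computation.
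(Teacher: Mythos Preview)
Your proof is correct and follows exactly the approach the paper intends; the corollary is stated without proof immediately after Theorem \ref{dual BMX}, and your chaining of Theorems \ref{predual seq BM} and \ref{dual BMX} together with the verification that the composite is the canonical embedding is precisely the standard argument. One small slip: in your substitution for Theorem \ref{predual seq BM} the roles are reversed---since $X$ is reflexive the paper's $^\ast X$ equals $X^\ast$ while the paper's $X$ stays as $X$ (as noted at the start of the duality subsection)---but your stated conclusion $\bigl(\mathcal{H}_{p'}^{t',r'}(X^\ast)\bigr)^\ast = M_p^{t,r}(X)$ is the correct one, so the argument is unaffected.
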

\begin{remark}
	Let $X$ be $\mathbb R$, then Theorem \ref{dual BMX} becomes \cite[Theorem 5.5]{HNSH23} and Corollary \ref{reflexive BMX}  becomes \cite[Remark 5.7]{HNSH23}.
\end{remark}

\section{Hardy-Littlewood maximal function on block spaces}\label{HL block}
The Hardy-Littlewood maximal operator is important in harmonic analysis. Therefore, in this section, we study the boundedness of maximal average operator on $X$-valued Bourgain-Morrey spaces and $^\ast X$-valued block spaces. Since we do not know the accurate form of $X$, it is difficult to define the precise maximal average operator. However, the vector-valued Bourgain-Morrey is important because it is useful in establishing the theory of related function spaces.

The classical definition of the maximal operator immediately extends to the context of $X$-valued functions simply by replacing the absolute values by the norms:
\begin{equation*}
	\M _X (f) (x) = \sup_{Q \ni x}\frac{1}{|Q|} \int_Q \| f(y) \|_X \d x.
\end{equation*}
Since $\M_X (f) = \M ( \|f\|_X )$, from \cite[Lemma 4.1]{HNSH23} (or by \cite[Corollary 4.7]{ZSTYY23}, or by \cite[Theorem 4.5]{HLY23}, or by \cite[Theorem 5.5]{ZYZ24}), we immediately obtain the following result.
\begin{lemma}
	Let $1<  p<  t < r <\infty $ or $1<  p\le t < r =\infty$. Then $\M_X$ is a bounded  operator from $M_p^{t,r} (X)$ to $M_p^{t,r} $.
\end{lemma}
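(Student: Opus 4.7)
The plan is a one-line reduction to the scalar boundedness result. The crucial observation, already made in the paragraph preceding the lemma, is the pointwise identity $\M_X(f) = \M(\|f\|_X)$: the $X$-structure drops out as soon as one takes the norm inside the averaging integral, leaving a purely scalar problem for the nonnegative function $\|f\|_X$.

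Given this, I would proceed as follows. By the defining equality $\|g\|_{M_p^{t,r}(X)} = \|\,\|g\|_X\,\|_{M_p^{t,r}}$, the hypothesis $f \in M_p^{t,r}(X)$ is equivalent to $\|f\|_X \in M_p^{t,r}$ with equal norms. Interpreting the conclusion ``boundedness on $M_p^{t,r}(X)$'' as the estimate $\|\M_X(f)\|_{M_p^{t,r}} \lesssim \|f\|_{M_p^{t,r}(X)}$ (which is the only sensible reading, since $\M_X(f)$ is scalar-valued), the desired inequality reduces to
\begin{equation*}
\|\M(\|f\|_X)\|_{M_p^{t,r}} \lesssim \|\,\|f\|_X\,\|_{M_p^{t,r}},
\end{equation*}
which is exactly the scalar boundedness already cited from \cite[Lemma 4.1]{HNSH23} (with the alternative references \cite{ZSTYY23, HLY23, ZYZ24}) in the stated parameter range.

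There is no real obstacle here and no genuinely vector-valued ingredient is invoked. The entire content of the lemma is that the $X$-valued Bourgain--Morrey norm was \emph{defined} precisely so that the passage from the scalar boundedness of $\M$ on $M_p^{t,r}$ to the boundedness of $\M_X$ on $M_p^{t,r}(X)$ becomes automatic via the factorization $\M_X = \M \circ \|\cdot\|_X$. The only incidental check is that $\|f\|_X \in L_{\operatorname{loc}}^p$, which is built into the definition of $L_{\operatorname{loc}}^p(X)$, ensuring that the scalar maximal function $\M(\|f\|_X)$ is well defined before one even invokes the scalar theorem.
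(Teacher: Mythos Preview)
Your proposal is correct and matches the paper's approach exactly: the paper gives no separate proof at all, but simply states in the sentence preceding the lemma that since $\M_X(f) = \M(\|f\|_X)$, the result follows immediately from the scalar boundedness in \cite[Lemma 4.1]{HNSH23} (and the alternative references), which is precisely your reduction.
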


\begin{lemma} \label{norm equ}
	Let $0< p\le t \le r \le \infty $.  Then $f \in M_{p}^{t,r} (X)$ if and only if $f \in L_{\mathrm{loc}}^{p} (X)$ and
	\begin{equation*}
		[f]_{ M_{p}^{t,r} (X) } := \left(  \int_0^\infty \int_\rn \left( |B(y,t)| ^{1/t -1/p -1/r}  \left\| \|f\|_{X} \right\|_{L^p (B(y,t)  )}  \right) ^r \d y \frac{\d t}{t}   \right)^{1/r} <\infty,
	\end{equation*}
	with the usual modifications made when $r =\infty$. Moreover,
	\begin{equation*}
		\|f \|_{ M_{p}^{t,r} (X)}  \approx [f]_{ M_{p}^{t,r} (X) } .
	\end{equation*}
\end{lemma}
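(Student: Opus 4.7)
My plan is to reduce the statement to the scalar case. By the very definitions,
$\|f\|_{M_{p}^{t,r}(X)} = \|\|f\|_{X}\|_{M_{p}^{t,r}}$ and $[f]_{M_{p}^{t,r}(X)} = [\|f\|_{X}]_{M_{p}^{t,r}}$, so the claim reduces to proving the scalar equivalence for the nonnegative function $g := \|f\|_{X} \in L_{\mathrm{loc}}^{p}$. To avoid notational conflict I would rename the integration variable $t$ appearing as a ball radius by $s$; the symbol $t$ remains the fixed Morrey exponent. The scalar equivalence is the direct analogue of the integral-norm characterization of Besov-Bourgain-Morrey spaces established in \cite{ZSTYY23}, so the proof follows that template.

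For the direction $[g] \lesssim \|g\|_{M_{p}^{t,r}}$, I would split the $s$-integration into dyadic annuli $s \in [2^{-v-1}, 2^{-v}]$, $v \in \mathbb{Z}$. For such $s$ and any $y \in \rn$, the ball $B(y,s)$ meets only a bounded (depending on $n$) number of cubes in $\mathcal{D}_{v}$, and $|B(y,s)| \approx |Q|$ for each such cube $Q$. Writing $\|g\|_{L^{p}(B(y,s))}^{p}$ as a sum over these cubes and raising to the $r/p \ge 1$ power (using that the number of terms is bounded) costs only a dimensional constant. By Fubini, the $y$-integration against $\chi_{\{Q\cap B(y,s)\ne\emptyset\}}(y)$ contributes a factor of order $|Q_{v,m}|$, which combines with the $|B(y,s)|^{-1}$ factor already present in the $r$-th power to produce, at level $v$, the discrete sum $\sum_{m}(|Q_{v,m}|^{1/t-1/p}\|g\|_{L^{p}(Q_{v,m})})^{r}$ up to constants. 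The $ds/s$-integration over each dyadic annulus produces a bounded factor, and summing over $v$ completes the bound.

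For the reverse direction $\|g\|_{M_{p}^{t,r}} \lesssim [g]$, I would attach to each cube $Q_{v,m}$ a product region in $(y,s)$-space: a central subcube of $Q_{v,m}$ (say, same center and side length $\tfrac{1}{3}\ell(Q_{v,m})$) in the $y$-variable, together with a narrow dyadic $s$-interval $[c_{n}\ell(Q_{v,m}), 2c_{n}\ell(Q_{v,m})]$, where $c_{n}$ is chosen large enough that $B(y,s) \supset Q_{v,m}$ while $|B(y,s)| \approx |Q_{v,m}|$ throughout the region. On this region the integrand of $[g]^{r}$ is bounded below by a constant multiple of $|Q_{v,m}|^{r/t - r/p - 1}\|g\|_{L^{p}(Q_{v,m})}^{r}$; integrating over the region (whose $y$-measure is $\approx |Q_{v,m}|$ and whose $s/ds$-measure is a positive constant) produces the $(v,m)$-th term of $\|g\|_{M_{p}^{t,r}}^{r}$ up to constants. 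The central subcubes at the same level $v$ are pairwise disjoint, and the $s$-intervals across different levels have bounded overlap, so summing the lower bounds gives the desired estimate. The case $r = \infty$ is handled in parallel by replacing $\int dy\, ds/s$ by the essential supremum over $(y,s)$ and applying the same containment and comparability arguments at a single representative $(y,s)$ per cube.

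The main technical obstacle is the bookkeeping in the reverse direction: choosing the constant $c_{n}$ and the central subcube so that the product regions associated with distinct $(v,m)$ have bounded overlap (globally, across all scales). This is standard covering-lemma work and adds only a dimensional constant to the final estimate, so it does not affect the stated equivalence.
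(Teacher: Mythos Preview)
Your approach is correct and essentially the same as the paper's: the paper's proof consists of the single observation that replacing $|\cdot|$ by $\|\cdot\|_X$ in \cite[Theorem 2.9]{ZSTYY23} gives the result, which is exactly your reduction $\|f\|_{M_{p}^{t,r}(X)} = \|\,\|f\|_X\,\|_{M_{p}^{t,r}}$ and $[f]_{M_{p}^{t,r}(X)} = [\,\|f\|_X\,]_{M_{p}^{t,r}}$ followed by the scalar equivalence. Your additional sketch of the scalar argument (dyadic splitting of the radius variable for one direction, and attaching a product region in $(y,s)$-space to each cube for the reverse) is precisely the template of the cited theorem, so you are simply supplying the details the paper leaves to the reference.
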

\begin{proof}
	Repeating  \cite[Theorem 2.9]{ZSTYY23} and replacing  $ | \cdot |$ by $\| \cdot\|_{X}$, we obtain the result. 
\end{proof}

\subsection{$X = \ell^q, q \in (1,\infty)$}
Note that for a $X$-valued function $\vec f$,  the vector Hardy-Littlewood maximal operator $\M \vec f$ is defined by $	\M \vec f := \{ \M f_i \}_{i=1}^\infty.$
The main result of this subsection is the following:
\begin{theorem} \label{M eta r < infty = infty}
	Let $1<q < \infty $.  Let $1<p<t<r<\infty$ or let $1<p \le t <r =\infty$.  Let $ 0<\eta < \min\{p' , q' \}$. Then $\M _\eta$ is bounded on $\mathcal{H}_{p'}^{t',r'} (\ell^{q'})$.
\end{theorem}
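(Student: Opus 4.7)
The plan is to prove the theorem by the classical block-decomposition technique: reduce to a uniform estimate on $\mathcal{M}_\eta$ applied to a single $(p',t',\ell^{q'})$-block, then re-decompose the result as a sum of blocks living on dyadic dilations of the original cube. First I would fix a decomposition $\vec f = \sum_{(j,k)} \lambda_{j,k} \vec b_{j,k}$ with $\|\{\lambda_{j,k}\}\|_{\ell^{r'}} \le (1+\varepsilon)\|\vec f\|_{\mathcal{H}_{p'}^{t',r'}(\ell^{q'})}$. For $\eta \le 1$, pointwise subadditivity $|a+b|^\eta \le |a|^\eta + |b|^\eta$ combined with the sublinearity of $\mathcal{M}$ and Minkowski's inequality in $\ell^{q'/\eta}(i)$ (valid because $q' > \eta$) yields
\[
\|\mathcal{M}_\eta \vec f(x)\|_{\ell^{q'}}^\eta \lesssim \sum_{(j,k)} |\lambda_{j,k}|^\eta \|\mathcal{M}_\eta \vec b_{j,k}(x)\|_{\ell^{q'}}^\eta;
\]
for $\eta > 1$, the analogous step uses the quasi-subadditivity $|a+b|^\eta \le 2^{\eta-1}(|a|^\eta+|b|^\eta)$ together with the $\ell^{q'}$-lattice property of $\mathcal{H}_{p'}^{t',r'}(\ell^{q'})$ from Subsection~\ref{lattice space}. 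In either case, that lattice property and the triangle inequality of Remark~\ref{tri-angle} reduce the theorem to a uniform estimate $\|\mathcal{M}_\eta \vec b\|_{\mathcal{H}_{p'}^{t',r'}(\ell^{q'})} \lesssim 1$ for an arbitrary $(p',t',\ell^{q'})$-block $\vec b$ supported on a dyadic cube $Q$.

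For such a block I would split $\mathbb{R}^n = 2Q \cup \bigsqcup_{m \ge 1} A_m$ with $A_m := 2^{m+1}Q \setminus 2^m Q$, and correspondingly write $\mathcal{M}_\eta \vec b = \mathcal{M}_\eta \vec b \cdot \chi_{2Q} + \sum_{m=1}^\infty \mathcal{M}_\eta \vec b \cdot \chi_{A_m}$. The hypothesis $\eta < \min\{p',q'\}$ is exactly $p'/\eta, q'/\eta > 1$, which is precisely the range in which the Fefferman--Stein vector-valued maximal inequality is available (or equivalently, via the power trick $F_i := |f_i|^\eta$, in which $\mathcal{M}$ is bounded on $L^{p'/\eta}(\ell^{q'/\eta})$). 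Consequently
\[
\|\mathcal{M}_\eta \vec b \cdot \chi_{2Q}\|_{L^{p'}(\ell^{q'})} \lesssim \|\vec b\|_{L^{p'}(\ell^{q'})} \le |Q|^{1/t-1/p},
\]
and Lemma~\ref{generate block}, after covering $2Q$ by $O(1)$ same-scale dyadic cubes, realises the central piece as a constant multiple of a sum of $(p',t',\ell^{q'})$-blocks on cubes of scale $\ell(Q)$.

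For each annulus $A_m$, the observation that any ball $B \ni x \in A_m$ meeting $\operatorname{supp}(b_i) \subset Q$ has $|B| \gtrsim 2^{mn}|Q|$ gives the componentwise tail bound $\mathcal{M}_\eta b_i(x) \lesssim 2^{-mn/\eta}\bigl(|Q|^{-1}\int_Q |b_i|^\eta\bigr)^{1/\eta}$. Taking the $\ell^{q'}$-norm in $i$, applying Minkowski's integral inequality (justified by $q'/\eta \ge 1$), and then Hölder in $L^\eta/L^{p'}$ (justified by $p'/\eta \ge 1$), I arrive at $\|\mathcal{M}_\eta \vec b(x)\|_{\ell^{q'}} \lesssim 2^{-mn/\eta}|Q|^{1/t-1}$ uniformly on $A_m$. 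Integrating over $A_m$ and applying Lemma~\ref{generate block} on the cube $2^{m+1}Q$ writes $\mathcal{M}_\eta \vec b \cdot \chi_{A_m}$ as $\mu_m$ times a $(p',t',\ell^{q'})$-block, with $\mu_m \lesssim 2^{-mn(1/\eta + 1/t - 1)}$ after using $1/p + 1/p' = 1$. Summing against the coefficients from Step~1 in $\ell^{r'}$ then closes the estimate.

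The main obstacle is two-fold. First, making the reduction in Step~1 fully rigorous is delicate because $\mathcal{M}_\eta$ is only literally sublinear for $\eta = 1$ and only $\eta$-subadditive for $\eta \le 1$; the essential use of the $\ell^{q'}$-lattice structure is what confines the argument to $X = \ell^q$ rather than a general Banach space $X$, which is exactly why this theorem is stated only for the sequence-space case. Second, verifying the $\ell^{r'}$-summability of the coefficients $(\mu_m)$ comes down to the inequality $1/\eta + 1/t > 1$, so a careful accounting of the interplay among $\eta$, $p$, $t$ and $r$ is needed; the central Fefferman--Stein step and the annular tail estimate together explain the precise role of the hypothesis $\eta < \min\{p',q'\}$.
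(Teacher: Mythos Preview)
Your reduction in Step~1 has a genuine gap when $r<\infty$. The triangle inequality together with the lattice property only yields
\[
\|\mathcal{M}_\eta \vec f\|_{\mathcal{H}_{p'}^{t',r'}(\ell^{q'})}
\;\le\; \sum_{(j,k)} |\lambda_{j,k}|\,\|\mathcal{M}_\eta \vec b_{j,k}\|_{\mathcal{H}_{p'}^{t',r'}(\ell^{q'})}
\;\lesssim\; \|\{\lambda_{j,k}\}\|_{\ell^{1}},
\]
whereas the block-space norm only controls $\|\{\lambda_{j,k}\}\|_{\ell^{r'}}$. For $r'>1$ these are not comparable, so the uniform single-block bound does \emph{not} close the argument. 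Phrasing your endgame as ``summing against the coefficients in $\ell^{r'}$'' hides exactly this point: the pieces $\vec c_{j,k,m}$ you produce on the dilates $2^{m+1}Q_{j,k}$ are not indexed one-per-dyadic-cube; for fixed $m$ about $2^{mn}$ distinct $k$'s share the same target cube, and combining them reintroduces a loss that cannot be absorbed. This is precisely why the paper treats $r=\infty$ (where $r'=1$ and your reduction is valid) as already known from the Sawano--Tanaka argument, and introduces the slice-space machinery (Definition~\ref{def vec skice}, Lemmas~\ref{char vec slice} and~\ref{char block}) specifically to handle $r<\infty$: grouping $\vec f=\sum_{j}\vec f_j$ by scale and invoking the boundedness of $\mathcal{M}_\eta$ on each $(\mathcal E_{p'}^{t',r'})_j(\ell^{q'})$ preserves the $\ell^{r'}$-structure over $j$, which is exactly what your block-by-block decomposition destroys.

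There is a second, independent problem with your annular estimate. Your coefficient is $\mu_m\lesssim 2^{-mn(1/\eta+1/t-1)}$, and summability in any $\ell^{r'}$ requires $1/\eta+1/t>1$, i.e.\ $\eta<t'$. But since $p<t$ forces $t'<p'$, the hypothesis $\eta<\min\{p',q'\}$ allows $\eta\in[t',p')$, where your $\mu_m$ actually grow. So even the single-block bound $\|\mathcal{M}_\eta\vec b\|_{\mathcal{H}}\lesssim 1$ is not established by your decomposition for the full range of $\eta$. The paper's route via Lemma~\ref{HL each j} (Fefferman--Stein on the slice space at a fixed scale) avoids both obstacles simultaneously.
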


In \cite[Theorem 4.3]{ST09}, Sawano and Tanaka proved the Hardy-Littlewood  maximal operator $\M$ on vector valued on block spaces $ \mathcal{H}_{p'}^{t',1} (\ell^{q'})$; see also \cite[Theorem 2.12]{IST15}. Their method also works for $\M _\eta $ with $1< \eta < \min\{ p' ,q' \}$. Hence, we only need to prove the case $1<p<t<r<\infty$.  We use the idea from \cite[Section 6.2 Besov-Bourgain-Morrey spaces]{ZYY242}. Before proving  Theorem \ref{M eta r < infty = infty}, we give some preparation.

\begin{remark} 
	Let $1<q\le \infty $. Let $1<p<t<r<\infty$ or $  1 <p \le t <r =\infty$. From \cite[Theorem 4.3]{HNSH23}, we know that  the vector  Hardy-Littlewood maximal  operator $\M$ is bounded on  $ M_{p}^{t,r} (\ell^q)$. By Lemma \ref{norm equ}, we obtain
	\begin{equation*}
		[ \M  \vec f]_{ M_{p}^{t,r} (\ell^q) } \lesssim [ \vec f]_{ M_{p}^{t,r} (\ell^q) }.
	\end{equation*}		
\end{remark}
\begin{lemma}\label{HL each j}
	Let $1<q\le \infty $. Let $1<p<t<r<\infty$ or $  1 <p \le t <r =\infty$. Then for each $v \in \mathbb Z $,
	\begin{align*}
		& \left(  \sum_{m \in \mathbb Z^n} \left(  |Q_{v,m}|^{1/t-1/p}\Big(\int_{Q_{v,m}} \left( \sum_{i=1}^\infty \M f_i (x) ^q \right)^{p/q}\d x\Big)^{1/p}  \right) ^r \right) ^{1/r} \\
		& \lesssim	\left(  \sum_{m \in \mathbb Z^n} \left(  |Q_{v,m}|^{1/t-1/p}\Big(\int_{Q_{v,m}} \left( \sum_{i=1}^\infty |f_i (x)| ^q \right)^{p/q}\d x\Big)^{1/p}  \right) ^r \right) ^{1/r} .
	\end{align*}
\end{lemma}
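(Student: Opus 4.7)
The plan is to localize $\M f_i$ at scale $v$ by means of the decomposition $f_i = f_i \chi_{3 Q_{v,m}} + f_i \chi_{(3 Q_{v,m})^c}$ for each $m \in \mathbb Z^n$. Set $F(x) := (\sum_{i=1}^\infty |f_i(x)|^q)^{1/q}$. Using the sublinearity of $\M$ and Minkowski's inequality in $\ell^q$, I would bound
\[
\Bigl(\sum_{i=1}^\infty \M f_i(x)^q\Bigr)^{1/q} \le L_m(x) + N_m(x), \qquad x \in Q_{v,m},
\]
where $L_m$ and $N_m$ encode the local and nonlocal contributions to the vector maximal function. Minkowski's inequality in $\ell^r_m$ then splits the left-hand side of the lemma into two sums, handled independently.

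For the local part, I would invoke the Fefferman--Stein vector-valued maximal inequality on $L^p(\ell^q)$ to obtain $\|L_m\|_{L^p(Q_{v,m})} \lesssim \|F\|_{L^p(3 Q_{v,m})}$. Multiplying by $|Q_{v,m}|^{1/t-1/p}$, taking the $\ell^r_m$ norm, and exploiting the bounded overlap of the family $\{3 Q_{v,m}\}_m$ (each point lies in at most $3^n$ such cubes) via Minkowski then controls the local contribution by a constant times the right-hand side of the lemma.

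For the nonlocal part, for $x \in Q_{v,m}$ every ball $B \ni x$ that meets $(3 Q_{v,m})^c$ has sidelength comparable to or larger than $\ell(Q_{v,m})$; a dyadic decomposition of such balls together with Minkowski for $\ell^q$-integrals yields the pointwise bound
\[
N_m(x) \lesssim \sup_{k \ge 0} \frac{1}{|C 2^k Q_{v,m}|} \int_{C 2^k Q_{v,m}} F(y)\,\d y
\]
for an absolute constant $C$. H\"older's inequality, combined with the identity $|Q_{v,m}|^{1/t}/|C 2^k Q_{v,m}|^{1/p} \approx 2^{-kn/t} |C 2^k Q_{v,m}|^{1/t-1/p}$, gives
\[
|Q_{v,m}|^{1/t-1/p} \|N_m\|_{L^p(Q_{v,m})} \lesssim \sup_{k \ge 0} 2^{-kn/t}\, |C 2^k Q_{v,m}|^{1/t-1/p} \|F\|_{L^p(C 2^k Q_{v,m})}.
\]

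The main obstacle is then to take the $\ell^r_m$ norm of this nonlocal expression and close the estimate back into the scale-$v$ right-hand side. My plan is to majorize the supremum by its $\ell^r_k$ counterpart, cover each $C 2^k Q_{v,m}$ by a bounded number of dyadic cubes at scale $v-k$, and then use a change-of-index argument based on the fact that each scale-$(v-k)$ dyadic cube is met by at most $O(2^{kn})$ values of $m$. This balances a multiplicity factor $2^{kn}$ against the decay $2^{-knr/t}$; the condition $r > t$ makes $\sum_{k \ge 0} 2^{kn(1-r/t)}$ convergent, which is precisely what is needed to absorb the tail. The bookkeeping is technical but standard and mirrors the treatment of scalar Besov--Bourgain--Morrey spaces, the vector-valued extension being essentially automatic once the $\ell^q$-Minkowski step is in place.
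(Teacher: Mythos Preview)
Your local part is fine, and the pointwise bound
\[
N_m(x)\;\lesssim\;\sup_{k\ge 0}\frac{1}{|C2^kQ_{v,m}|}\int_{C2^kQ_{v,m}}F(y)\,\d y,\qquad x\in Q_{v,m},
\]
is correct. The gap is in the bookkeeping you propose afterwards. After majorising the supremum by the $\ell^r_k$-sum, covering $C2^kQ_{v,m}$ by $O(1)$ cubes $Q'\in\D_{v-k}$, and using the $O(2^{kn})$ multiplicity in $m$, what you actually obtain is
\[
\sum_{m}\bigl(\cdots\bigr)^r\;\lesssim\;\sum_{k\ge 0}2^{kn(1-r/t)}\sum_{Q'\in\D_{v-k}}|Q'|^{r(1/t-1/p)}\|F\|_{L^p(Q')}^{r}.
\]
You then implicitly identify $\sum_{Q'\in\D_{v-k}}|Q'|^{r(1/t-1/p)}\|F\|_{L^p(Q')}^r$ with the scale-$v$ right-hand side, and this is where the argument breaks. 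That sum is the slice norm at scale $v-k$, not at scale $v$; converting back (via H\"older on the $2^{kn}$ subcubes of each $Q'$) costs exactly $2^{kn(r/t-1)}$, which cancels your decay and leaves $\sum_{k\ge 0}1=\infty$. One sees this is sharp by taking $F$ constant on a very large cube. In the genuine Besov--Bourgain--Morrey argument you allude to, the scale-$(v-k)$ contribution is a legitimate piece of the \emph{full} norm $\sum_{j}\sum_{m}$, so one never has to return to scale $v$; here both sides live at the single scale $v$, so that mechanism is unavailable.

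The fix is short: notice that at the fixed level $v$ the factor $|Q_{v,m}|^{1/t-1/p}$ is a constant in $m$, so the parameter $t$ plays no role and the lemma is exactly the $\ell^r(L^p)$ amalgam (slice) boundedness of the Fefferman--Stein operator. Concretely, with $b_m:=|Q_{v,m}|^{1/t-1/p}\|F\|_{L^p(Q_{v,m})}$ your own intermediate bound reads
\[
|Q_{v,m}|^{1/t-1/p}\|N_m\|_{L^p(Q_{v,m})}\;\lesssim\;\Bigl(\sup_{k\ge 0}2^{-kn}\sum_{|m'-m|_\infty\le C2^k}b_{m'}^{\,p}\Bigr)^{1/p}\;=\;\bigl(\M_{\mathrm d}(b^p)(m)\bigr)^{1/p},
\]
where $\M_{\mathrm d}$ is the discrete Hardy--Littlewood maximal operator on $\mathbb Z^n$. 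Since $r>p$, $\M_{\mathrm d}$ is bounded on $\ell^{r/p}(\mathbb Z^n)$, and the nonlocal contribution is controlled by $\|b\|_{\ell^r}$ as required. The relevant hypothesis is thus $r>p$ (equivalently $r>1$ for the sequence $(c_m)$ of averages), not $r>t$. The paper itself omits the details and refers to the proof of \cite[Theorem~4.3]{HNSH23}, whose single-scale core is precisely this amalgam/discrete-maximal argument.
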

\begin{proof}
	Repeating  the proof of  \cite[Theorem 4.3]{HNSH23}, the result is obtained and we omit the detail here.
\end{proof}
\begin{remark}
	Let $1<q\le \infty $. Let $1<p<t<r<\infty$ or $  1 <p \le t <r =\infty$.
	By Lemmas \ref{norm equ} and \ref{HL each j}, we have that for each $j\in\mathbb Z$,
	\begin{align*}
		& 	\left( \int_\rn \left( |B(y,2^{-j})| ^{1/t -1/p -1/r} \int_{B(y, 2^{-j})  } \left(  \sum_{i=1}^\infty \M f_i (x) ^q \right)^{p/q} \d x  \right) ^{r} \d y \right) ^{1/r}  \\
		& \lesssim 	\left( \int_\rn \left( |B(y,2^{-j})| ^{1/t -1/p -1/r} \int_{B(y, 2^{-j})  } \left(  \sum_{i=1}^\infty | f_i (x)| ^q \right)^{p/q} \d x  \right) ^{r} \d y \right) ^{1/r}.
	\end{align*}
\end{remark}

\begin{definition}\label{def vec skice}
	Let $1<q<\infty$.
	Let $p,t,r \in (1,\infty) $ and $j \in \mathbb Z$. The slice space $  (\mathcal E_{p'}^{t',r'}   )_j  (\ell^{q'}) $ is define to be the set of all  $\vec f \in L_{\operatorname{loc} } ^{p'} (\ell^{q'})$  such that 
	\begin{equation*}
		\|\vec  f\|_{  (\mathcal E_{p'}^{t',r'}   )_j  (\ell^{q'} ) } := \left( \sum_{k\in \mathbb Z^n } \left( |Q_{j,k}|^{1/t' -1/p' }   \| \chi_{ Q_{j,k} }\vec  f\|_{ L^{p'} (\ell^{q'}) }   \right) ^{r'}  \right) ^{1/r'} <\infty.
	\end{equation*}
\end{definition}

In \cite[Proposition 8.3]{AP17}, Auscher and Prisuelos-Arribas showed the boundedness of Hardy-Littlewood on scalar slice space $(\mathcal E_{p'}^{t',r'}   )_j$. 
From Lemma \ref{HL each j},  we know that $\M$ is bounded on   $  (\mathcal E_{p'}^{t',r'}   )_j  (\ell^{q'}) $.

In the following lemma, we establish a characterization of vector valued slice space $(\mathcal E_{p'}^{t',r'}   )_j (\ell^{q'})$.

\begin{lemma} \label{char vec slice}
	Let $1<q< \infty $.  Let $1<p<t<r<\infty$.  
	Let $j \in \mathbb Z$. Then $\vec f \in (\mathcal E_{p'}^{t',r'}   )_j (\ell^{q'}) $ if and only if there exist a sequence $  \{ \lambda_{j,k}\}_{k\in \mathbb Z^n}  \subset \mathbb R$ satisfying
	\begin{equation*}
		\left(  \sum_{m\in \mathbb Z^n} |\lambda_{j,k}|^{r'} \right)^{1/r'} <\infty
	\end{equation*}
	and a sequence $\{\vec  b_{j,k} \}_{k\in \mathbb Z^n}$ of vector functions on $\rn$ satisfying, for each $k\in \mathbb Z^n$, supp $\vec b_{j,k}\subset Q_{j,k}$  and 
	\begin{equation} \label{b jk = |Q|}
		\left\|  \|b_{j,k}\|_{\ell^{q'}}  \right\|_{ L^{p'} (Q_{j,k}) }  = |Q_{j,k}|^{1/p' - 1/t'}
	\end{equation}
	such that $\vec f = \sum_{k \in \mathbb Z^n} \lambda_{j,k} \vec  b_{j,k} $ almost everywhere on $\rn$; moreover, for such $\vec f$,
	\begin{equation*}
		\| \vec  f\|_{ (\mathcal E_{p'}^{t',r'}   )_j (\ell^{q'}) }  = 	\left(  \sum_{m\in \mathbb Z^n} |\lambda_{j,k}|^{r'} \right)^{1/r'}  .
	\end{equation*}
\end{lemma}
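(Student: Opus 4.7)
The plan is to exploit the crucial fact that, for a single fixed scale $j$, the family $\{Q_{j,k}\}_{k\in\mathbb Z^n}$ is a pairwise disjoint partition of $\mathbb R^n$, so that the block decomposition collapses to a per-cube normalization problem. This makes the characterization considerably softer than the multi-scale decomposition in $\H_{p'}^{t',r'}(\ell^{q'})$, and in fact the two quantities will coincide exactly (no $\lesssim$ needed).

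For the easy (sufficiency) direction, suppose $\vec f = \sum_{k \in \mathbb Z^n} \lambda_{j,k} \vec b_{j,k}$ a.e.\ with the stated support and normalization. Since the $Q_{j,k}$ are disjoint and each $\vec b_{j,k}$ is supported in $Q_{j,k}$, multiplying by $\chi_{Q_{j,k}}$ isolates one term: $\chi_{Q_{j,k}} \vec f = \lambda_{j,k} \vec b_{j,k}$ a.e. Taking $L^{p'}(\ell^{q'})$-norms and using (\ref{b jk = |Q|}) gives $\| \chi_{Q_{j,k}} \vec f\|_{L^{p'}(\ell^{q'})} = |\lambda_{j,k}| \, |Q_{j,k}|^{1/p'-1/t'}$. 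Multiplying by $|Q_{j,k}|^{1/t'-1/p'}$ and taking the $\ell^{r'}$-norm over $k$ yields $\|\vec f\|_{(\mathcal E_{p'}^{t',r'})_j(\ell^{q'})} = (\sum_k |\lambda_{j,k}|^{r'})^{1/r'}$.

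For the necessity direction, given $\vec f \in (\mathcal E_{p'}^{t',r'})_j(\ell^{q'})$, I will simply reverse-engineer the above identity. Define
\[
\lambda_{j,k} := |Q_{j,k}|^{1/t'-1/p'} \|\chi_{Q_{j,k}} \vec f\|_{L^{p'}(\ell^{q'})},
\]
and set $\vec b_{j,k} := \lambda_{j,k}^{-1} \chi_{Q_{j,k}} \vec f$ whenever $\lambda_{j,k} \neq 0$; when $\lambda_{j,k} = 0$ (i.e.\ $\vec f$ vanishes a.e.\ on $Q_{j,k}$), I pick any fixed dummy vector function supported on $Q_{j,k}$ with $\|\|\vec b_{j,k}\|_{\ell^{q'}}\|_{L^{p'}(Q_{j,k})} = |Q_{j,k}|^{1/p'-1/t'}$, for instance $|Q_{j,k}|^{-1/t'} \chi_{Q_{j,k}} \otimes e_1$ where $e_1 = (1,0,0,\dots) \in \ell^{q'}$. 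Then (\ref{b jk = |Q|}) holds by construction, $\lambda_{j,k} \vec b_{j,k} = \chi_{Q_{j,k}} \vec f$ in both cases, and the partition property gives $\vec f = \sum_k \chi_{Q_{j,k}} \vec f = \sum_k \lambda_{j,k} \vec b_{j,k}$ a.e. The summability $(\sum_k |\lambda_{j,k}|^{r'})^{1/r'} = \|\vec f\|_{(\mathcal E_{p'}^{t',r'})_j(\ell^{q'})} < \infty$ is built into the definition.

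There is essentially no obstacle here: the only mildly delicate point is the zero-cube case, which is handled by the dummy block above; everything else is a rewriting of the slice-space norm. Combining the two directions produces both the characterization and the claimed identity of norms.
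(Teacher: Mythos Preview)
Your proposal is correct and follows essentially the same approach as the paper's own proof: both exploit the disjointness of $\{Q_{j,k}\}_{k\in\mathbb Z^n}$ at the fixed scale $j$, define $\lambda_{j,k}$ as the normalized local $L^{p'}(\ell^{q'})$-norm with $\vec b_{j,k}$ the corresponding normalized restriction, and handle the zero-cube case via the identical dummy block $|Q_{j,k}|^{-1/t'}\chi_{Q_{j,k}}\otimes e_1$.
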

\begin{proof}
	We first show the necessity. Let $\vec f \in (\mathcal E_{p'}^{t',r'}   )_j (\ell^{q'}) $. For each $m \in \mathbb Z^n$, when $  \left\|  \|\vec f\|_{\ell^{q'}}  \right\|_{ L^{p'} (Q_{j,k}) } >0 $, let $ \lambda_{j,k} := |Q_{j,k}|^{1/t' -1/p' }   \| \chi_{ Q_{j,k} } \vec f\|_{ L^{p'} (\ell^{q'}) }  $,   $\vec  b_{j,k} := \lambda_{j,k} ^{-1} \vec f \chi_{Q_{j,k} }$ and, when $  \left\|  \|\vec f\|_{\ell^{q'}}  \right\|_{ L^{p'} (Q_{j,k}) } = 0 $, let $ \lambda_{j,k} := 0$ and $\vec  b_{j,k} : = |Q_{j,k}|^{ -1/t'} \chi_{ Q_{j,k} }  \vec a $ where $\vec a = (1, 0, 0, \ldots) \in \ell^{q'} $ with $\|\vec a\|_{ \ell^{q'}} =1$. Then by the Definition \ref{def vec skice}, it is easy to show that supp $\vec b_{j,k}\subset Q_{j,k}$, (\ref{b jk = |Q|}) and 
	\begin{equation*}
		\left(  \sum_{m\in \mathbb Z^n} |\lambda_{j,k}|^{r'} \right)^{1/r'}  = 	\|\vec f\|_{ (\mathcal E_{p'}^{t',r'}   )_j (\ell^{q'}) }  	  <\infty .
	\end{equation*} 	
	Next we show the sufficiency. Assume that there exist a sequence $  \{ \lambda_{j,k}\}_{k\in \mathbb Z^n}  \in \ell^{r'}$ and a sequence $\{\vec  b_{j,k} \}_{k\in \mathbb Z^n}$ satisfying supp $\vec b_{j,k} \subset Q_{j,k}$ and  (\ref{b jk = |Q|}) such that $\vec f =  \sum_{k \in \mathbb Z^n} \lambda_{j,k} \vec b_{j,k}$  almost everywhere on $\rn$. Note that $ \{ Q_{j,k}\}_{k\in \mathbb Z^n}$  are disjoint. Then 
	\begin{align*}
		\|\vec  f\|_{ (\mathcal E_{p'}^{t',r'}   )_j (\ell^{q'}) }   & = \left( \sum_{k\in \mathbb Z^n } \left( |Q_{j,k}|^{1/t' -1/p' }   \| \lambda_{j,k}\vec  b_{j,k}\|_{ L^{p'} (\ell^{q'}) }   \right) ^{r'}  \right) ^{1/r'}  \\
		& = 	\left(  \sum_{m\in \mathbb Z^n} |\lambda_{j,k}|^{r'} \right)^{1/r'}  <\infty.
	\end{align*}
	Hence $\vec f \in (\mathcal E_{p'}^{t',r'}   )_j (\ell^{q'})$. Thus we finish the proof.
\end{proof}
Using the characterization of vector valued slice space $(\mathcal E_{p'}^{t',r'}   )_j (\ell^{q'})$, we obtain the following characterization of block spaces.
\begin{lemma}\label{char block}
	Let $1<q < \infty $.  Let $1<p<t<r<\infty$.  Then $\vec f \in \mathcal{H}_{p'}^{t',r'} (\ell^{q'})$ if and only if 
	\begin{align*}
		[\vec f]_{\mathcal{H}_{p'}^{t',r'} (\ell^{q'})} & : = \inf\left\{ \left( \sum_{j\in \mathbb Z} \|\vec  f_j\|_{ (\mathcal E_{p'}^{t',r'}   )_j (\ell^{q'}) } ^{r'} \right) ^{1/r' }  :  \vec f = \sum_{j\in \mathbb Z} \vec f_j, \vec f_j \in (\mathcal E_{p'}^{t',r'}   )_j (\ell^{q'})  \right\} \\
		& <\infty ;
	\end{align*}
	moreover, for such $\vec f$, $ \|\vec  f\|_{ \mathcal{H}_{p'}^{t',r'} (\ell^{q'})} =  [\vec f]_{\mathcal{H}_{p'}^{t',r'} (\ell^{q'})}$.
\end{lemma}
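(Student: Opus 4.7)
The plan is to prove the two inequalities $\|\vec f\|_{\mathcal{H}_{p'}^{t',r'}(\ell^{q'})} \le [\vec f]_{\mathcal{H}_{p'}^{t',r'}(\ell^{q'})}$ and $[\vec f]_{\mathcal{H}_{p'}^{t',r'}(\ell^{q'})} \le \|\vec f\|_{\mathcal{H}_{p'}^{t',r'}(\ell^{q'})}$ separately, by regrouping block decompositions scale by scale and invoking Lemma \ref{char vec slice} to convert between slice representations and block representations.

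For the direction $[\vec f]_{\mathcal{H}} \lesssim \|\vec f\|_{\mathcal{H}}$, fix $\vec f \in \mathcal{H}_{p'}^{t',r'}(\ell^{q'})$ and $\epsilon>0$. Choose a block decomposition $\vec f = \sum_{(j,k)\in\mathbb Z^{n+1}} \lambda_{j,k} \vec b_{j,k}$ with $\vec b_{j,k}$ a $(p',t',\ell^{q'})$-block supported on $Q_{j,k}$ and $\|\{\lambda_{j,k}\}\|_{\ell^{r'}} \le (1+\epsilon)\|\vec f\|_{\mathcal{H}}$. For each fixed $j\in\mathbb Z$, set $\vec f_j := \sum_{k\in\mathbb Z^n}\lambda_{j,k}\vec b_{j,k}$; since the cubes $\{Q_{j,k}\}_{k\in\mathbb Z^n}$ are pairwise disjoint, this inner sum is unambiguously defined pointwise. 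Using disjointness and the block bound $\|\|\vec b_{j,k}\|_{\ell^{q'}}\|_{L^{p'}} \le |Q_{j,k}|^{1/p'-1/t'}$, I compute directly
\[
\|\vec f_j\|_{(\mathcal E_{p'}^{t',r'})_j(\ell^{q'})}^{r'} = \sum_{k\in\mathbb Z^n}\bigl(|Q_{j,k}|^{1/t'-1/p'}|\lambda_{j,k}|\,\|\chi_{Q_{j,k}}\vec b_{j,k}\|_{L^{p'}(\ell^{q'})}\bigr)^{r'} \le \sum_{k\in\mathbb Z^n}|\lambda_{j,k}|^{r'}.
\]
Summing over $j$ and taking $r'$-th roots gives $\bigl(\sum_j\|\vec f_j\|^{r'}\bigr)^{1/r'} \le (1+\epsilon)\|\vec f\|_{\mathcal{H}}$, and I must verify that $\vec f = \sum_j \vec f_j$ holds (pointwise a.e.\ in $\ell^{q'}$), which follows from the absolute convergence furnished by Proposition \ref{block sum converge} and Fubini-style rearrangement of the absolutely convergent sum $\sum_{(j,k)}|\lambda_{j,k}|\|\vec b_{j,k}(x)\|_{\ell^{q'}}$. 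Letting $\epsilon\downarrow 0$ yields $[\vec f]_{\mathcal{H}} \le \|\vec f\|_{\mathcal{H}}$.

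For the reverse direction, take any admissible decomposition $\vec f = \sum_{j\in\mathbb Z}\vec f_j$ with $\vec f_j \in (\mathcal E_{p'}^{t',r'})_j(\ell^{q'})$. For each $j$, apply Lemma \ref{char vec slice} to write $\vec f_j = \sum_{k\in\mathbb Z^n}\lambda_{j,k}\vec b_{j,k}$, where supp$\,\vec b_{j,k}\subset Q_{j,k}$, the norm equality (\ref{b jk = |Q|}) holds, and $\bigl(\sum_k|\lambda_{j,k}|^{r'}\bigr)^{1/r'} = \|\vec f_j\|_{(\mathcal E_{p'}^{t',r'})_j(\ell^{q'})}$. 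The equality (\ref{b jk = |Q|}) is precisely the block normalization (with equality instead of inequality), so each $\vec b_{j,k}$ is a $(p',t',\ell^{q'})$-block. Concatenating over $j$ gives $\vec f = \sum_{(j,k)\in\mathbb Z^{n+1}}\lambda_{j,k}\vec b_{j,k}$ with
\[
\Bigl(\sum_{(j,k)\in\mathbb Z^{n+1}}|\lambda_{j,k}|^{r'}\Bigr)^{1/r'} = \Bigl(\sum_{j\in\mathbb Z}\|\vec f_j\|_{(\mathcal E_{p'}^{t',r'})_j(\ell^{q'})}^{r'}\Bigr)^{1/r'}.
\]
Proposition \ref{block sum converge} guarantees convergence of the resulting double sum in $\ell^{q'}$ almost everywhere, hence $\vec f \in \mathcal{H}_{p'}^{t',r'}(\ell^{q'})$ with $\|\vec f\|_{\mathcal{H}} \le \bigl(\sum_j\|\vec f_j\|^{r'}\bigr)^{1/r'}$. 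Taking the infimum over admissible decompositions yields $\|\vec f\|_{\mathcal{H}} \le [\vec f]_{\mathcal{H}}$.

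The main obstacle I anticipate is purely bookkeeping: checking that the pointwise-almost-everywhere identification $\vec f = \sum_j \vec f_j$ in the first direction (where the inner sums are sums of blocks at different scales that need not be disjoint) is genuine in $\ell^{q'}$; this is where Proposition \ref{block sum converge}(i) is essential, because it supplies absolute convergence in $\ell^{q'}$ a.e.\ and thereby legitimizes any rearrangement by scales. Apart from that, both directions amount to an exchange between a block representation in $\mathcal{H}_{p'}^{t',r'}(\ell^{q'})$ and a scale-by-scale representation in the slice spaces, mediated by Lemma \ref{char vec slice}.
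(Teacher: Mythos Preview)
Your proposal is correct and follows essentially the same approach as the paper: group an almost-optimal block decomposition by scale $j$ to produce slice functions $\vec f_j$ for one inequality, and conversely apply Lemma~\ref{char vec slice} scale by scale to an almost-optimal slice decomposition to recover a block decomposition for the other. Your explicit appeal to Proposition~\ref{block sum converge} to justify the rearrangement $\vec f=\sum_j\vec f_j$ is a welcome point of care that the paper leaves implicit.
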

\begin{proof}
	We first show the necessity. Let $\vec f \in \mathcal{H}_{p'}^{t',r'} (\ell^{q'})$. Then	there exist a sequence $\lambda=\{\lambda_{j,k}\}_{(j,k)\in\mathbb{Z}^{n+1}}\in\ell^{r'}(\mathbb{Z}^{n+1})$ and a sequence $\{\vec  b_{j,k}  \}_{(j,k)\in\mathbb{Z}^{n+1} }  $  of  $(p',t', \ell^{q'})$-block such that 
	\begin{equation*}
		\vec f=\sum_{(j,k)\in\mathbb{Z}^{n+1}}\lambda_{j,k} \vec b_{j,k}  
	\end{equation*}
	almost everywhere on $\rn$ and $ \left( \sum_{ (j,k)\in\mathbb{Z}^{n+1} } |\lambda_{j,k}|^{r'}\right)^{1/r'}  < (1+\epsilon) \|\vec  f\|_{ \mathcal{H}_{p'}^{t',r'} (\ell^{q'})}$.
	For each $j \in \mathbb Z$, let $\vec f_j : = \sum_{k \in \mathbb Z^n}  \lambda_{j,k} \vec  b_{j,k}  $. Then we obtain
	\begin{align*}
	&	\left(  \sum_{j\in \mathbb Z}	\| \vec f_j \|_{ (\mathcal E_{p'}^{t',r'}   )_j (\ell^{q'}) }  ^{r'}  \right)^{1/r'} \\
	 & = \left(  \sum_{j\in \mathbb Z} \left( \sum_{k\in \mathbb Z^n } \left( |Q_{j,k}|^{1/t' -1/p' }   \| \lambda_{j,k} \vec b_{j,k}\|_{ L^{p'} (\ell^{q'}) }   \right) ^{r'}  \right)^{r'/r'}   \right) ^{1/r'} \\
		& \le 	\left(  \sum_{j\in \mathbb Z} \sum_{m\in \mathbb Z^n} |\lambda_{j,k}|^{r'} \right)^{1/r'} < (1+\epsilon) \| \vec  f\|_{ \mathcal{H}_{p'}^{t',r'} (\ell^{q'})}.
	\end{align*}
	Letting $\epsilon \to 0^+$, we obtain $  [\vec f]_{\mathcal{H}_{p'}^{t',r'} (\ell^{q'})} \le  \|\vec  f\|_{ \mathcal{H}_{p'}^{t',r'} (\ell^{q'})} $.
	
	Next we show the sufficiency. Let $\vec f \in \mathscr M (vec)$ with $ [\vec f]_{\mathcal{H}_{p'}^{t',r'} (\ell^{q'})} <\infty$. Then there exists a sequence $\{\vec f_j \}_{j\in \mathbb Z}$ satisfying that $\vec f_j \in  (\mathcal E_{p'}^{t',r'}   )_j (\ell^{q'})  $ for each $j$ such that $\vec f =\sum_{j\in \mathbb Z} \vec f_j $ almost everywhere on $\rn$ and 
	\begin{equation*}
		\left( \sum_{j\in \mathbb Z} \|\vec  f_j\|_{ (\mathcal E_{p'}^{t',r'}   )_j (\ell^{q'}) } ^{r'} \right) ^{1/r' } < (1+\epsilon) [\vec f]_{\mathcal{H}_{p'}^{t',r'} (\ell^{q'})}.
	\end{equation*}
	By  Lemma \ref{char vec slice}, for each $j\in \mathbb Z$, there exist a sequence 
	$\{ \lambda_{j,k} \}_{ (j,k)\in\mathbb{Z}^{n+1} }  \in \ell^{r'}$  and 
	a sequence $ \{\vec  b_{j,k}  \}_{ (j,k) \in \mathbb{Z}^{n+1} }$  of vector valued satisfying (\ref{b jk = |Q|}) such that $\vec f_j = \sum_{k \in \mathbb Z^n} \lambda_{j,k}\vec  b_{j,k} $ almost everywhere on $\rn$ and 
	\begin{equation*}
		\|\vec  f_j \|_{ (\mathcal E_{p'}^{t',r'}   )_j (\ell^{q'}) }  = 	\left(  \sum_{m\in \mathbb Z^n} |\lambda_{j,k}|^{r'} \right)^{1/r'} .
	\end{equation*}
	Hence, $\vec f = \sum_{j\in \mathbb Z}\vec f_j = \sum_{j\in \mathbb Z}\sum_{k \in \mathbb Z^n} \lambda_{j,k}\vec  b_{j,k}  $ almost everywhere on $\rn$ and 
	\begin{equation*}
		\|\vec  f\|_{ \mathcal{H}_{p'}^{t',r'} (\ell^{q'}) }  \le 	\left(  \sum_{j\in \mathbb Z} \sum_{m\in \mathbb Z^n} |\lambda_{j,k}|^{r'} \right)^{1/r'} < (1+\epsilon) [\vec f]_{\mathcal{H}_{p'}^{t',r'} (\ell^{q'})}.
	\end{equation*}
	Letting $\epsilon \to 0^+$, we obtain $	\| \vec f\|_{ \mathcal{H}_{p'}^{t',r'} (\ell^{q'}) } \le   [\vec f]_{\mathcal{H}_{p'}^{t',r'} (\ell^{q'})} $. This finish the proof of sufficiency. Thus we complete the proof of Lemma \ref{char block}.
\end{proof}
Now we are ready to show  Theorem \ref{M eta r < infty = infty}.
\begin{proof}[Proof of Theorem \ref{M eta r < infty = infty}]
	We may suppose that $ 1\le \eta < \min\{ p' ,q' \}  $ since $\M_{\eta_0}\vec  f \le  \M_{\eta_1} \vec f $ when $ 0< \eta_0 \le \eta_1$. Thus $\M_\eta$  is a sublinear operator.
	Let $0<\epsilon <1$.
	Let $\vec f\in \mathcal{H}_{p'}^{t',r'} (\ell^{q'})$. From Lemma \ref{char block}, there exists a sequence $\{\vec f_j\}_{j\in \mathbb Z} \subset (\mathcal E_{p'}^{t',r'}   )_j (\ell^{q'}) $ such that $\vec f = \sum_{j\in \mathbb Z} \vec f_j$ almost everywhere on $\rn$ and 
	\begin{equation*}
		\| \vec f\|_{\mathcal{H}_{p'}^{t',r'} (\ell^{q'}) } = [\vec f]_{\mathcal{H}_{p'}^{t',r'} (\ell^{q'})} > (1 - \epsilon) \left( \sum_{j\in \mathbb Z} \| \vec f_j\|_{ (\mathcal E_{p'}^{t',r'}   )_j (\ell^{q'}) } ^{r'} \right) ^{1/r' } .
	\end{equation*}
	By the sublinear of Hardy-Littlewood maximal function and Lemma \ref{HL each j}, we obtain
	\begin{align*}
		\| \M_\eta \vec f\|_{\mathcal{H}_{p'}^{t',r'} (\ell^{q'})}  & \le \left\| \sum_{j\in   \mathbb Z}  \M_\eta \vec f _j  \right\|_{ \mathcal{H}_{p'}^{t',r'} (\ell^{q'}) }  = \left[ \sum_{j\in   \mathbb Z}  \M_\eta \vec f _j \right]_{ \mathcal{H}_{p'}^{t',r'} (\ell^{q'})   }  \\
		& \le \left( \sum_{j\in   \mathbb Z} \| \M_\eta \vec f _j  \|_{ (\mathcal E_{p'}^{t',r'}   )_j (\ell^{q'}) } ^{r'} \right)^{1/r'}  \lesssim  \left( \sum_{j\in   \mathbb Z} \|  \vec f _j  \|_{ (\mathcal E_{p'}^{t',r'}   )_j (\ell^{q'}) } ^{r'} \right)^{1/r'}  \\
		& < \frac{1}{1-\epsilon} 	\| \vec f\|_{\mathcal{H}_{p'}^{t',r'} (\ell^{q'}) }
	\end{align*}
	where the implicit positive constants are independent of both $\vec f$ and $\epsilon$. Letting $\epsilon \to 0^+$, we obtain $	\| \M_\eta \vec  f\|_{\mathcal{H}_{p'}^{t',r'} (\ell^{q'})} \lesssim  	\| \vec f\|_{\mathcal{H}_{p'}^{t',r'} (\ell^{q'}) } $ and hence complete the proof.
\end{proof}

\subsection*{Acknowledgment}
The corresponding author
Jingshi Xu is supported by the National Natural Science Foundation of China (Grant No. 12161022) and the Science and Technology Project of Guangxi (Guike AD23023002).
Pengfei Guo is supported by Hainan Provincial Natural Science Foundation of China (Grant No. 122RC652).


\begin{thebibliography}{1}
\bibitem{AS23}
Abe, Y., Sawano, Y.:
 Littlewood-{Paley} characterization of discrete {Morrey} spaces and
	its application to the discrete martingale transform.
Matematiche. 
\textbf{78}(2), 337--358 (2023)


\bibitem{Ada15}
Adams,  D.R.:
 Morrey spaces.
 Appl. Numer. Harmon. Anal. 
 Birkh{\"a}user/Springer,
 Cham (2015)

\bibitem{AP17}
Auscher, P., Prisuelos-Arribas, C.:
 Tent space boundedness via extrapolation.
 Math. Z., 
 \textbf{286}(3-4), 1575--1604 (2017)

\bibitem{BX24}
Bai, T., Xu, J.:  
 Precompactness in matrix weighted {Bourgain}-{Morrey} spaces.
Filomat,
to appear

\bibitem{BX25}
Bai, T., Xu, J.:  
 Weighted {Bourgain}-{Morrey}-{Besov}-{Triebel}-{Lizorkin} spaces
associated with operators.
 Math. Nachr.,
 \textbf{298}(3), 886--924, (2025)

\bibitem{Bou91}
Bourgain, J.:
 On the restriction and multiplier problems in {{\(R^ 3\)}}.
 In {Geometric aspects of functional analysis. Proceedings of the
	Israel seminar (GAFA) 1989-90},
 Springer-Verlag, Berlin
(1991)

\bibitem{B11}
Brezis, H.:
 Functional analysis, {Sobolev} spaces and partial differential
	equations.
 Universitext.   Springer, New York, (2011)

\bibitem{CH14}
Cheung, K.L.,  Ho, K.P.:
 Boundedness of {Hardy}-{Littlewood} maximal operator on block spaces
with variable exponent.
 Czech. Math. J.,
 \textbf{64}(1), 159--171, (2014)

\bibitem{DDDN24}
Dung, T.T., Dao, N.A., Duong, X.T., Nghia, L.T.:
 Commutators on spaces of homogeneous type in generalized block
spaces.
 J. Geom. Anal., 
 \textbf{34}(7), 39, (2024)


\bibitem{H18}
Hakim, D.I.:
 Complex interpolation of predual spaces of general local
{Morrey}-type spaces.
 Banach J. Math. Anal., 
 \textbf{12}(3), 541--571, (2018)

\bibitem{HNSH23}
Hatano, N., Nogayama, T., Sawano, Y., Hakim, D.I.:
Bourgain-{Morrey} spaces and their applications to boundedness of
operators.
 J. Funct. Anal., 
\textbf{284}(1), 52, (2023)


\bibitem{H20}
Ho, K.-P.:
Sublinear operators on block-type spaces.
Sci. China, Math.,
\textbf{63}(6), 1107--1124, (2020)


\bibitem{H23}
Ho, K.-P.:
Grand {Morrey} spaces and grand {Hardy}-{Morrey} spaces on {Euclidean} space.
J. Geom. Anal.,
\textbf{33}(6), 23, (2023)


\bibitem{HLY23}
Hu, P., Li, Y., Yang, D.:
Bourgain-{Morrey} spaces meet structure of {Triebel}-{Lizorkin}
spaces.
Math. Z.,
\textbf{304}(1), 49, (2023)


\bibitem{HVVW16}
Hyt{\"o}nen, T., Neerven, J.V., Veraar, M., Weis, L.:
 Analysis in {Banach} spaces. {Volume} {I}. {Martingales} and
	{Littlewood}-{Paley} theory, volume~63 of {Ergeb. Math. Grenzgeb., 3.
	Folge}.
 Springer, Cham (2016)

\bibitem{IST15}
Izumi, T., Sawano, Y., Tanaka, H.:
Littlewood-{Paley} theory for {Morrey} spaces and their preduals.
Rev. Mat. Complut., 
\textbf{28}(2), 411--447, (2015)

\bibitem{M16}
Masaki, S.:
 Two minimization problems on non-scattering solutions to
mass-subcritical nonlinear Schr\"odinger  equation.
arXiv:1605.09234,
 (2016)


\bibitem{MS18}
Masaki, S., Segata, J.I.:
Existence of a minimal non-scattering solution to the
mass-subcritical generalized {Korteweg}-de {Vries} equation.
 Ann. Inst. Henri Poincar{\'e}, Anal. Non Lin{\'e}aire,
 \textbf{35}(2), 
 283--326, (2018)

\bibitem{N24}
Nogayama, T.:
Littlewood-{Paley} and wavelet characterization for mixed {Morrey}
spaces.
 Math. Nachr.,
 \textbf{297}(6), 2198--2233, (2024)

\bibitem{RS72}
Reed, M., Simon, B.:
 Methods of modern mathematical physics. 1: {Functional} analysis.: {Academic} {Press}, {Inc}, London
(1972)

\bibitem{RS16}
Rosenthal, M., Schmeisser, H.J.:
 On the boundedness of singular integrals in {Morrey} spaces and its
preduals.
 J. Fourier Anal. Appl.,
 \textbf{22}(2), 462--490, (2016)

\bibitem{SFH20}
Sawano, Y., Di~Fazio, G., Hakim, D.I.:
 Morrey spaces. {Introduction} and applications to integral
	operators and {PDE}'s. {Volume} {I}.
 Monogr. Res. Notes Math.  CRC Press, Boca Raton, FL (2020)

\bibitem{ST09}
Sawano, Y., Tanaka, H.:
Predual spaces of {Morrey} spaces with non-doubling measures.
 Tokyo J. Math., 
 \textbf{32}(2), 471--486, (2009)

\bibitem{ST15}
Sawano, Y., Tanaka, H.:
 The {Fatou} property of block spaces.
 J. Math. Sci., Tokyo,
 \textbf{22}(3), 663--683, (2015)

\bibitem{W23}
Wei, M.:
 Mapping properties for operators on {Herz}-{Morrey} and
{Herz}-{Morrey}-{Hardy} spaces.
J. Pseudo-Differ. Oper. Appl.,
\textbf{14}(1), 27, (2023)


\bibitem{W24}
Wei, M.:
 Extrapolation for weighted product {Morrey} spaces and some
applications.
 Potential Anal.,
 \textbf{60}(1), 445--472, (2024)

\bibitem{ZYZ24}
Zhang, Y., Yang, D., Zhao, Y.:
Grand {Besov}-{Bourgain}-{Morrey} spaces and their applications to
boundedness of operators.
 Anal. Math. Phys., 
 \textbf{14}(4), 58, (2024)


\bibitem{ZSTYY23}
Zhao, Y., Sawano, Y., Tao, J., Yang, D., Yuan, W.:
 Bourgain-{Morrey} spaces mixed with structure of {Besov} spaces.
Proc. Steklov Inst. Math.,
\textbf{323}, 244--295, (2023)

\bibitem{ZYY242}
Zhu, C., Yang, D., Yuan, W.:
 Bourgain-{Brezis}-{Mironescu}-type characterization of inhomogeneous
ball {Banach} {Sobolev} spaces on extension domains.
 J. Geom. Anal.,
 \textbf{34}(10), 70, (2024)

\bibitem{Z86}
Zorko, C.T.:
 Morrey space.
 Proc. Am. Math. Soc.,
 \textbf{98}, 586--592, (1986)
\end{thebibliography}
\end{document}